\documentclass[12pt]{article}
\usepackage[utf8]{inputenc}
\usepackage{hyperref}
\bibliographystyle{unsrt}
\usepackage[a4paper,left=3cm,right=3cm,top=2.5cm,bottom=2.5cm]{geometry}
\usepackage{fancyhdr}
\usepackage{setspace}
\usepackage{amsmath,amssymb}
\usepackage{mathtools}
\usepackage{amsfonts}
\usepackage{amsthm}
\usepackage{framed}
\usepackage[all]{xy}
\newtheorem{theorem}{Theorem}[subsection]
\newtheorem{definition}{Definition}[subsection]
\newtheorem{corollary}{Corollary}[subsection]

\title{\textbf{\Large Generalized Karush-Kuhn-Tucker Conditions in Variational and Set-Valued Analysis}}
\author{\textbf{Zhuoyu Xiao\footnote{Department of Mathematics, Jinan University, China. Email: zyxiao@stu2016.jnu.edu.cn}}}
\date{}

\begin{document}
\maketitle
\pagenumbering{roman}

\begin{spacing}{1.2}
\pagestyle{plain}
{\normalsize 
\begin{center}
    \emph{In memory of Jonathan M.Borwein (1951-2016)}
    ~\par \emph{His prodigious contribution changed traditional optimization}
    ~\par \emph{His death is a loss to all those who treasure mathematics}
\end{center}
}

~\par 
\begin{flushleft}
    \textbf{\Large Abstract}
\end{flushleft}
{\normalsize This expository paper contains a concise introduction to some significant works concerning the Karush-Kuhn-Tucker condition, a necessary condition for a solution in local optimality in problems with equality and inequality constraints. The study of this optimality condition has a long history and culminated in the appearance of subdifferentials. The 1970s and early 1980s were important periods for new developments and various generalizations of subdifferentials were introduced, including the Clarke subdifferential and Demyanov-Rubinov quasidifferential. 
\par In this paper, we mainly present four generalized Karush-Kuhn-Tucker conditions or Fritz John conditions in variational analysis and set-valued analysis via Lagrange multiplier methods besides Fr$\Acute{e}$chet differentiable situation, namely subdifferentials of convex functions, generalized gradients of locally Lipschitz functions, quasidifferentials of quasidifferentiable functions and contingent epiderivatives of set-valued maps and discuss the limits of Lagrangian methods slightly in the last chapter. These results represent remarkable developments in the theory of generalized differentiation. The purpose of this paper is to use Karush-Kuhn-Tucker condition as a guide to provide our readers with some advanced topics in modern nonlinear analysis.}

~\par ~\par ~\par
\pagenumbering{arabic}
{\normalsize
\section{Introduction}
\subsection{Some Comments about KKT Condition}
Nonsmooth optimization is among the most difficult tasks in optimization. It
deals with optimization problems that objective and constraint functions are nonsmooth functions. We mainly discuss the following optimization problem from \textsection 3 to \textsection 6 in this paper:
\begin{equation*}
\begin{aligned}
\min \quad & f(x)\\
\textrm{s.t.} \quad & g_{i}(x)\leq 0,\quad (i=1,\dots,m) \\
                    & h_{j}(x) = 0,\quad (j=1,\dots,n)    \\
\end{aligned}
\tag{P1}
\end{equation*}
We follow the same terms like feasible solution, constraint function and optimal solution as other textbooks and work with $\mathbb{R}^n$ space unless otherwise mentioned. In general, there are two different viewpoints of the above problem. One is duality, but this is not our main discussion in this paper. The other viewpoint is optimality conditions including geometric form and Lagrange multiplier type.
\par The development of Lagrange multiplier has a long history. In 1797, Lagrange published his famous multiplier rule \cite{lagrange}, which turned out to be an essential tool in constrained optimization. He applied this principle to infinite dimensional problems in the calculus of variations and then he extended it to finite dimensional optimization problems. It is well known that the Karush-Kuhn-Tucker condition in finite dimensional optimization can be deduced from a general multiplier rule and connect the theories of nonsmooth analysis and optimization.

\subsection{Arrangement of This Paper}
I have tried my best to write this article in a self-contained way. Although in practice we expect a certain mathematical maturity, in principle we assume only knowledge of elementary functional 
analysis. The readers who aren't familiar with functional analysis may refer to \cite{folland2013real} or \cite{rudin1991functional} \nocite{bertsekas2014constrained} \nocite{rockafellar1993lagrange}.
\par Some preliminary knowledge will be introduced in \textsection2. These elementary definitions and theorems will be presented directly, the readers who are not familiar with these materials may refer to \cite{fukushima2011fundamentals}, \cite{hiriart2012fundamentals}, \cite{mordukhovich2013easy}, \cite{clarke2008nonsmooth} and \cite{jahn2010vector}. In \textsection3, we present classical Karush-Kuhn-Tucker condition by variational geometry method under the assumption of Fr$\Acute{e}$chet differentiability. Meanwhile, we will briefly state the relationship between Karush-Kuhn-Tucker condition and Fritz John condition and mention constraint qualifications at the end of this section. Main results of this part are refer to Masao Fukushima's book \cite{fukushima2011fundamentals}.
\par Starting from \textsection4, we turn our attention to the theory of generalized differentiation. In Rockafellar's important  work \cite{rockafellar1970convex}, subdiffential was introduced and deduced necessary condition of convex programming. Next, \textsection5 is devoted to Clarke's work \cite{clarke1990optimization}. In this section, Ekeland variational principle will be presented and will be used in Clarke’s proof of Fritz John condition in Lipschitz optimization. \textsection6 we discuss a totally different class of nonsmooth functions called quasidifferentiable functions whose optimality conditions can be described by subdifferentials and superdifferentials, which is different from the previous types of functions. This part of significant results belongs to Luderer's paper \cite{luderer1991directional}.
\par Although this paper contains no new result, lots of the main theorems and proofs have been simplified, modified and well organized from the original papers and textbooks. It is worth noting that in \textsection7, we discuss an analogous necessary optimality condition characterized by contingent epiderivatives in set-valued optimization, which refer to G{\"o}tz and Johannes's work \cite{gotz2000lagrange}. Although the proof of this key result is a little lengthy, set-valued optimization is a vibrant and promising branch of modern nonlinear analysis. We refer the readers who are interested in set-valued optimization to \cite{jahn2010vector} and \cite{khan2016set} for more details.
\par In \textsection8, we discuss the limits of Lagrangian methods by introducing two pathological examples(the latter one was constructed by the author himself), that is, when the Lagrange multiplier fails. Then we give a more precise claim of classical Fritz John condition, with the necessary assumption of continuity in a neighborhood of the optimal solution. This section refers to Luis A.Fernandez's paper \cite{fernandez1997}.

\section{Some Preliminaries}
\subsection{Functions and Derivatives}
\begin{center}
    \textbf{Lower Semicontinuous Functions}
\end{center}

\begin{definition}
A function $f:\mathbb{R}^n \mapsto \left( -\infty,+\infty \right]$ is lower semicontinuous at $x$ provided that
\begin{equation*}
    \liminf_{x'\to x} f(x') \geq f(x).
\end{equation*}
\end{definition}
\begin{framed}
\noindent \emph{Remark:} This condition is clearly equivalent to saying that for all $\varepsilon > 0$, there exists $\delta > 0$ so that $y \in B(x;\delta)$ implies $f(y) \geq f(x) - \varepsilon$, where as usual, $\infty - r$ is interpreted as $\infty$ when $r \in \mathbb{R}$.
\end{framed}

\begin{definition}
The set defined by a real-valued function $f$ and a real number $\alpha$ as follows:
\begin{equation*}
    S_{f}(\alpha) = \left\{ x\in \mathbb{R}^{n}\:\vert\:f(x) \leq \alpha \right\}
\end{equation*}
is called level set of the function $f$.
\end{definition}

\begin{definition}
The epigraph of  $f:\mathbb{R}^n \mapsto \left( -\infty,+\infty \right]$ is defined by
\begin{equation*}
    epi\,f = \left\{ (x,y) \in \mathbb{R}^n \times \mathbb{R}\:\vert\:x\in dom\,f, y \geq f(x) \right\}.
\end{equation*}
\end{definition}

\begin{theorem}
The following three statements are equivalent:
\begin{itemize}
    \item The function $f:\mathbb{R}^n \mapsto \left( -\infty,+\infty \right]$ is lower semicontinuous.
    \item The level set $S_{f}(\alpha)$ of the function $f$ is a closed set.
    \item The epigraph $epi f$ of the function $f$ is a closed set. 
\end{itemize}
\end{theorem}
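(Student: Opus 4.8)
The plan is to establish the three equivalences cyclically, proving $(1) \Rightarrow (3) \Rightarrow (2) \Rightarrow (1)$, which is more economical than verifying each pairwise equivalence separately. Since we work in $\mathbb{R}^n$, a metric space, closedness of a set is equivalent to sequential closedness, so throughout I would phrase the arguments in terms of convergent sequences. I would also keep in mind the convention from the Remark that the value $+\infty$ is allowed, so that membership in $S_{f}(\alpha)$ for a real $\alpha$ automatically forces finiteness of $f$ at that point.

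For $(1) \Rightarrow (3)$, I would take an arbitrary sequence $(x_k, y_k) \in epi\,f$ with $(x_k, y_k) \to (x,y)$ and show $(x,y) \in epi\,f$. From $y_k \geq f(x_k)$ together with lower semicontinuity, one obtains $f(x) \leq \liminf_{k} f(x_k) \leq \liminf_k y_k = y$, where the first inequality follows because $\liminf_k f(x_k) \geq \liminf_{x'\to x} f(x') \geq f(x)$ for any sequence $x_k \to x$. Hence $(x,y) \in epi\,f$ and the epigraph is closed. For $(3) \Rightarrow (2)$, I would observe that the level set is exactly the horizontal slice $S_{f}(\alpha) = \{ x \in \mathbb{R}^n : (x,\alpha) \in epi\,f \}$, that is, the preimage of the closed set $epi\,f$ under the continuous map $x \mapsto (x,\alpha)$; continuity of this injection then renders $S_{f}(\alpha)$ closed immediately. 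This slice observation is the most efficient link in the cycle.

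For $(2) \Rightarrow (1)$, I would argue by contraposition. If $f$ fails to be lower semicontinuous at some $x$, then $\liminf_{x'\to x} f(x') < f(x)$, so I can choose a real number $\alpha$ strictly between these two values and extract a sequence $x_k \to x$ with $f(x_k) < \alpha$, whence $x_k \in S_{f}(\alpha)$. Closedness of $S_{f}(\alpha)$ would force $x \in S_{f}(\alpha)$, giving $f(x) \leq \alpha < f(x)$, a contradiction; thus $f$ is lower semicontinuous everywhere. I expect the main obstacle to be technical rather than conceptual: faithfully translating the topological definition $\liminf_{x'\to x} f(x') \geq f(x)$ into the sequential statements used at each step, and handling the extended-real value $+\infty$ without ambiguity. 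Once the $\liminf$ is expressed correctly in terms of sequences, every implication reduces to a short limiting argument.
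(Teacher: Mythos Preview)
Your cyclic argument $(1)\Rightarrow(3)\Rightarrow(2)\Rightarrow(1)$ is correct and is the standard route; each step is handled properly, including the extended-real issue (in $(1)\Rightarrow(3)$ the inequality $f(x)\le y<+\infty$ automatically places $x$ in $\operatorname{dom} f$, so the epigraph condition is met). The paper itself does not supply a proof of this theorem: it is stated as a preliminary fact in \S2.1 and followed only by a remark, so there is no in-paper argument to compare against. Your write-up would serve as a complete proof where the paper offers none.
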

\begin{framed}
\noindent \emph{Remark:} The theorem above reveals the equivalence of lower semincontinuity of functions and closeness of corresponding level sets and epigraphs. This approach, considering functions and sets as a whole, is usually a research approach and viewpoint in convex analysis.
\end{framed}

\begin{center}
    \textbf{Classical Derivatives}
\end{center}
\begin{definition}
The directional derivative of f at $x\in dom\,f$ in the direction $d\in \mathbb{R}^n$ is defined as
\begin{equation*}
    f'(x;d) = \lim_{t\to 0^{+}}\frac{f(x+td)-f(x)}{t}
\end{equation*}
when the limit exists. We say that f is G$\hat{\alpha}$teaux differentiable at x provided the limit above exists for all $d\in \mathbb{R}^n$.
\end{definition}
\begin{framed}
\noindent \emph{Remark:} We say $f$ is convex G$\hat{a}$teaux differentiable at $x$ if $f$ is G$\hat{a}$teaux differentiable at $x$ and the function $d \in \mathbb{R}^n\to f'(x;d)\in \mathbb{R}$ is convex or $f$ is linear G$\hat{a}$teaux differentiable at $x$ if $f$ is G$\hat{a}$teaux differentiable at $x$ and the function $d \in \mathbb{R}^n\to f'(x;d)\in \mathbb{R}$ is linear.
\end{framed}

\begin{definition}
Suppose the equality above holds at $x$. We say $f$ is Fr$\Acute{e}$chet differentiable at $x$ if there exists a linear continuous function $f'(x): \mathbb{R}^n \to \mathbb{R}$ such that
\begin{equation*}
    \lim_{\|d\| \to 0} \frac{\|f(x+d)-f(x)-f'(x)\cdot d\|}{\|d\|} = 0,
\end{equation*}
where $f'(x)$ is called the Fr$\Acute{e}$chet derivative of $f(x)$. Usually we also write $f'(x)$ as $\nabla f(x)$.
\end{definition}

\subsection{Basic Properties of Convexity}
\begin{center}
    \textbf{Convex Sets and Support functions}
\end{center}
\begin{definition}
A subset $\Omega\subseteq \mathbb{R}^n$ is convex if the line segment $\left[a,b\right] = \{ \lambda a +(1-\lambda)b\, \vert\, \lambda \in \left[0,1\right]\}$ is entirely contained in $\Omega$ whenever $a,b\in \Omega$.
\end{definition}

\begin{definition}
Given $\alpha_{1},\dots,\alpha_{m}\in \mathbb{R}^n$, the element $x=\sum_{i=1}^m \lambda_{i}\alpha_{i}$, where $\sum_{i=1}^m \lambda_{i}=1$ and $\lambda_{i} \geq 0$ for some $m\in \mathbb{N}$, is called the convex combination of $\alpha_{1},\dots,\alpha_{m}$.
\end{definition}

\begin{definition}
Let $\Omega$ be a subset of $\mathbb{R}^n$. The convex hull of $\Omega$ is defined by
\begin{equation*}
    co\,\Omega = \bigcap\, \left\{ C\, \vert\, C \text{ is convex and }\Omega \subseteq C \right\}.
\end{equation*}
\end{definition}

\begin{theorem}
The convex hull $co\,\Omega$ is the smallest convex set containing $\Omega$. The interior $int\,\Omega$ and the closure $cl\,\Omega$ of a convex set $\Omega$ are also convex.
\end{theorem}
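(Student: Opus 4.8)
The plan is to treat the three assertions separately, since each rests on a different elementary idea: the first on the behaviour of arbitrary intersections, the second on a translation argument for open balls, and the third on passing to sequential limits. In each case the target is to verify the defining closure property of convexity, namely that the whole segment $[a,b]=\{\lambda a+(1-\lambda)b\,\vert\,\lambda\in[0,1]\}$ stays inside the set whenever its endpoints do.

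For the first claim I would first check that $co\,\Omega$ is itself convex and contains $\Omega$, and then show it is contained in every competing convex superset. Convexity follows because an arbitrary intersection of convex sets is convex: if $a,b\in co\,\Omega$, then $a,b\in C$ for each convex $C\supseteq\Omega$ entering the intersection, so $[a,b]\subseteq C$ for every such $C$, and therefore $[a,b]\subseteq co\,\Omega$. The inclusion $\Omega\subseteq co\,\Omega$ is immediate, since every $C$ in the intersection contains $\Omega$. Finally, if $D$ is any convex set with $\Omega\subseteq D$, then $D$ is one of the sets being intersected, whence $co\,\Omega\subseteq D$; this is exactly the minimality assertion.

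For the convexity of $int\,\Omega$ I would fix $a,b\in int\,\Omega$ and $\lambda\in[0,1]$ and choose a common radius $r>0$ with $B(a;r)\subseteq\Omega$ and $B(b;r)\subseteq\Omega$ (take the smaller of the two radii). The key step is to show $B(\lambda a+(1-\lambda)b;\,r)\subseteq\Omega$: any point of this ball can be written as $\lambda a+(1-\lambda)b+w$ with $\|w\|<r$, and the decomposition $\lambda(a+w)+(1-\lambda)(b+w)$ exhibits it as a convex combination of the points $a+w\in B(a;r)$ and $b+w\in B(b;r)$, both of which lie in $\Omega$. Convexity of $\Omega$ then places the point in $\Omega$, so $\lambda a+(1-\lambda)b\in int\,\Omega$. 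This translation trick is the one genuinely geometric point in the whole argument, and it is where I expect whatever small difficulty there is to lie; the rest is bookkeeping.

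For the convexity of $cl\,\Omega$ I would take $a,b\in cl\,\Omega$ and select sequences $a_k,b_k\in\Omega$ with $a_k\to a$ and $b_k\to b$. For a fixed $\lambda\in[0,1]$, convexity of $\Omega$ gives $\lambda a_k+(1-\lambda)b_k\in\Omega$, while continuity of the vector-space operations yields $\lambda a_k+(1-\lambda)b_k\to\lambda a+(1-\lambda)b$, so the limit lies in $cl\,\Omega$. Combining the three parts completes the proof.
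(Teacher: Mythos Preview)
Your argument is correct in all three parts; the intersection argument for $co\,\Omega$, the translation trick for $int\,\Omega$, and the sequential-limit argument for $cl\,\Omega$ are each the standard and complete verifications. The paper itself states this theorem in the preliminaries without proof, so there is nothing to compare against---your write-up simply supplies what the paper omits.
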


Next we turn our attention to support functions and they play an important role in the proof of optimality condition, as we will see later.

\begin{definition}
Let $S\subseteq \mathbb{R}^n$ be nonempty convex compact set, the support function of $S$ is defined by
\begin{align*}
    \delta_{S}^{\ast}(x) = \max_{s\in S} s^T x,\quad x\in \mathbb{R}^n.
\end{align*}
\end{definition}

\begin{theorem}
Let $S_{1},S_{2} \subseteq \mathbb{R}^n$ be convex compact sets. Then $\delta_{S_{1}}^{\ast}(x) \leq \delta_{S_{2}}^{\ast}(x)$ iff $S_{1}\subseteq S_{2}$.
\end{theorem}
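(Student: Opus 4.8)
The plan is to prove the two implications separately, with the forward direction being essentially immediate and the reverse direction resting on a separation argument. Throughout I read the inequality $\delta_{S_1}^{\ast}(x) \le \delta_{S_2}^{\ast}(x)$ as holding for every $x \in \mathbb{R}^n$, since this is what makes the equivalence with set inclusion meaningful.

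For the easy direction, suppose $S_1 \subseteq S_2$ and fix an arbitrary $x \in \mathbb{R}^n$. The set over which the maximum defining $\delta_{S_1}^{\ast}(x)$ is taken is then contained in the set for $\delta_{S_2}^{\ast}(x)$, so the maximum of $s^T x$ over $S_1$ cannot exceed its maximum over $S_2$; that is, $\delta_{S_1}^{\ast}(x) \le \delta_{S_2}^{\ast}(x)$. Compactness guarantees both maxima are attained, but here only the inclusion of the index sets is really used.

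For the reverse direction I would argue by contraposition: assuming $S_1 \not\subseteq S_2$, I will exhibit a direction $x$ at which the support-function inequality fails. Pick a point $s_0 \in S_1 \setminus S_2$. Since $S_2$ is convex and compact, hence closed, and $s_0 \notin S_2$, the strict separating hyperplane theorem supplies a vector $x \in \mathbb{R}^n$ and a scalar $\alpha$ with $s_0^T x > \alpha \ge s^T x$ for all $s \in S_2$. Taking the maximum over $s \in S_2$ gives $\delta_{S_2}^{\ast}(x) \le \alpha < s_0^T x$, while $\delta_{S_1}^{\ast}(x) = \max_{s \in S_1} s^T x \ge s_0^T x$ because $s_0 \in S_1$. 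Chaining these yields $\delta_{S_1}^{\ast}(x) > \delta_{S_2}^{\ast}(x)$, contradicting the hypothesis; hence $S_1 \subseteq S_2$.

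The main obstacle is the separation step in the reverse direction, and it is exactly there that the convexity and compactness of $S_2$ are indispensable. Strict separation of a point from a set requires that set to be closed, supplied by compactness, and convex; without convexity a point lying in $co\, S_2$ but outside $S_2$ could satisfy the support inequality while still violating inclusion, so the equivalence would at best recover $S_1 \subseteq co\, S_2$. I would therefore be careful to invoke the strict form of the hyperplane separation theorem rather than the merely supporting form, since obtaining the strict inequality $s_0^T x > \alpha$ is precisely what forces the support-function comparison to break.
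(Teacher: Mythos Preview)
Your proof is correct and follows exactly the route the paper itself indicates: the paper does not give a detailed argument but simply remarks that the result ``can be obtained easily by separation theorem in functional analysis,'' which is precisely your contrapositive argument for the nontrivial implication. Your added commentary on why convexity and compactness of $S_2$ are needed is accurate and goes slightly beyond what the paper records.
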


\begin{framed}
\noindent \emph{Remark:} The above theorem can be obtained easily by separation theorem in functional analysis.
\end{framed}

\begin{center}
    \textbf{Convex Functions}
\end{center}
\begin{definition}
Let $f:\Omega \mapsto \left( -\infty,+\infty \right]$ be a real-valued function defined on a convex set $\Omega \subset \mathbb{R}^n$. Then the function f is convex on $\Omega$ if
\begin{equation*}
    f(\lambda x+(1-\lambda)y) \leq \lambda f(x)+(1-\lambda)f(y) \quad \forall x,y \in \Omega\:and\:\lambda \in [0,1].
\end{equation*}
If the inequality is strict for all $x \neq y$, then f is strictly convex on $\Omega$.
\end{definition}

\begin{theorem}
A function $f:\mathbb{R}^n \mapsto \left( -\infty,+\infty \right]$ is convex if and only if its epigraph epi$\,$f is a convex subset of the product space $\mathbb{R}^n \times \mathbb{R}$.
\end{theorem}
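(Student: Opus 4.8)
The plan is to prove both implications directly from the definitions, unwinding the meaning of ``a point lies in $\text{epi}\,f$'' as the coordinate inequality $y \geq f(x)$, and the meaning of ``convex combination of two epigraph points'' as the pair obtained by applying the same convex weights in each coordinate. The entire argument is just a translation between the inequality defining the epigraph and the convexity inequality for $f$; no deep machinery is required, and both directions are essentially mirror images of one another.

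First I would establish the forward direction. Assuming $f$ is convex, I take two points $(x_1,y_1),(x_2,y_2)\in \text{epi}\,f$, so that $y_1\geq f(x_1)$ and $y_2\geq f(x_2)$ with $x_1,x_2\in\text{dom}\,f$. For $\lambda\in[0,1]$ their convex combination is $(\lambda x_1+(1-\lambda)x_2,\ \lambda y_1+(1-\lambda)y_2)$, and chaining the convexity inequality with the two epigraph inequalities yields
\begin{equation*}
f(\lambda x_1+(1-\lambda)x_2)\leq \lambda f(x_1)+(1-\lambda)f(x_2)\leq \lambda y_1+(1-\lambda)y_2,
\end{equation*}
which says precisely that the combination belongs to $\text{epi}\,f$. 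Hence $\text{epi}\,f$ is convex.

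For the converse I would reverse this reasoning. Given $x_1,x_2\in\text{dom}\,f$, the ``boundary'' points $(x_1,f(x_1))$ and $(x_2,f(x_2))$ lie in $\text{epi}\,f$ by taking $y=f(x)$. Convexity of $\text{epi}\,f$ then forces $(\lambda x_1+(1-\lambda)x_2,\ \lambda f(x_1)+(1-\lambda)f(x_2))\in\text{epi}\,f$, and reading off the defining inequality of the epigraph gives $f(\lambda x_1+(1-\lambda)x_2)\leq \lambda f(x_1)+(1-\lambda)f(x_2)$, which is exactly convexity of $f$.

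The only genuinely delicate point, and the step I would treat most carefully, is the bookkeeping around the extended value $+\infty$ and the effective domain. Since $f$ may take the value $+\infty$, the convexity inequality is vacuous whenever $f(x_1)$ or $f(x_2)$ equals $+\infty$ (the right-hand side is then $+\infty$), so I must restrict attention to $x_1,x_2\in\text{dom}\,f$, which are exactly the points contributing genuine epigraph elements. I would also note that convexity of $\text{epi}\,f$ silently forces $\text{dom}\,f$ to be convex, since $\text{dom}\,f$ is the projection of $\text{epi}\,f$ onto $\mathbb{R}^n$; this is what makes the converse well posed and reconciles it with the convex-domain hypothesis built into the definition of a convex function.
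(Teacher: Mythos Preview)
Your proof is correct and is the standard argument. The paper itself does not give a proof of this theorem; it is stated without proof in the preliminaries section (accompanied only by a remark). The paper does, however, prove the set-valued analogue later (Theorem~7.2.1), and that proof follows exactly the same two-direction scheme you outline: combine the epigraph inequalities with the defining convexity inequality in each direction. So your approach matches the methodology the paper uses elsewhere for this kind of result.
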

\begin{framed}
\noindent \emph{Remark:} The theorem above reveals the equivalence of convexity of functions and convexity of corresponding epigraphs. Here again, we can realize the power of the approach considering functions and sets as a whole mentioned in the remark of Thm 2.1.1.
\end{framed}

\begin{theorem}
Let $f, f_{i}:\mathbb{R}^n \mapsto \left( -\infty,+\infty \right]$ be convex functions for all $i = 1,\dots,m$. Then the following functions are convex as well:
\par
\begin{flushleft}
$\begin{array}{l}{\text { (i) The multiplication by scalars } \lambda f \text { for any } \lambda>0.} \\ {\text { (ii) The sum function } \sum_{i=1}^{m} f_{i}.} \\ {\text { (iii) The maximum function } \max _{1 \leq i \leq m} f_{i}.}\end{array}$
\end{flushleft}
\end{theorem}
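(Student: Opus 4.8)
The plan is to verify in each case the defining inequality of Definition 2.2.5 directly: fix arbitrary $x,y\in\mathbb{R}^n$ and $\lambda\in[0,1]$ and show that the candidate function evaluated at $\lambda x+(1-\lambda)y$ does not exceed the corresponding convex combination of its values at $x$ and $y$. For parts (i) and (ii) this is immediate; part (iii) carries the only real subtlety, and for it I would also offer the cleaner alternative of invoking the epigraph characterization of convexity established in Theorem 2.2.3.

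For (i), I would start from the convexity inequality for $f$ and multiply both sides by $\lambda>0$; since $\lambda$ is positive the inequality direction is preserved, which is exactly convexity of $\lambda f$ (positivity is essential, as a negative scalar would reverse the inequality). For (ii), I would write the convexity inequality for each $f_i$ and add the $m$ inequalities; summation preserves the direction, and the right-hand side regroups into $\lambda\sum_i f_i(x)+(1-\lambda)\sum_i f_i(y)$. Throughout I would note that the arithmetic stays valid in the extended reals, since the convention $\infty+r=\infty$ keeps every sum well defined whenever a term is $+\infty$.

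For (iii), the key chain of inequalities is
\[
\max_{1\le i\le m} f_i\bigl(\lambda x+(1-\lambda)y\bigr)
\le \max_{1\le i\le m}\bigl[\lambda f_i(x)+(1-\lambda)f_i(y)\bigr]
\le \lambda\max_{1\le i\le m} f_i(x)+(1-\lambda)\max_{1\le i\le m} f_i(y),
\]
where the first step applies convexity of each $f_i$ underneath the maximum, and the second uses the subadditivity of the maximum with respect to sums together with the monotonicity of scaling by the nonnegative weights $\lambda$ and $1-\lambda$. Alternatively, observing that $\mathrm{epi}\bigl(\max_i f_i\bigr)=\bigcap_{i=1}^m \mathrm{epi}\,f_i$, convexity of $\max_i f_i$ follows because each $\mathrm{epi}\,f_i$ is convex by Theorem 2.2.3 and any intersection of convex sets is convex (Theorem 2.2.1 being in this spirit), whence Theorem 2.2.3 read in reverse yields the claim.

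I expect the only genuine obstacle to be the second inequality in the max estimate, namely justifying that the maximum of a finite family of sums is bounded by the sum of the maxima; this is the step most easily mishandled, so I would verify it termwise by bounding each $\lambda f_i(x)+(1-\lambda)f_i(y)$ by the right-hand side before taking the maximum over $i$. The epigraph route sidesteps this point entirely, and is the approach I would ultimately prefer for (iii), since it reduces the whole assertion to the convexity of intersections already at our disposal.
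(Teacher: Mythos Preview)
Your proposal is correct. However, the paper does not actually supply a proof of this theorem: it is listed among the preliminaries in \S2.2 and stated without proof, followed only by a remark extending (iii) to arbitrary index sets. So there is nothing to compare against on the paper's side.

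That said, your argument is sound in all three parts. The direct verification of the convexity inequality for (i) and (ii) is routine and your handling of extended-real arithmetic is appropriate. For (iii), both your approaches are valid; the epigraph route via $\mathrm{epi}(\max_i f_i)=\bigcap_i \mathrm{epi}\,f_i$ is particularly in keeping with the paper's emphasis (see the remarks after Theorems 2.1.1 and 2.2.3) on the interplay between functions and their associated sets, and it also makes the extension to arbitrary suprema in the paper's subsequent remark immediate.
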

\begin{framed}
\noindent \emph{Remark:} In fact, for $i\in I$ be a collection of convex functions with a nonempty index set $I$, the supremum function $f(x) = \sup _{i\in I} f_{i}(x)$ is also convex.
\end{framed}

\subsection{Variational Geometry}
\begin{center}
    \textbf{Cone and Polar Cone}
\end{center}
\begin{definition}
A subset $C\subseteq \mathbb{R}^n$ is called a cone if $x\in C,\alpha \in \left[ 0 , +\infty \right)$ then $\alpha x \in C$. A cone $C$ is called pointed if $C\cap (-C)={0}$. A cone $C$ is called reproducing if $C-C = X$, in this case one also says that $C$ generates $X$. 
\end{definition}

\begin{definition}
The cone generated by a nonempty subset $M$ is denoted
\begin{equation*}
    cone(M) = \{ \lambda x\:\vert\:\lambda \geq 0\:and\: x\in M\}.
\end{equation*}
\end{definition}

\begin{definition}
The polar cone $C^{\ast}$ of any cone $C \subseteq \mathbb{R}^n$ is defined by
\begin{equation*}
    C^{\ast} = \left\{ y\in \mathbb{R}^n\:\vert\:\langle y,x \rangle \leq 0, x \in C \right\}.
\end{equation*}
\end{definition}
\begin{theorem}
We can conclude from the definition above that the polar cones $C^{\ast}$ is a closed convex cone and $C^{\ast} = (co\, C)^{\ast}$. Furthermore, given any two cone $C,D \subseteq \mathbb{R}^n$, if $C \subseteq D$ then $C^{\ast} \supseteq D^{\ast}$.
\end{theorem}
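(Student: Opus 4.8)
The plan is to establish the three assertions separately, beginning with the two that are immediate and then using them to drive the only part that requires an appeal to an earlier result. Throughout I will use the half-space reformulation of the polar: for each fixed $x\in C$ set $H_x = \{\, y\in\mathbb{R}^n : \langle y,x\rangle \le 0 \,\}$, so that by the very definition of the polar cone one has $C^{\ast} = \bigcap_{x\in C} H_x$.

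First I would prove that $C^{\ast}$ is a closed convex cone. Since the functional $y\mapsto\langle y,x\rangle$ is linear and continuous, each $H_x$ is closed; it is convex (being a half-space), and it is a cone, because if $\langle y,x\rangle\le 0$ and $\alpha\ge 0$ then $\langle \alpha y,x\rangle = \alpha\langle y,x\rangle \le 0$. An arbitrary intersection of closed sets is closed, an arbitrary intersection of convex sets is convex, and an arbitrary intersection of cones is a cone, so $C^{\ast}=\bigcap_{x\in C}H_x$ inherits all three properties simultaneously. This disposes of the first claim with no computation beyond checking a single $H_x$. Next I would settle the inclusion-reversing statement, which is purely set-theoretic: if $C\subseteq D$ and $y\in D^{\ast}$, then $\langle y,x\rangle\le 0$ for every $x\in D$, hence a fortiori for every $x\in C$, so $y\in C^{\ast}$; therefore $D^{\ast}\subseteq C^{\ast}$, i.e. $C^{\ast}\supseteq D^{\ast}$.

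Finally I would prove $C^{\ast}=(co\,C)^{\ast}$ using the two facts just obtained. Since $C\subseteq co\,C$, the inclusion-reversing step immediately gives $(co\,C)^{\ast}\subseteq C^{\ast}$. For the reverse inclusion I would exploit the half-space reformulation together with Theorem 2.2.1. Observe that $y\in C^{\ast}$ is equivalent to $C\subseteq H_y$, where $H_y=\{\, z:\langle y,z\rangle\le 0 \,\}$. Because $H_y$ is convex and contains $C$, and because $co\,C$ is by Theorem 2.2.1 the smallest convex set containing $C$, we get $co\,C\subseteq H_y$; this says precisely that $\langle y,z\rangle\le 0$ for all $z\in co\,C$, i.e. $y\in(co\,C)^{\ast}$. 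Hence $C^{\ast}\subseteq(co\,C)^{\ast}$, and equality follows.

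The main obstacle, such as it is, lies in this last inclusion: the temptation is to compute with explicit convex combinations $\sum_i\lambda_i x_i$, which would force one to invoke the extra fact (not recorded above) that $co\,C$ equals the set of all finite convex combinations of points of $C$. The cleaner route is to recognize that the minimality property of $co\,C$ supplied by Theorem 2.2.1 does the work in a single line once the polar is rephrased as a containment in a half-space. Everything else is routine verification.
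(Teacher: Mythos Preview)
Your proof is correct. The paper does not actually supply a proof of this statement: it is presented in the preliminaries as something one ``can conclude from the definition above,'' with no argument given. Your write-up therefore fills in the omitted details, and it does so cleanly. The half-space reformulation $C^{\ast}=\bigcap_{x\in C}H_x$ is the natural device for getting closedness, convexity, and the cone property in one stroke, and your use of the minimality of $co\,C$ (Theorem~2.2.1) for the inclusion $C^{\ast}\subseteq (co\,C)^{\ast}$ is indeed tidier than expanding convex combinations explicitly. There is nothing to compare against and nothing to fix.
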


\begin{theorem}
For any nonempty cone $C\subset \mathbb{R}^n$, the polar cone of $C^{\ast}$ namely $C^{\ast \ast}$ is consistent with the closed convex hull of C, that is cl $co\,C$. In particular, if C is a closed convex cone then C = $C^{\ast \ast}$.
\end{theorem}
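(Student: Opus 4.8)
The plan is to prove the two inclusions $\mathrm{cl}\,\mathrm{co}\,C \subseteq C^{\ast\ast}$ and $C^{\ast\ast} \subseteq \mathrm{cl}\,\mathrm{co}\,C$ separately, the second one resting on the separation theorem. The first inclusion is the routine direction. I would begin by observing that $C \subseteq C^{\ast\ast}$ directly from the definitions: if $x \in C$, then for every $y \in C^{\ast}$ we have $\langle y, x\rangle \le 0$ by the very definition of $C^{\ast}$, which is exactly the condition for $x$ to lie in $(C^{\ast})^{\ast} = C^{\ast\ast}$. Since Theorem 2.3.1 guarantees that $C^{\ast\ast}$ is a closed convex cone, and since $\mathrm{cl}\,\mathrm{co}\,C$ is by Theorem 2.2.1 the smallest closed convex set containing $C$, the inclusion $\mathrm{cl}\,\mathrm{co}\,C \subseteq C^{\ast\ast}$ follows at once.

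For the reverse and harder inclusion I would argue by contradiction. Suppose some $x_0 \in C^{\ast\ast}$ satisfies $x_0 \notin \mathrm{cl}\,\mathrm{co}\,C$. Because $\mathrm{cl}\,\mathrm{co}\,C$ is a nonempty closed convex set not containing $x_0$, the separation theorem furnishes a vector $y_0 \in \mathbb{R}^n$ and a scalar $\alpha$ strictly separating them, say $\langle y_0, x_0\rangle > \alpha \ge \langle y_0, z\rangle$ for all $z \in \mathrm{cl}\,\mathrm{co}\,C$. The decisive step is then to exploit the cone structure: since $C$ is nonempty and a cone, $0 \in C \subseteq \mathrm{cl}\,\mathrm{co}\,C$, which forces $\alpha \ge 0$; and since each $z \in C$ satisfies $\lambda z \in C$ for all $\lambda \ge 0$, replacing $z$ by $\lambda z$ in the inequality $\langle y_0, z\rangle \le \alpha$ and letting $\lambda \to +\infty$ forces $\langle y_0, z\rangle \le 0$ for every $z \in C$. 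This is precisely the statement $y_0 \in C^{\ast}$.

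Having located $y_0 \in C^{\ast}$, I would close the argument by invoking the membership $x_0 \in C^{\ast\ast} = (C^{\ast})^{\ast}$, which yields $\langle x_0, y_0\rangle \le 0$. This contradicts the separation inequality $\langle y_0, x_0\rangle > \alpha \ge 0$, so no such $x_0$ can exist and hence $C^{\ast\ast} \subseteq \mathrm{cl}\,\mathrm{co}\,C$. Combining both inclusions gives $C^{\ast\ast} = \mathrm{cl}\,\mathrm{co}\,C$. The final assertion is then immediate: if $C$ is already a closed convex cone, then $\mathrm{cl}\,\mathrm{co}\,C = C$, whence $C = C^{\ast\ast}$.

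I expect the main obstacle to be the correct deployment of the separation theorem together with the scaling argument that upgrades the separating inequality into the cone membership $y_0 \in C^{\ast}$; in particular one must use that $0$ belongs to the cone (to pin down the sign of $\alpha$) and that the cone is unbounded in the direction of each of its elements (to drive $\langle y_0, z\rangle$ down to a nonpositive value). Everything else is bookkeeping with the definitions of polar cone and closed convex hull.
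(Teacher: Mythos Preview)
Your argument is correct and is the standard bipolar-theorem proof: the easy inclusion $C\subseteq C^{\ast\ast}$ followed by passage to the closed convex hull using Theorem~2.3.1, and the hard inclusion via strict separation plus the cone-scaling trick to land the separating functional in $C^{\ast}$. One small remark: your appeal to ``Theorem~2.2.1'' for the fact that $\mathrm{cl}\,\mathrm{co}\,C$ is the \emph{smallest} closed convex set containing $C$ is slightly indirect---that theorem only asserts that $\mathrm{co}\,C$ is the smallest convex superset and that closures of convex sets are convex---but the missing line (any closed convex $D\supseteq C$ satisfies $\mathrm{co}\,C\subseteq D$, hence $\mathrm{cl}\,\mathrm{co}\,C\subseteq\mathrm{cl}\,D=D$) is immediate, so this is not a gap.

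As for comparison: the paper does not supply its own proof of this statement. Theorem~2.3.2 is listed among the preliminaries in \S2.3 without argument, in the same spirit as Theorem~2.3.3, for which the reader is referred to \cite{fukushima2011fundamentals}. Your proof is exactly the classical one found in standard references, so there is nothing to contrast.
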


Finally, an important theorem about convex polyhedral cones will be  introduced. This theorem is essentially equivalent to the Farkas's theorem and will be used in the proof of optimality conditions in \textsection3. The readers can find proof in \cite{fukushima2011fundamentals}.
\begin{theorem}
Consider the closed convex cone generated by vectors $a^{1} ,\dots, a^{m} \in \mathbb{R}^n$ as follows
\begin{equation*}
    C = \left\{ x\in \mathbb{R}^n\:\middle\vert\:x = \sum_{i=1}^m \alpha_{i} a^{i}, \alpha_{i} \geq 0,\quad i = 1,\dots,m \right\},
\end{equation*}
and a closed convex cone composed of all vectors that maintain $90^{\circ}$ or more with each $a^{i}$ vector
\begin{equation*}
    K = \left\{ y\in \mathbb{R}^n \:\middle\vert\: \langle a^{i},y \rangle \leq 0,\quad i = 1,\dots,m\right\},
\end{equation*}
then $K = C^{\ast}$ and $C = K^{\ast}$.
\end{theorem}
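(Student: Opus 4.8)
The plan is to separate the statement into its two assertions, $K = C^{\ast}$ and $C = K^{\ast}$, and to exploit the bipolar theorem (Theorem 2.3.2) so that only one genuinely substantial point remains. I would dispose of $K = C^{\ast}$ first, directly from the definitions by double inclusion. If $y \in K$, so that $\langle a^{i}, y\rangle \le 0$ for every $i$, then for any $x = \sum_{i=1}^{m} \alpha_{i} a^{i} \in C$ with $\alpha_{i} \ge 0$ one has $\langle y, x\rangle = \sum_{i=1}^{m} \alpha_{i}\langle a^{i}, y\rangle \le 0$, whence $y \in C^{\ast}$; conversely each generator $a^{i}$ lies in $C$ (take $\alpha_{i}=1$ and all other coefficients zero), so any $y \in C^{\ast}$ must satisfy $\langle a^{i}, y\rangle \le 0$ and therefore $y \in K$. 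This step is purely routine.

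For $C = K^{\ast}$ I would feed the first identity into the bipolar theorem. Since $K = C^{\ast}$, we have $K^{\ast} = (C^{\ast})^{\ast} = C^{\ast\ast}$, and Theorem 2.3.2 gives $C^{\ast\ast} = \operatorname{cl}\operatorname{co} C$. The cone $C$ is convex, because a convex combination $\lambda \sum_{i} \alpha_{i} a^{i} + (1-\lambda)\sum_{i}\beta_{i} a^{i} = \sum_{i}\bigl(\lambda\alpha_{i} + (1-\lambda)\beta_{i}\bigr) a^{i}$ again has nonnegative coefficients, so $\operatorname{co} C = C$. Consequently the entire assertion $C = K^{\ast}$ collapses to the single claim $\operatorname{cl} C = C$, that is, to the \emph{closedness} of $C$ — the one property already pre-announced in the wording of the statement and precisely the content where the finiteness of the generating set is indispensable.

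The hard part, then, is showing that a finitely generated cone is closed, and here my plan is a Carath\'eodory-type reduction. First I would prove that every $x \in C$ can be written as a nonnegative combination of a \emph{linearly independent} subset of $\{a^{1},\dots,a^{m}\}$: choosing a representation of $x$ with the fewest strictly positive coefficients, any linear dependence $\sum_{i\in S}\gamma_{i} a^{i} = 0$ among the active generators would let me replace the coefficients $\alpha_{i}$ by $\alpha_{i} - t\gamma_{i}$ and raise $t$ until the first coefficient hits zero while all remain nonnegative, producing a shorter representation and contradicting minimality. Since there are only finitely many subsets of the generators, this exhibits $C$ as a finite union of cones $\operatorname{cone}(B)$ indexed by linearly independent subsets $B$. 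Each such $\operatorname{cone}(B)$ is the image of the closed orthant $\mathbb{R}_{\ge 0}^{\lvert B\rvert}$ under the linear map $\beta \mapsto \sum_{j}\beta_{j} a^{i_{j}}$, which is injective by independence and hence a homeomorphism onto a subspace of $\mathbb{R}^{n}$, so the image of the closed orthant is closed. A finite union of closed sets being closed, $C$ is closed, which completes the argument.

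I expect the closedness step to be the main obstacle: the identity $K^{\ast} = C^{\ast\ast}$ and the convexity of $C$ are immediate, but $\operatorname{cl}\operatorname{co} C = C$ genuinely requires the exchange/reduction lemma above and fails in general for infinitely generated cones. As an alternative to the bipolar route, one could establish $C = K^{\ast}$ by a separation argument — if $b \notin C$ (with $C$ closed and convex) there is $y$ with $\langle y, b\rangle > 0$ and $\langle a^{i}, y\rangle \le 0$ for all $i$, so $y \in K$ yet $b \notin K^{\ast}$ — but this too presupposes the closedness of $C$, so the essential difficulty is unavoidable and is exactly the point I would concentrate on.
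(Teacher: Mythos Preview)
Your argument is correct. The paper does not actually supply a proof of this theorem: it merely states the result, remarks that it is ``essentially equivalent to the Farkas's theorem,'' and refers the reader to Fukushima's book \cite{fukushima2011fundamentals}. So there is no in-paper proof to compare your proposal against.

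That said, your plan is a standard and complete route. The identity $K = C^{\ast}$ is, as you say, a routine double inclusion. Feeding it into the bipolar theorem (Theorem~2.3.2 in the paper) reduces $C = K^{\ast}$ to the closedness of $C$, and your conic Carath\'eodory reduction---rewrite any $x\in C$ over a linearly independent subfamily of the $a^{i}$, then express $C$ as a finite union of cones $\operatorname{cone}(B)$ with $B$ independent, each of which is the image of a closed orthant under a linear embedding and hence closed---is exactly the right way to secure that point. The separation alternative you sketch at the end is precisely Farkas's lemma, which is what the paper invokes by reference; so your two routes (bipolar plus closedness, or direct separation) correspond to the two standard presentations of this fact, and the paper simply outsources both.
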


\begin{corollary}
Consider the following two closed convex cones defined by $a^1,\dots,a^m \in \mathbb{R}^n$ and $b^1,\dots,b^l \in \mathbb{R}^n$:
\begin{align*}
    C &= \left \{ x\in \mathbb{R}^n\:\middle\vert\:x = \sum\limits_{j=1}^{m} \alpha_{i} a^{i} + \sum\limits_{j=1}^{l} \beta_{j} b^{j},\alpha_{i}\geq 0,i=1,\dots,m,\beta_{j}\in\mathbb{R},j=1,\dots,l \right \},\\
    K &= \left \{y\in\mathbb{R}^n\:\middle\vert\:\langle a^{i},y \rangle \leq 0, i = 1,\dots,m;\langle b^{j},y \rangle = 0,j = 1,\dots,l \right \}.
\end{align*}
Then we have $K = C^{\ast}$ and $C = K^{\ast}$.
\end{corollary}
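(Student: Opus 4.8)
The plan is to reduce the corollary to the finitely generated case already settled in Theorem 2.3.3, by converting each equality constraint into a pair of opposite inequality constraints. The key algebraic observation is that a sign-free coefficient $\beta_j \in \mathbb{R}$ can always be split as $\beta_j = \beta_j^+ - \beta_j^-$ with $\beta_j^+, \beta_j^- \geq 0$, so that
\begin{equation*}
    \beta_j b^j = \beta_j^+ b^j + \beta_j^- (-b^j).
\end{equation*}
In this way the free part $\sum_{j=1}^l \beta_j b^j$ is rewritten as a nonnegative (conic) combination of the \emph{doubled} list of generators $b^1, -b^1, \dots, b^l, -b^l$, thereby absorbing the equality constraints into the purely conic framework of Theorem 2.3.3.

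Concretely, first I would introduce the enlarged finite family
\begin{equation*}
    a^1, \dots, a^m, \; b^1, -b^1, \; \dots, \; b^l, -b^l
\end{equation*}
and let $\widetilde{C}$ denote the closed convex cone it generates with nonnegative coefficients. Using the splitting identity one verifies both inclusions and concludes $\widetilde{C} = C$: every $x \in C$ is produced by choosing $\beta_j^{\pm}$ as above, and conversely every nonnegative combination of the enlarged family lies in $C$ since $\beta_j^+ b^j + \beta_j^-(-b^j) = (\beta_j^+ - \beta_j^-) b^j$ with $\beta_j^+ - \beta_j^- \in \mathbb{R}$.

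Next I would apply Theorem 2.3.3 to this enlarged family. That theorem identifies the polar $\widetilde{C}^{\ast}$ as the set of all $y$ satisfying $\langle a^i, y \rangle \leq 0$ for every $i$, together with $\langle b^j, y \rangle \leq 0$ and $\langle -b^j, y \rangle \leq 0$ for every $j$. Since the latter two inequalities are jointly equivalent to $\langle b^j, y \rangle = 0$, the cone $\widetilde{C}^{\ast}$ coincides exactly with the cone $K$ appearing in the statement. Combining this with the equality $\widetilde{C} = C$ from the previous step, the two identities $K = \widetilde{C}^{\ast}$ and $\widetilde{C} = (\widetilde{C}^{\ast})^{\ast}$ furnished by Theorem 2.3.3 then yield $K = C^{\ast}$ and $C = K^{\ast}$ simultaneously.

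There is no genuine obstacle in this argument; its entire content is the observation that a sign-free variable is the difference of two nonnegative variables, reducing equality constraints to inequality constraints. The only point requiring a little care is the set-theoretic verification that $\widetilde{C}$ and $C$ are the same cone (both inclusions), after which the conclusion is inherited verbatim from the already-proved Farkas-type theorem.
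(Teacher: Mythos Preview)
Your proposal is correct and is precisely the intended derivation: the paper states this result as a corollary of Theorem~2.3.3 without supplying a proof, and the standard way to obtain it from that theorem is exactly the reduction you describe---replace each free coefficient $\beta_j\in\mathbb{R}$ by a difference $\beta_j^+-\beta_j^-$ of nonnegative reals, thereby enlarging the generating family to $a^1,\dots,a^m,b^1,-b^1,\dots,b^l,-b^l$ and converting each equality $\langle b^j,y\rangle=0$ into the pair $\langle b^j,y\rangle\le0$, $\langle -b^j,y\rangle\le0$. There is nothing to add.
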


\begin{center}
    \textbf{Bouligand Tangent Cone and Normal Cone}
\end{center}
Let's consider the geometric concept describing linear approximation of a given set $S\subseteq \mathbb{R}^n$:
\begin{definition}
The Bouligand(or contingent) tangent cone to S at x, denoted $T(S,x)$, is defined as follows:
 \begin{equation*}
    T(S,x) = \left\{ y \in \mathbb{R}^n \:\vert\: \lim_{k\to \infty} \alpha_{k} (x^{k}-x) = y, \lim_{k\to \infty} x^{k} = x, x^{k}\in S, \alpha_{k} \geq 0, k = 1,2,\dots \right\}.
\end{equation*}
\end{definition}

\begin{theorem}
Consider the distance function $d_{S}(x)$ associated with $S$ : $d_{S}(x) = \min \{ \| x - s \| \,\vert\, s\in S \}$. Then we have $v \in T(S,x)$ iff
\begin{equation*}
    \liminf \limits_{t\to 0^{+}} \frac{d_{S}(x+tv)}{t} = 0.
\end{equation*}
\end{theorem}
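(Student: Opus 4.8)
The plan is to route both conditions through a single scaled reformulation of tangent-cone membership and then reconcile them using the triangle inequality with (near-)nearest points of $S$. The pivotal observation is that $v \in T(S,x)$ holds if and only if there exist sequences $t_k \downarrow 0$ and $v_k \to v$ with $x + t_k v_k \in S$. Indeed, given the defining data $\alpha_k \geq 0$ and $x^k \in S$ with $x^k \to x$ and $\alpha_k(x^k - x) \to v$, I would put $t_k = 1/\alpha_k$ and $v_k = \alpha_k(x^k - x)$, so that $x + t_k v_k = x^k \in S$; conversely, setting $\alpha_k = 1/t_k$ and $x^k = x + t_k v_k$ recovers the original description. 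Since $d_S \geq 0$, the $\liminf$ is automatically nonnegative, so the whole content of the theorem is the existence of a sequence $t_k \downarrow 0$ along which $d_S(x + t_k v)/t_k \to 0$.

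For the forward implication, suppose $v \in T(S,x)$ with witnessing sequences as above, and assume first $v \neq 0$. Then $x^k - x \to 0$ together with $\alpha_k(x^k - x) \to v \neq 0$ forces $\alpha_k \to \infty$, so $t_k = 1/\alpha_k \downarrow 0$ is legitimate. Because $x^k \in S$, I would estimate
\[
\frac{d_S(x + t_k v)}{t_k} \leq \frac{\|x + t_k v - x^k\|}{t_k} = \left\| v - \alpha_k(x^k - x)\right\| \to 0 ,
\]
which yields the claim. The degenerate case $v = 0$ is handled separately: here $d_S(x + t\cdot 0)/t = d_S(x)/t$, and since $x \in cl\,S$ (being a limit of points of $S$) one has $d_S(x) = 0$, so the quotient vanishes identically.

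For the reverse implication, assume $\liminf_{t\to 0^{+}} d_S(x + tv)/t = 0$ and select $t_k \downarrow 0$ with $d_S(x + t_k v)/t_k \to 0$. For each $k$ I would choose a near-nearest point $s_k \in S$ satisfying $\|x + t_k v - s_k\| \leq d_S(x + t_k v) + t_k^{2}$ (an exact projection when $S$ is closed). Setting $x^k = s_k$ and $\alpha_k = 1/t_k$, the triangle inequality gives $\|x^k - x\| \leq \|s_k - (x + t_k v)\| + t_k\|v\| \to 0$, so $x^k \to x$, while
\[
\left\|\alpha_k(x^k - x) - v\right\| = \frac{\|s_k - (x + t_k v)\|}{t_k} \leq \frac{d_S(x + t_k v)}{t_k} + t_k \to 0 ,
\]
exhibiting $v$ as an element of $T(S,x)$.

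The main obstacle here is less a genuine difficulty than a point of care: the attainment of the infimum in the definition of $d_S$. If $S$ is not assumed closed the minimum need not be achieved, so one must work with the approximate projections (the $+\,t_k^{2}$ slack above) and check that this perturbation is negligible after dividing by $t_k$. A secondary bookkeeping subtlety is the treatment of the degenerate cases $v = 0$ and, symmetrically, $\alpha_k = 0$ in the forward direction; both are dispatched at once by noting that $0 \in T(S,x)$ and $d_S(x) = 0$ hold simultaneously precisely when $x \in cl\,S$, so the two conditions remain in lockstep throughout.
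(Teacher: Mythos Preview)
The paper does not actually supply a proof of this theorem: it appears in the preliminaries (\S2.3) and is stated with only a remark noting that the distance-function formula is often taken as an alternative definition of the Bouligand tangent cone. There is therefore no ``paper's own proof'' to compare against.

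Your argument is correct and is the standard one. The forward direction is clean: for $v\neq 0$ the relation $\alpha_k\|x^k-x\|\to\|v\|>0$ with $\|x^k-x\|\to 0$ forces $\alpha_k\to\infty$, and the estimate $d_S(x+t_kv)/t_k\le\|v-\alpha_k(x^k-x)\|$ does the work; the case $v=0$ is correctly dispatched via $x\in cl\,S$ (which follows from the existence of the witnessing sequence $x^k\in S$, $x^k\to x$). The reverse direction via approximate projections $s_k$ with slack $t_k^2$ is exactly right and neatly sidesteps the issue that the paper's use of ``$\min$'' in the definition of $d_S$ tacitly presumes attainment. One minor presentational point: your opening ``pivotal observation'' (the $t_k\downarrow 0$, $v_k\to v$ reformulation) is stated as an iff but you only justify it for $v\neq 0$; since your subsequent argument works directly from the paper's $(\alpha_k,x^k)$ definition anyway, you could simply drop that paragraph or flag it as motivation.
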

\begin{framed}
\noindent \emph{Remark:} We can see that the natural concept of tangent cone can be characterized by means of the distance function, sometimes the above formula is also used as an alternative definition of Bouligand tangent cone. Another useful fact is that $T(S,x)$ is always closed for any $S\subseteq \mathbb{R}^n$ and $x\in \mathbb{R}^n$.
\end{framed}

\begin{theorem}
Let $S$ be a nonempty convex set. Then the Bouligand tangent cone $T(S,x)$ is convex for every $x\in S$.
\end{theorem}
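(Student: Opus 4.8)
The plan is to reduce the statement to the more tractable identity $T(S,x)=\mathrm{cl}\,\mathrm{cone}(S-x)$ and then exploit the fact that the closure of a convex set is convex. First I would record the elementary observation that, since $T(S,x)$ is a cone, proving it convex is equivalent to proving it is closed under addition: indeed, if $y_1,y_2$ lie in a cone $C$ that is closed under addition, then for $\lambda\in[0,1]$ both $\lambda y_1$ and $(1-\lambda)y_2$ lie in $C$, hence so does $\lambda y_1+(1-\lambda)y_2$; the converse follows from the midpoint argument. This lets me work additively rather than wrestle directly with convex combinations of limiting directions.

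The core step is establishing, for convex $S$ and $x\in S$, the representation
\[
  T(S,x)=\mathrm{cl}\,\mathrm{cone}(S-x),\qquad \mathrm{cone}(S-x)=\{\alpha(s-x):\alpha\ge 0,\ s\in S\}.
\]
For the inclusion $\mathrm{cone}(S-x)\subseteq T(S,x)$, I would fix $s\in S$ and use convexity of $S$ to note that $x^{k}:=(1-\tfrac1k)x+\tfrac1k s\in S$ with $x^{k}\to x$; taking $\alpha_k=k$ gives $\alpha_k(x^{k}-x)=s-x$, so $s-x\in T(S,x)$, and the cone property of $T(S,x)$ upgrades this to all of $\mathrm{cone}(S-x)$. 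Since $T(S,x)$ is closed (as recorded in the remark above), the closure is absorbed. The reverse inclusion is immediate from the definition: every defining approximant $\alpha_k(x^{k}-x)$ with $x^{k}\in S$ already belongs to $\mathrm{cone}(S-x)$, so any $y\in T(S,x)$ is a limit of points of $\mathrm{cone}(S-x)$.

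Finally I would verify that $\mathrm{cone}(S-x)$ is itself convex, and this is exactly where convexity of $S$ enters essentially. Given two elements $\alpha_1(s_1-x)$ and $\alpha_2(s_2-x)$ with $\alpha_1+\alpha_2>0$, I rewrite their sum as $(\alpha_1+\alpha_2)(s'-x)$, where $s'=\frac{\alpha_1 s_1+\alpha_2 s_2}{\alpha_1+\alpha_2}\in S$ is a convex combination of $s_1,s_2$; the degenerate case $\alpha_1+\alpha_2=0$ forces both summands to vanish. Hence $\mathrm{cone}(S-x)$ is a cone closed under addition, thus convex, and its closure is convex by Theorem 2.2.1, which yields the claim. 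I expect the only genuinely delicate point to be the representation itself, specifically being careful that the closedness of $T(S,x)$ is what allows the closure to pass through the first inclusion, since once $T(S,x)=\mathrm{cl}\,\mathrm{cone}(S-x)$ is in hand the convexity is essentially formal.
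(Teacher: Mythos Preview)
Your argument is correct. The representation $T(S,x)=\mathrm{cl}\,\mathrm{cone}(S-x)$ is established cleanly in both directions, the verification that $\mathrm{cone}(S-x)$ is closed under addition is exactly the place where convexity of $S$ is used, and the appeal to Theorem~2.2.1 for passing convexity through the closure is legitimate. One minor remark: in your opening paragraph you phrase the reduction as ``a cone closed under addition is convex,'' which is fine, but you then never actually use this additive viewpoint separately from the representation---once you have $T(S,x)=\mathrm{cl}\,\mathrm{cone}(S-x)$ and $\mathrm{cone}(S-x)$ convex, you are done, so that first paragraph could be trimmed.

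As for comparison with the paper: the paper does not supply a proof of this statement. It appears in \S2 among the preliminaries, stated without argument and with an implicit reference to the background sources (e.g.\ \cite{fukushima2011fundamentals}). Your route via the identity $T(S,x)=\mathrm{cl}\,\mathrm{cone}(S-x)$ is the standard one in convex analysis and is entirely appropriate here; it also has the side benefit of giving an explicit description of the tangent cone in the convex case, which is stronger than the bare convexity assertion.
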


\begin{definition}
The polar cone of Bouligand tangent cone $T(S,x)$ is called the normal cone of S at x, denoted by $N(S,x)$.
\end{definition}

\begin{theorem}
More precisely, we often consider the case when S is a convex set. Under this assumption, the normal cone can be expressed as
\begin{equation*}
    N(S,x) = \left\{ z\in \mathbb{R}^n\:\vert\:\langle z,y-x \rangle \leq 0,\quad \forall y \in S \right\}.
\end{equation*}
\end{theorem}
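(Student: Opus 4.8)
The plan is to establish the set equality by proving two inclusions, exploiting that by Definition 2.3.5 the normal cone is precisely the polar cone $N(S,x)=T(S,x)^{\ast}$ of the Bouligand tangent cone. Throughout, write $M=\{z\in\mathbb{R}^n\mid\langle z,y-x\rangle\leq 0,\ \forall y\in S\}$ for the target set on the right-hand side. The whole argument hinges on the single geometric fact that, when $S$ is convex, the shifted set $S-x=\{y-x\mid y\in S\}$ is contained in $T(S,x)$.

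I would prove the inclusion $S-x\subseteq T(S,x)$ first, as it is the only place the convexity hypothesis is used. Fix $y\in S$ and put $x^{k}=(1-\tfrac1k)x+\tfrac1k y$ for $k=1,2,\dots$; since $x,y\in S$ and $S$ is convex, each $x^{k}$ is a convex combination of two points of $S$ and hence lies in $S$, while $x^{k}\to x$. Taking $\alpha_{k}=k\geq 0$ gives $\alpha_{k}(x^{k}-x)=y-x$ for every $k$, so the constant sequence with value $y-x$ meets all the requirements of Definition 2.3.4 and therefore $y-x\in T(S,x)$. As $y\in S$ was arbitrary, $S-x\subseteq T(S,x)$.

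The inclusion $N(S,x)\subseteq M$ is then immediate: if $z\in N(S,x)=T(S,x)^{\ast}$, then $\langle z,v\rangle\leq 0$ for all $v\in T(S,x)$, and in particular for every $v=y-x$ with $y\in S$, which is exactly the condition defining $z\in M$. For the reverse inclusion $M\subseteq N(S,x)$ — which needs no convexity — I would take $z\in M$ and an arbitrary $v\in T(S,x)$. By Definition 2.3.4 there are scalars $\alpha_{k}\geq 0$ and points $x^{k}\in S$ with $x^{k}\to x$ and $\alpha_{k}(x^{k}-x)\to v$. Each $x^{k}\in S$ gives $\langle z,x^{k}-x\rangle\leq 0$, and multiplying by $\alpha_{k}\geq 0$ preserves the sign; letting $k\to\infty$ and using continuity of the inner product yields $\langle z,v\rangle\leq 0$. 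Since $v\in T(S,x)$ was arbitrary, $z\in T(S,x)^{\ast}=N(S,x)$.

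Combining the two inclusions gives $N(S,x)=M$, the asserted formula. I expect the main (indeed the only) obstacle to be the convexity step $S-x\subseteq T(S,x)$: this is where the hypothesis genuinely bites, since for a nonconvex $S$ a chord direction $y-x$ need not be a limit of feasible difference quotients and the stated formula can fail. The remaining two inclusions are routine, resting only on the definition of the tangent cone and the continuity and bilinearity of the inner product.
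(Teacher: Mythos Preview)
Your proof is correct. The paper states this theorem in its preliminaries section without proof, so there is nothing to compare against; your argument via the two inclusions, using convexity only to obtain $S-x\subseteq T(S,x)$, is the standard and cleanest way to establish the result.
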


\subsection{Partially Ordered Linear Spaces}
\begin{definition}
Let $X$ be a real linear space. Each nonempty subset $R$ of the product space $X\times X$ is called a binary relation on $X$, we write $xRy$ for $(x,y)\in R$. Every binary relation $\leq$ on $X$ is called a partial ordering on $X$, if the following axioms are satisfied for arbitrary $w,x,y,z\in X:$
\begin{enumerate}
    \item $x\leq x;$
    \item $x\leq y, y\leq z\implies x\leq z;$
    \item $x\leq y, w\leq z\implies x+w\leq y+z;$
    \item $x\leq y,\alpha \in \mathbb{R}_{+}\implies \alpha x\leq \alpha y.$
\end{enumerate}
What's more, a partial ordering $\leq$ on $X$ is called antisymmetric, if the following implication holds for arbitrary $x,y\in X:$
\begin{equation*}
    x\leq y,\:y\leq x \implies x = y.
\end{equation*}
\end{definition}

\begin{definition}
A real linear space equipped with a partial ordering is called partially ordered linear space.
\end{definition}

A significant characterization of a partial ordering in a linear space is given by the following theorem:
\begin{theorem}
Let $X$ be a real linear space. If $C$ is a convex cone in $X$, then the binary relation
\begin{equation*}
    \leq_{C} = \{ (x,y)\in X\times X\:\vert\:y-x\in C \}
\end{equation*}
is a partial ordering on $X$. If, in addition, $C$ is pointed, then $\leq_{C}$ is antisymmetric. 
\end{theorem}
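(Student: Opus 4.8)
The plan is to verify directly that $\leq_C$ satisfies each of the four defining axioms of a partial ordering, and then to establish antisymmetry under the additional pointedness hypothesis. The key algebraic fact I would isolate first is that a convex cone is closed under addition: if $u,v\in C$, then convexity gives $\tfrac12 u+\tfrac12 v\in C$, and rescaling by the factor $2\ge 0$ via the cone property yields $u+v\in C$. Together with the observation that $0\in C$ (apply the cone property with scalar $\alpha=0$ to any element of the nonempty set $C$), this single closure property will drive almost every step.

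First I would check reflexivity: $x\leq_C x$ amounts to $x-x=0\in C$, which holds by the remark above. For transitivity, if $x\leq_C y$ and $y\leq_C z$ then $y-x\in C$ and $z-y\in C$, so their sum $z-x=(z-y)+(y-x)$ lies in $C$ by closure under addition, giving $x\leq_C z$. The additive compatibility axiom is handled identically: from $y-x\in C$ and $z-w\in C$ one obtains $(y+z)-(x+w)=(y-x)+(z-w)\in C$, hence $x+w\leq_C y+z$. For the scalar axiom, $x\leq_C y$ means $y-x\in C$, and for $\alpha\ge 0$ the cone property gives $\alpha(y-x)=\alpha y-\alpha x\in C$, that is, $\alpha x\leq_C \alpha y$.

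Finally, for antisymmetry under the assumption that $C$ is pointed, I would suppose $x\leq_C y$ and $y\leq_C x$. These mean $y-x\in C$ and $x-y\in C$; since $x-y=-(y-x)$, it follows that $y-x\in C\cap(-C)=\{0\}$, forcing $y-x=0$ and thus $x=y$.

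There is no serious obstacle here: the proof is essentially a bookkeeping exercise translating each order axiom into a membership statement about $C$. The only step requiring a moment's thought is the closure of a convex cone under addition, since it genuinely combines the convexity and the positive-scaling properties rather than following from either one alone; everything else is a direct consequence of that fact and of the elementary identity $x-y=-(y-x)$.
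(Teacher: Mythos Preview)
Your proof is correct and complete. The paper itself does not supply a proof of this theorem; it merely remarks that the result ``is easy to prove,'' so your direct axiom-by-axiom verification---including the preliminary observation that a convex cone is closed under addition and contains $0$---is precisely what is required to fill the gap.
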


\begin{framed}
\noindent \emph{Remark:} This theorem is easy to prove and is of great importance because a partial ordering can be investigated using convex analysis.
\end{framed}

\begin{definition}
Let $X$ be a real linear space and $X^{\ast}$ denotes the linear space containing all continuous linear functionals on $X$. A convex cone characterizing a partial ordering in $X$ is called an ordering cone and we often denote it by $C_{X}$. Moreover, the dual cone of $C_{X}$ is defined as
\begin{equation*}
    C_{X^{\ast}} = \{x^{\ast} \in X^{\ast}\:\vert\:x^{\ast}(x)\geq 0, \forall x\in C_{X} \}.
\end{equation*}
\end{definition}

\begin{definition}
Let $X$ be a partially ordered linear space and $C_{X}$ is the ordering cone in $X$. For arbitrary elements $x,y\in X$ with $x\leq_{C} y$ the set
\begin{equation*}
    [x,y] = \{ z\in X\:\vert\:x\leq_{C}z\leq_{C}y \} 
\end{equation*}
is called the order interval between $x$ and $y$.
\end{definition}
\begin{framed}
\noindent \emph{Remark:} It's easy to prove that the order interval between $x$ and $y$ can be written as
\begin{equation*}
    [x,y] = (\{x\}+C_{X})\cap (\{y\}-C_{X}).
\end{equation*}
\end{framed}

\subsection{Basic Set-Valued Analysis}
In this part, we begin to make a brief introduction to set-valued analysis including semicontinuity, which will be used in the proof of Ekeland variational principle and Lipschitz optimization in \textsection5. For further properties of set-valued maps, we will present them in \textsection7. This part may refer to Aubin's book  \cite{aubin2009set}. 
\begin{center}
    \textbf{Basic Concepts}
\end{center}

\begin{definition}
Let $X$,$Y$ be real normed spaces. $F$ is called a set-valued map if for any $x\in X$ there exists a corresponding subset $F(x)\subseteq Y$, denoted by $x\rightrightarrows F(x)$ or $F:X\rightrightarrows Y$. The domain and image of $F(x)$ are denoted by $Dom(F)$ and $Im(F)$ respectively:
\begin{equation*}
    \begin{aligned}
    Dom(F) &= \left\{ x\in X \:\middle\vert\: F(x) \neq \varnothing \right\},\\
    Im(F) &= \bigcup_{x\in X}F(x).
    \end{aligned}
\end{equation*}
\end{definition}

\begin{theorem}
Assume that $F, F_{1}, F_{2}$ are set-valued maps from real normed space $X$ to real normed space $Y$ and $\lambda$ be constant. We define $(F_{1}\cap F_{2})$, $(F_{1}\cup F_{2})$, $(F_{1} + F_{2})$ and $\lambda F$ as follows:
\begin{equation*}
    \begin{aligned}
    (F_{1}\cap F_{2})(x) &= F_{1}(x)\cap F_{2}(x),\quad x\in X\\
    (F_{1}\cup F_{2})(x) &= F_{1}(x)\cup F_{2}(x),\quad x\in X\\
    (F_{1} + F_{2})(x) &= F_{1}(x) + F_{2}(x),\quad x\in X\\
    \lambda F(x) &= \left \{ \lambda y\:\middle\vert\: y\in F(x)\right\},\quad x \in X.
    \end{aligned}
\end{equation*}
\end{theorem}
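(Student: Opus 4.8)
The plan is to recognize first that the statement above is definitional rather than assertive: it introduces four set-theoretic operations on set-valued maps and claims nothing beyond the displayed formulas. Consequently the only genuine proof obligation is \emph{well-definedness} --- namely that each of $F_{1}\cap F_{2}$, $F_{1}\cup F_{2}$, $F_{1}+F_{2}$ and $\lambda F$ is again a set-valued map from $X$ to $Y$ in the sense of Definition 2.5.1. Since that definition requires only that to each $x\in X$ there be associated a (possibly empty) subset of $Y$, the verification reduces to checking, operation by operation, that the right-hand sides are honest subsets of $Y$ for every $x$.

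First I would dispose of the intersection and union. Because $F_{1}(x)\subseteq Y$ and $F_{2}(x)\subseteq Y$ for every $x\in X$ by hypothesis, both $F_{1}(x)\cap F_{2}(x)$ and $F_{1}(x)\cup F_{2}(x)$ are subsets of $Y$, so the assignments $x\rightrightarrows F_{1}(x)\cap F_{2}(x)$ and $x\rightrightarrows F_{1}(x)\cup F_{2}(x)$ meet the defining requirement. Here I would stress that Definition 2.5.1 admits the empty set as a value, so no difficulty arises when $F_{1}(x)\cap F_{2}(x)=\varnothing$; at the level of domains one merely has the inclusion $Dom(F_{1}\cap F_{2})\subseteq Dom(F_{1})\cap Dom(F_{2})$ and the equality $Dom(F_{1}\cup F_{2})=Dom(F_{1})\cup Dom(F_{2})$, although the statement asserts neither. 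Next I would treat the two algebraic operations, where the linear structure of the target enters: the Minkowski sum $(F_{1}+F_{2})(x)=\{\,y_{1}+y_{2}\:\vert\:y_{1}\in F_{1}(x),\,y_{2}\in F_{2}(x)\,\}$ is a subset of $Y$ because $Y$ is closed under addition, and $\lambda F(x)=\{\,\lambda y\:\vert\:y\in F(x)\,\}$ is a subset of $Y$ because $Y$ is closed under scaling by $\lambda$. In each case the value is empty exactly when a constituent value is (and, for the multiple with $\lambda\neq 0$, exactly when $F(x)=\varnothing$), which is again permitted.

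I do not anticipate any real obstacle, since the content is purely formal: no continuity, convexity, or closedness of the values $F(x)$ is asserted, and the normed-space hypothesis is invoked solely to make addition and scalar multiplication of subsets meaningful. The one point deserving explicit emphasis --- more a convention to invoke than a difficulty to overcome --- is that $\varnothing$ is an admissible value of a set-valued map; this is precisely what keeps all four operations \emph{total} on $X$ and hence legitimate as maps $X\rightrightarrows Y$.
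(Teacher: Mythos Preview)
Your reading is correct: the statement is purely definitional, and the paper itself offers no proof --- it simply records the four formulas and moves on. Your observation that the only obligation is well-definedness (each right-hand side being a subset of $Y$, with $\varnothing$ admissible) is exactly the right way to dispatch it, and matches the paper's implicit treatment.
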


\begin{center}
    \textbf{Semicontinuity of Set-Valued Analysis}
\end{center}
\begin{definition}
(Upper Semicontinuous)
Let $X,Y$ be real normed spaces and $F:X\rightrightarrows Y$ maps from $X$ to $Y$. Given $x_{0}\in Dom(F)$, if for any neighborhood $U$ of $F(x_{0})$ there exists $\delta > 0$ such that
\begin{align*}
    F(x) \subseteq U, \quad \forall x\in B(x_{0},\delta).
\end{align*}
We called that $F$ is upper semicontinuous at $x_{0}$. If $F$ is upper semicontinuous at each point of $Dom(F)$, then $F$ is upper semicontinuous at $X$.
\end{definition}

\begin{definition}
(Lower Semicontinuous)
Let $X,Y$ be real normed spaces. Let $F$ be a set-valued map from $X$ to $Y$. Given $x_{0}\in Dom(F)$, if for any $y_{0}\in F(x_{0})$ and sequence $\{x_{k}\}$ in $Dom(F)$ satisfying $x_{k}\to x_{0}(k\to \infty)$, there exists $\{y_{k}\}\in F(x_{k})$ in $Y$ such that $y_{k}\to y_{0}(k\to \infty)$. We say that $F$ is lower semicontinuous at $x_{0}$. If $F$ is lower semicontinuous at each point of $Dom(F)$, then we say that $F$ is lower  semicontinuous at $X$.
\end{definition}

\begin{theorem}
Let $X,Y$ be real normed spaces and $F$ maps from $X$ to $Y$. Given $x_{0}\in Dom(F)$. If $F(x)$ is compact, upper semicontinuity and lower semicontinuity of set-valued maps can be characterized in following ways:
\begin{itemize}
    \item Upper Semicontinuous: For any $\varepsilon > 0$, there exists a  constant $\delta$ such that
    \begin{align*}
        F(x) \subseteq F(x_{0}) + B(0,\varepsilon),\quad \forall x\in B(x_{0},\delta)
    \end{align*}
    iff $F:X\rightrightarrows Y$ is upper semicontinuous.
    
    \item Lower Semicontinuous: For any $\varepsilon > 0$, there exists a  constant $\delta$ such that
    \begin{align*}
        F(x_{0}) \subseteq F(x) + B(0,\varepsilon),\quad \forall x\in B(x_{0},\delta)
    \end{align*}
    iff $F:X\rightrightarrows Y$ is lower semicontinuous.
\end{itemize}
\end{theorem}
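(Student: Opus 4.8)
The plan is to establish the two equivalences separately, in each case showing that the stated $\varepsilon$-$\delta$ estimate and the relevant semicontinuity definition imply one another. For both notions, one implication is a direct unwinding of definitions, while the reverse implication is where the compactness of $F(x_0)$ does the genuine work. Throughout I will use that a Minkowski sum $S + B(0,\varepsilon)$ with an open ball is open.

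For the upper semicontinuous case I would first treat the easy direction: the set $F(x_0) + B(0,\varepsilon)$ is an open neighborhood of $F(x_0)$, so if $F$ is upper semicontinuous in the sense of the definition, the defining property applied to this particular neighborhood immediately yields a $\delta > 0$ with $F(x) \subseteq F(x_0) + B(0,\varepsilon)$ on $B(x_0,\delta)$; no compactness is needed here. For the converse, given an arbitrary open neighborhood $U$ of $F(x_0)$, I would invoke compactness to produce an $\varepsilon > 0$ with $F(x_0) + B(0,\varepsilon) \subseteq U$: cover $F(x_0)$ by balls $B(s,r_s)$ chosen so that $B(s,2r_s) \subseteq U$, extract a finite subcover by compactness, and take $\varepsilon$ to be the minimum of the finitely many radii $r_s$. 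Feeding this $\varepsilon$ into the $\varepsilon$-$\delta$ estimate gives a $\delta$ with $F(x) \subseteq F(x_0) + B(0,\varepsilon) \subseteq U$ on $B(x_0,\delta)$, which is exactly upper semicontinuity.

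For the lower semicontinuous case I would first show that the $\varepsilon$-$\delta$ estimate implies the sequential definition. Fix $y_0 \in F(x_0)$ and a sequence $x_k \to x_0$. For each $n$ the estimate with $\varepsilon = 1/n$ supplies a radius $\delta_n$; choosing indices so that $x_k \in B(x_0,\delta_n)$ for all large $k$, the inclusion $y_0 \in F(x_0) \subseteq F(x_k) + B(0,1/n)$ lets me select $y_k \in F(x_k)$ with $\|y_k - y_0\| < 1/n$, so $y_k \to y_0$. For the reverse I would argue by contradiction: if the estimate failed there would exist $\varepsilon > 0$, a sequence $x_k \to x_0$, and points $y_k \in F(x_0)$ with $\mathrm{dist}(y_k, F(x_k)) \geq \varepsilon$. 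Compactness of $F(x_0)$ yields a subsequence $y_{k_j} \to y^\ast \in F(x_0)$; applying lower semicontinuity to $y^\ast$ along the subsequence $\{x_{k_j}\}$ produces $z_j \in F(x_{k_j})$ with $z_j \to y^\ast$, whence $\|y_{k_j} - z_j\| < \varepsilon$ for large $j$, contradicting $\mathrm{dist}(y_{k_j}, F(x_{k_j})) \geq \varepsilon$.

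The main obstacle is the precise role of compactness, which enters exactly in the two nontrivial directions. In the upper semicontinuous case it is needed to pass from an arbitrary open neighborhood to a uniform $\varepsilon$-tube, since without compactness one cannot in general fit an $\varepsilon$-neighborhood of $F(x_0)$ inside a given $U$. In the lower semicontinuous case it is needed to extract the convergent subsequence $y_{k_j}$, the contradiction hypothesis only guaranteeing that the points $y_k$ lie in the compact set $F(x_0)$. Everything else reduces to routine manipulation of the definitions and the triangle inequality.
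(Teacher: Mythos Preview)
The paper does not supply a proof for this theorem: it appears in the preliminaries section (\S2.5), and the introduction explicitly says of this section that ``these elementary definitions and theorems will be presented directly,'' referring the reader to Aubin's book for details. So there is no in-paper argument to compare against.

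Your proposal is correct and is the standard way to establish these characterizations. The identification of where compactness of $F(x_0)$ is genuinely needed---to fit an $\varepsilon$-tube inside an arbitrary open neighborhood in the upper semicontinuous direction, and to extract a convergent subsequence of the $y_k$ in the lower semicontinuous direction---is exactly right, and the remaining steps are routine. One small point worth tightening: in the contradiction argument for lower semicontinuity, the sequential definition in the paper restricts to sequences $\{x_k\}$ lying in $\operatorname{Dom}(F)$, whereas the $\varepsilon$-$\delta$ statement quantifies over all $x\in B(x_0,\delta)$; you should either note that the counterexample sequence can be taken in $\operatorname{Dom}(F)$ (since otherwise $F(x_k)=\varnothing$ and the inclusion fails trivially, forcing $x_0\notin\operatorname{int}\,\operatorname{Dom}(F)$, a case usually excluded implicitly), or flag this as a mild domain assumption. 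This is a standard ambiguity in the literature rather than a defect in your argument.
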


\section{Classical Karush-Kuhn-Tucker Conditions}
In this section, we will present rigorous derivation of Karush-Kuhn-Tucker condition and Fritz John condition using variational geometry method. What's more, a brief introduction of constraint qualifications will be presented in \textsection 3.3. Unless otherwise mentioned, the differentiability here refer to Fr$\Acute{e}$chet  differentiability.

\subsection{Classical KKT Condition}
We first simplify the optimization problem \text{(P1)} mentioned in \textsection 1.1, given function $f:\mathbb{R}^n\mapsto \left( -\infty,+\infty \right]$ and subset $S\subseteq \mathbb{R}^n$, then
\begin{equation*}
\begin{aligned}
\min \quad & f(x)\\
\textrm{s.t.} \quad & x\in S
\end{aligned}
\tag{$\dagger$}
\end{equation*}

\begin{theorem}
Assume $f:\mathbb{R}^n\mapsto \left( -\infty,+\infty \right]$ is differentiable. If f attains local optimal solution of problem $\mathrm{(\dagger)}$ at $x^{\ast}$, then
\begin{equation}
    -\nabla f(x^{\ast}) \in N(S,x^{\ast}).
\end{equation}
\end{theorem}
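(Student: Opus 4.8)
The plan is to unfold the definition of the normal cone so that proving $-\nabla f(x^{\ast}) \in N(S,x^{\ast})$ reduces to a single scalar inequality. By definition, $N(S,x^{\ast})$ is the polar cone of the Bouligand tangent cone $T(S,x^{\ast})$, so $-\nabla f(x^{\ast}) \in N(S,x^{\ast}) = T(S,x^{\ast})^{\ast}$ is equivalent, via the definition of the polar cone, to the requirement that $\langle -\nabla f(x^{\ast}), y\rangle \le 0$, i.e. $\langle \nabla f(x^{\ast}), y\rangle \ge 0$, for every $y \in T(S,x^{\ast})$. This is exactly what I would aim to establish.

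First I would fix an arbitrary $y \in T(S,x^{\ast})$. By the definition of the contingent cone there exist sequences $x^{k}\in S$ with $x^{k}\to x^{\ast}$ and scalars $\alpha_{k}\ge 0$ such that $\alpha_{k}(x^{k}-x^{\ast})\to y$. The case $y=0$ is trivial, so I assume $y\neq 0$; then, after discarding finitely many terms, I may assume $x^{k}\neq x^{\ast}$ for all $k$. Now I would invoke local optimality together with Fréchet differentiability: since $x^{\ast}$ is a local minimizer over $S$ and $x^{k}\in S$ converges to $x^{\ast}$, for all sufficiently large $k$ we have $f(x^{k})-f(x^{\ast})\ge 0$. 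Writing the Fréchet expansion
\[
f(x^{k})-f(x^{\ast}) = \langle \nabla f(x^{\ast}), x^{k}-x^{\ast}\rangle + r_{k},\qquad r_{k}=o(\|x^{k}-x^{\ast}\|),
\]
and multiplying by $\alpha_{k}\ge 0$ (which preserves the inequality) gives
\[
0 \le \alpha_{k}\bigl(f(x^{k})-f(x^{\ast})\bigr) = \langle \nabla f(x^{\ast}), \alpha_{k}(x^{k}-x^{\ast})\rangle + \alpha_{k} r_{k}.
\]

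The heart of the argument, and the step needing the most care, is passing to the limit in the remainder term $\alpha_{k}r_{k}$. I would factor it as $\alpha_{k}r_{k} = \bigl(\alpha_{k}\|x^{k}-x^{\ast}\|\bigr)\cdot \dfrac{r_{k}}{\|x^{k}-x^{\ast}\|}$. The first factor equals $\|\alpha_{k}(x^{k}-x^{\ast})\|$, which converges to $\|y\|$ and is therefore bounded, while the second factor tends to $0$ by the definition of the Fréchet derivative; hence $\alpha_{k}r_{k}\to 0$. Letting $k\to\infty$ in the displayed inequality and using continuity of the inner product together with $\alpha_{k}(x^{k}-x^{\ast})\to y$ yields $0 \le \langle \nabla f(x^{\ast}), y\rangle$. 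Since $y\in T(S,x^{\ast})$ was arbitrary, this shows $-\nabla f(x^{\ast})\in T(S,x^{\ast})^{\ast}=N(S,x^{\ast})$, as desired. The only subtleties to handle cleanly are the degenerate cases $y=0$ and $x^{k}=x^{\ast}$ (where the factorization above is vacuous but the corresponding terms vanish), and the fact that local optimality only guarantees $f(x^{k})\ge f(x^{\ast})$ for large $k$, which is harmless since we are taking a limit.
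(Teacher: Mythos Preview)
Your proposal is correct and follows essentially the same approach as the paper's own proof: fix $y\in T(S,x^{\ast})$, choose sequences $x^{k}\to x^{\ast}$ and $\alpha_{k}\ge 0$ with $\alpha_{k}(x^{k}-x^{\ast})\to y$, expand $f(x^{k})-f(x^{\ast})$ via Fr\'echet differentiability, use local optimality to get nonnegativity for large $k$, multiply by $\alpha_{k}$, and pass to the limit after factoring the remainder as $\alpha_{k}\|x^{k}-x^{\ast}\|\cdot o(\|x^{k}-x^{\ast}\|)/\|x^{k}-x^{\ast}\|$. If anything, your write-up is slightly more careful than the paper's in explicitly disposing of the degenerate cases $y=0$ and $x^{k}=x^{\ast}$.
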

\begin{proof}
$\forall y\in T(S,x^{\ast})$, it follows from the definition of tangent vector that there exists sequences $\{x^k\}$ and nonnegative numerical sequence $\{\alpha_k\}$ satisfying $\alpha_k(x^k-x^{\ast})\to y$. Since $f$ is Fr$\Acute{e}$chet differentiable, thus from Definition 2.1.5 we have 
\begin{equation}
    f(x^k)-f(x^{\ast}) = \langle \nabla f(x^{\ast}),x^k-x^{\ast} \rangle + o(\parallel x^k-x^{\ast} \parallel).
\end{equation}
Note that $f$ attains its local optimal solution at $x^{\ast}$, thus $f(x^k) \geq f(x^{\ast})$ for sufficiently large $k$. By equality\textrm{(2)} above, we have
\begin{equation*}
    \langle \nabla f(x^{\ast}),\alpha_k(x^k-x^{\ast}) \rangle + \frac{o(\parallel x^k-x^{\ast} \parallel)}{\parallel x^k-x^{\ast} \parallel}\times \alpha_k \parallel x^k-x^{\ast} \parallel\:\geq 0.
\end{equation*}
Let $k\to \infty$ so that $\langle \nabla f(x^{\ast}),y \rangle + 0 \times \parallel y \parallel\:\geq 0$, namely $\langle -\nabla f(x^{\ast}),y \rangle \leq 0$. It's easy to obtain that $-\nabla f(x^{\ast})\in N(S,x^{\ast})$ since $y\in T(S,x^{\ast})$. 
\end{proof}

When the feasible region of problem \textrm{($\dagger$)} is expressed by a collection of functions $g_{i}:\mathbb{R}^n\mapsto \mathbb{R}$ as follows:
\begin{equation}
    S = \{ x\in \mathbb{R}^n\:\vert\:g_{i}(x) \leq 0,\quad i = 1,\dots,m \},
\end{equation}
then problem \textrm{($\dagger$)} can be written as
\begin{equation*}
\begin{aligned}
\min \quad & f(x)\\
\textrm{s.t.} \quad & g_{i}(x) \leq 0, \quad i = 1,\dots,m
\end{aligned}
\tag{P2}
\end{equation*}
The constraints satisfying $g_{i}(x) = 0$ are called active constraints at $x$, and corresponding index set denoted by $I(x) = \{i\:\vert\:g_{i}(x) = 0\} \subseteq \{1,\dots,m\}$. In \textsection 2, we define the linear approximation of $S$ at $x$ called Bouligand tangent cone, here we again define another linear approximation of $S$.
\begin{definition}
Under the assumption that each $g_{i}(x)$ is Fr$\Acute{e}$chet differentiable at $x$ and $S$ can be expressed in the formula\textrm{(3)}. The cone 
\begin{equation}
    C(S,x) = \left\{ y\in \mathbb{R}^n\:\vert\: \langle \nabla g_{i}(x),y \rangle \leq 0,\quad i\in I(x)\right\}
\end{equation}
is called linearizing cone of $S$ at $x$.
\end{definition}
\begin{framed}
\noindent \emph{Remark:} It's a  fact that $T(S,x)\subseteq C(S,x)$ always holds but not vice versa, which can be refered to Masao Fukushima's book \cite{fukushima2011fundamentals}.
\end{framed}

\begin{theorem}
\textbf{(KKT Condition)} Assume that $x^{\ast}$ is a local optimal solution of problem $\mathrm{(P2)}$, objective function $f:\mathbb{R}^n \mapsto \mathbb{R}$ and constraint functions $g_{i}:\mathbb{R}^n \mapsto \mathbb{R}(i = 1,\dots,m)$ are all differentiable at $x^{\ast}$. If $C(S,x^{\ast})\subseteq co\,T(S,x^{\ast})$ holds, then exists $\overline{\lambda} \in \mathbb{R}^m$ satisfying 
\begin{equation}
    \begin{aligned}
    &\nabla f(x^{\ast})+\sum_{i=1}^m \overline{\lambda_{i}} \nabla g_{i}(x^{\ast}) = 0,\\
    &\overline{\lambda_{i}} \geq 0,\:\overline{\lambda_{i}}g_{i}(x^{\ast}) = 0,\: i = 1,\dots,m
    \end{aligned}
\end{equation}
\end{theorem}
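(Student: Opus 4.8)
The plan is to reduce the statement to the geometric optimality condition of Theorem 3.1.1 and then convert the resulting cone inclusion into multipliers via the Farkas-type result of Theorem 2.3.3. Since $x^{\ast}$ is a local optimal solution of $(\dagger)$ with feasible set $S$ given by (3), Theorem 3.1.1 immediately yields
\[
-\nabla f(x^{\ast}) \in N(S,x^{\ast}) = T(S,x^{\ast})^{\ast},
\]
where the equality is just Definition 2.3.5 (the normal cone is the polar of the tangent cone). The whole difficulty is therefore to replace the tangent cone $T(S,x^{\ast})$, which is hard to compute directly, by the linearizing cone $C(S,x^{\ast})$, whose polar can be evaluated explicitly.

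First I would exploit the constraint qualification. The Remark following Definition 3.1.1 supplies the always-valid inclusion $T(S,x^{\ast}) \subseteq C(S,x^{\ast})$, while the hypothesis gives $C(S,x^{\ast}) \subseteq \mathrm{co}\, T(S,x^{\ast})$, so altogether
\[
T(S,x^{\ast}) \subseteq C(S,x^{\ast}) \subseteq \mathrm{co}\, T(S,x^{\ast}).
\]
Taking polars reverses all inclusions (Theorem 2.3.1), and since the same theorem gives $A^{\ast} = (\mathrm{co}\, A)^{\ast}$, the polars of the two extreme cones already coincide; the middle polar is squeezed between them and must equal both. Hence
\[
N(S,x^{\ast}) = T(S,x^{\ast})^{\ast} = C(S,x^{\ast})^{\ast}.
\]
This identity is the crux of the argument: the constraint qualification is precisely what forces the polar of the computable linearizing cone to coincide with the normal cone, and I expect the careful bookkeeping of the inclusion reversals together with the convex-hull identity to be the main obstacle.

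With this reduction in hand, $-\nabla f(x^{\ast}) \in C(S,x^{\ast})^{\ast}$. By Definition 3.1.1 the linearizing cone $C(S,x^{\ast})$ is exactly the cone $K$ of Theorem 2.3.3 built from the active gradients $a^{i} = \nabla g_{i}(x^{\ast})$, $i \in I(x^{\ast})$. That theorem identifies its polar with the finitely generated cone
\[
C(S,x^{\ast})^{\ast} = \left\{ \sum_{i\in I(x^{\ast})} \alpha_{i}\, \nabla g_{i}(x^{\ast}) \:\middle\vert\: \alpha_{i} \geq 0 \right\},
\]
so there exist scalars $\alpha_{i} \geq 0$ ($i \in I(x^{\ast})$) with $-\nabla f(x^{\ast}) = \sum_{i\in I(x^{\ast})} \alpha_{i} \nabla g_{i}(x^{\ast})$.

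Finally I would assemble the multiplier vector $\overline{\lambda} \in \mathbb{R}^{m}$ by setting $\overline{\lambda_{i}} = \alpha_{i}$ for $i \in I(x^{\ast})$ and $\overline{\lambda_{i}} = 0$ otherwise. The stationarity equation in (5) then holds because the inactive terms vanish, nonnegativity $\overline{\lambda_{i}} \geq 0$ is immediate, and complementary slackness $\overline{\lambda_{i}} g_{i}(x^{\ast}) = 0$ follows term by term: for active indices $g_{i}(x^{\ast}) = 0$, while for inactive indices $\overline{\lambda_{i}} = 0$. This verifies all of (5) and completes the argument.
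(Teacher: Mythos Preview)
Your proposal is correct and follows essentially the same route as the paper: invoke Theorem 3.1.1 to get $-\nabla f(x^{\ast})\in N(S,x^{\ast})$, pass to $C(S,x^{\ast})^{\ast}$ via polar-cone monotonicity and the identity $(\mathrm{co}\,A)^{\ast}=A^{\ast}$, then apply Theorem 2.3.3 and zero out the inactive multipliers. The only cosmetic difference is that you squeeze to obtain the equality $N(S,x^{\ast})=C(S,x^{\ast})^{\ast}$, whereas the paper uses only the one-sided inclusion $N(S,x^{\ast})\subseteq C(S,x^{\ast})^{\ast}$, which is all that is needed.
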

\begin{proof}
Since $x^{\ast}$ is the local optimal solution of problem \textrm{(P2)}, thus $-\nabla f(x^{\ast})\in N(S,x^{\ast})$ according to Theorem 3.1.1. We deduce from Theorem 2.3.1 that 
\begin{align*}
    C(S,x^{\ast})^{\ast} \supseteq co\,T(S,x^{\ast})^{\ast} = T(S,x^{\ast})^{\ast} = N(S,x^{\ast}),
\end{align*}
hence $-\nabla f(x^{\ast})\in C(S,x^{\ast})^{\ast}$. It follows from the definition of $C(S,x^{\ast})$ and Theorem 2.3.3 there exists $\overline{\lambda_{i}}\geq 0(i\in I(x^{\ast}))$ satisfying
\begin{align*}
    -\nabla f(x^{\ast}) = \sum\limits_{i\in I(x^{\ast})} \overline{\lambda_{i}}\nabla g_{i}(x^{\ast}).
\end{align*}
Let $\overline{\lambda_{i}} = 0(i\notin I(x^{\ast}))$ then yield the desired result.
\end{proof}

Now we consider problem \textrm{(P1)}. The following corollary is a generalization of Theorem 3.1.2 under additional equality constraints. The index set is also defined by $I(x) = \{i\:\vert\:g_{i}(x) = 0\} \subseteq \{1,\dots,m\}$. Now we define feasible region $S$ as follows:
\begin{equation*}
    S = \left\{ x\in \mathbb{R}^n \:\middle\vert\: g_{i}(x) \leq 0,i = 1,\dots,m, h_{j}(x) = 0,j = 1,\dots,l\right\}.
\end{equation*}
The tangent cone of $S$ at $x$ is denoted by $T(S,x)$, and linearizing cone $C(S,x)$ can be expressed as
{\normalsize \begin{equation*}
    C(S,x) = \left\{ y\in \mathbb{R}^n \:\middle\vert\: \langle\nabla g_{i}(x),y \rangle \leq 0,i\in I(x),\langle \nabla h_{j}(x),y \rangle = 0,j = 1,\dots,l \right\}.
\end{equation*}}

\begin{corollary}
\textbf{(KKT Condition)} Let $x^{\ast}$ be a local optimal of problem $\mathrm{(P1)}$, objective function $f:\mathbb{R}^n\mapsto \mathbb{R}$ and constraint functions $g_{i}:\mathbb{R}^n\mapsto \mathbb{R}(i = 1,\dots,m)$, $h_{j}:\mathbb{R}^n\mapsto \mathbb{R}(j = 1,\dots,n)$ are all differentiable at $x^{\ast}$. If $C(S,x^{\ast})\subseteq co\,T(S,x^{\ast})$, there exists $\overline{\lambda}\in \mathbb{R}^m$, $\overline{\mu}\in \mathbb{R}^n$ satisfying
\begin{equation}
\begin{aligned}
&\nabla f(x^{\ast}) + \sum_{i=1}^m \overline{\lambda_{i}} \nabla g_{i}(x^{\ast}) + \sum_{j=1}^n \overline{\mu_{j}} \nabla h_{j}(x^{\ast}) = 0,\\
&\overline{\lambda_{i}} \geq 0,\: \overline{\lambda_{i}}g_{i}(x^{\ast}) = 0,\: i = 1,\dots,m
\end{aligned}
\end{equation}
\end{corollary}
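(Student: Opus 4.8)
The plan is to follow the proof of Theorem 3.1.2 almost verbatim in its opening moves, replacing the pure-inequality Farkas theorem by its mixed-constraint counterpart (Corollary 2.3.1) at the decisive step. First I would invoke Theorem 3.1.1: since $x^{\ast}$ is a local optimal solution of (P1) with feasible set $S$, we have $-\nabla f(x^{\ast})\in N(S,x^{\ast})$. Recalling that $N(S,x^{\ast})=T(S,x^{\ast})^{\ast}$ by definition of the normal cone, I would then exploit the constraint qualification $C(S,x^{\ast})\subseteq co\,T(S,x^{\ast})$ together with Theorem 2.3.1 (monotonicity of polarity and the identity $C^{\ast}=(co\,C)^{\ast}$) to obtain the chain
\[
C(S,x^{\ast})^{\ast}\supseteq (co\,T(S,x^{\ast}))^{\ast}=T(S,x^{\ast})^{\ast}=N(S,x^{\ast}),
\]
whence $-\nabla f(x^{\ast})\in C(S,x^{\ast})^{\ast}$. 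This is where the constraint qualification is indispensable: without the inclusion $C(S,x^{\ast})\subseteq co\,T(S,x^{\ast})$ there is no way to transfer membership from the genuine normal cone to the polar of the linearizing cone.

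The heart of the argument, and the only place where the equality constraints force a departure from Theorem 3.1.2, is the explicit computation of $C(S,x^{\ast})^{\ast}$. Here $C(S,x^{\ast})$ is precisely the cone $K$ of Corollary 2.3.1 built from the vectors $a^{i}=\nabla g_{i}(x^{\ast})$ for $i\in I(x^{\ast})$ and $b^{j}=\nabla h_{j}(x^{\ast})$ for $j=1,\dots,l$. Applying that corollary gives $C(S,x^{\ast})^{\ast}=K^{\ast}$ equal to the generated cone, so the membership $-\nabla f(x^{\ast})\in C(S,x^{\ast})^{\ast}$ yields scalars $\alpha_{i}\geq 0$ $(i\in I(x^{\ast}))$ and $\beta_{j}\in\mathbb{R}$ $(j=1,\dots,l)$ with
\[
-\nabla f(x^{\ast})=\sum_{i\in I(x^{\ast})}\alpha_{i}\nabla g_{i}(x^{\ast})+\sum_{j=1}^{l}\beta_{j}\nabla h_{j}(x^{\ast}).
\]
The crucial feature supplied by Corollary 2.3.1, as opposed to Theorem 2.3.3 used in the inequality-only case, is that the equality gradients contribute free-sign multipliers $\beta_{j}$; this is exactly why the conclusion imposes a nonnegativity restriction on $\overline{\lambda}$ but none on $\overline{\mu}$.

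To finish, I would set $\overline{\lambda_{i}}=\alpha_{i}$ for $i\in I(x^{\ast})$ and $\overline{\lambda_{i}}=0$ for $i\notin I(x^{\ast})$, and $\overline{\mu_{j}}=\beta_{j}$ for $j=1,\dots,l$; rearranging the displayed identity then produces the stationarity equation in the statement. The complementarity condition $\overline{\lambda_{i}}g_{i}(x^{\ast})=0$ holds automatically, since $g_{i}(x^{\ast})=0$ for active indices $i\in I(x^{\ast})$ while $\overline{\lambda_{i}}=0$ for the inactive ones. The one point demanding care, rather than a genuine obstacle, is that forming $C(S,x^{\ast})$ from only the active inequality gradients is legitimate; this is already built into Definition 3.1.1, so no separate justification is needed. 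I do not expect any serious difficulty here, as the entire argument is a structural extension of Theorem 3.1.2 in which the stronger Farkas-type lemma does the extra work.
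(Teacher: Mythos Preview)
Your proposal is correct and follows essentially the same approach as the paper's own proof: both arguments establish $-\nabla f(x^{\ast})\in C(S,x^{\ast})^{\ast}$ via Theorem 3.1.1 and the constraint qualification, then invoke Corollary 2.3.1 in place of Theorem 2.3.3 to extract the mixed-sign multipliers, and finally set $\overline{\lambda_{i}}=0$ for inactive indices. The only difference is that you spell out explicitly the polar-cone chain that the paper compresses into ``It is not difficult to see that $-\nabla f(x^{\ast})\in C(S,x^{\ast})^{\ast}$ also holds.''
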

\begin{proof}
It is not difficult to see that $-\nabla f(x^{\ast})\in C(S,x^{\ast})^{\ast}$ also holds if $C(S,x^{\ast})\subseteq co\,T(S,x^{\ast})$. It follows from Corollary 2.3.1 that there exists $\lambda_{i} \geq 0 (i\in I(x))$ and $\overline{\mu_{j}} (j = 1,\dots,n)$ satisfying
\begin{equation*}
    -\nabla f(x^{\ast}) = \sum_{i=1}^m \overline{\lambda_{i}} \nabla g_{i}(x^{\ast}) + \sum_{j=1}^n \overline{\mu_{j}} \nabla h_{j}(x^{\ast})
\end{equation*}
for those $i\notin I(x^{\ast})$ let $\overline{\lambda_{i}} = 0$ hence establishes the desired result.
\end{proof}

\subsection{Classical Fritz John Condition}
In the proof of classical Karush-Kuhn-Tucker condition, we note that  the condition $C(S,x)\subseteq co\,T(S,x)$ must be satisfied, which is called constraint qualification in constrainted optimization and will be discussed later. In this part, Fritz John condition will be obtained directly without any constraint qualification.

\begin{theorem}
\textbf{(Fritz John Condition)} Let $x^{\ast}$ be a local optimal solution of problem $\mathrm{(P2)}$, objective function $f:\mathbb{R}^n \mapsto \mathbb{R}$ and constraint functions $g_{i}:\mathbb{R}^n \mapsto \mathbb{R}(i = 1,\dots,m)$ are all differentiable at $x^{\ast}$. There exist $\overline{\lambda_{0}}, \overline{\lambda_{1}}, \dots, \overline{\lambda_{m}}$ s.t. $\sum_{i=0}^m \overline{\lambda_{i}} = 1$ satisfying
\begin{equation}
\begin{aligned}
&\overline{\lambda_{0}}\nabla f(x^{\ast}) + \sum_{i=1}^m \overline{\lambda_{i}} \nabla g_{i}(x^{\ast}) = 0,\\
&\overline{\lambda_{i}} \geq 0,\: \overline{\lambda_{i}}g_{i}(x^{\ast}) = 0,\: i = 1,\dots,m 
\end{aligned}
\end{equation}
\end{theorem}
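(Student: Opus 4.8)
The plan is to recast the Fritz John condition as the purely geometric assertion that the origin lies in the convex hull of the objective gradient together with the active constraint gradients, and then to establish that membership by a separation argument powered by local optimality. Concretely, I would set $A = \{\nabla f(x^{\ast})\} \cup \{\nabla g_{i}(x^{\ast}) : i \in I(x^{\ast})\}$, a finite and hence compact subset of $\mathbb{R}^n$, and observe that the conclusion is equivalent to $0 \in co\,A$. Indeed, any representation $0 = \overline{\lambda_{0}}\nabla f(x^{\ast}) + \sum_{i\in I(x^{\ast})} \overline{\lambda_{i}}\nabla g_{i}(x^{\ast})$ with nonnegative coefficients summing to $1$ yields the claimed multipliers once I set $\overline{\lambda_{i}} = 0$ for $i \notin I(x^{\ast})$; the complementary slackness $\overline{\lambda_{i}} g_{i}(x^{\ast}) = 0$ then holds automatically, since active indices have $g_{i}(x^{\ast}) = 0$ while inactive ones have $\overline{\lambda_{i}} = 0$, and the normalization $\sum_{i=0}^m \overline{\lambda_{i}} = 1$ is built in. This route complements the normal-cone derivation of Theorem 3.1.1 and, crucially, dispenses with the constraint qualification $C(S,x^{\ast})\subseteq co\,T(S,x^{\ast})$.

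First I would argue by contradiction, assuming $0 \notin co\,A$. Since $A$ is finite, $co\,A$ is a nonempty compact convex set not containing the origin, so the separation theorem furnishes a vector $y \in \mathbb{R}^n$ with $\langle a, y\rangle > 0$ for every $a \in A$. Setting $d = -y$, this reads $\langle \nabla f(x^{\ast}), d\rangle < 0$ together with $\langle \nabla g_{i}(x^{\ast}), d\rangle < 0$ for all $i \in I(x^{\ast})$; in other words, $d$ is simultaneously a first-order descent direction for the objective and for every active constraint. (This equivalence between the infeasibility of the strict system and the existence of nontrivial nonnegative multipliers is precisely the finite-dimensional content of Gordan's theorem, which I am deriving here on the fly rather than invoking as a black box.)

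Next I would show that such a $d$ contradicts the local optimality of $x^{\ast}$ by inspecting the path $x(t) = x^{\ast} + td$ for small $t > 0$. The first-order expansion from Definition 2.1.5 gives $f(x(t)) = f(x^{\ast}) + t\langle \nabla f(x^{\ast}), d\rangle + o(t)$, which is strictly below $f(x^{\ast})$ for $t$ small. For feasibility, each active constraint satisfies $g_{i}(x(t)) = t\langle \nabla g_{i}(x^{\ast}), d\rangle + o(t) < 0$ for small $t$, while each inactive constraint has $g_{i}(x^{\ast}) < 0$ and therefore stays negative for small $t$ by continuity of $g_{i}$ (differentiability at $x^{\ast}$ forces continuity there). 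Because $I(x^{\ast})$ and the full index set are finite, a single threshold on $t$ can be chosen to enforce all of these inequalities at once, so $x(t)$ is feasible with strictly smaller objective value, contradicting optimality. Hence $0 \in co\,A$, and the multipliers described above exist.

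I expect the delicate point to be guaranteeing that the separating functional produces \emph{strict} inequalities rather than merely nonstrict ones, which is exactly where the compactness of $co\,A$ (from finiteness of the gradient set) is indispensable; with only nonstrict inequalities the descent-direction argument would collapse, mirroring the way the KKT derivation fails without a constraint qualification. The remaining steps, namely the uniform choice of $t$ and the bookkeeping of the inactive multipliers, are routine once strictness is secured.
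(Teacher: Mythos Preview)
Your argument is correct and shares the paper's overall strategy: both proofs first establish that the strict system $\langle \nabla f(x^{\ast}),d\rangle<0$, $\langle \nabla g_i(x^{\ast}),d\rangle<0$ for $i\in I(x^{\ast})$ has no solution (via the same first-order descent contradiction you describe), and then invoke a convex duality result to produce the multipliers. The difference lies in that second step. You stay in $\mathbb{R}^n$, set $A=\{\nabla f(x^{\ast})\}\cup\{\nabla g_i(x^{\ast}):i\in I(x^{\ast})\}$, and use strict separation of $0$ from the compact polytope $co\,A$ to conclude $0\in co\,A$ directly---essentially Gordan's theorem derived on the spot. The paper instead lifts to $\mathbb{R}^{n+1}$: it defines the polyhedral cone $C=\{(y_0,y):y_0+\langle\nabla f(x^{\ast}),y\rangle\le 0,\ y_0+\langle\nabla g_i(x^{\ast}),y\rangle\le 0\}$, observes that emptiness of the strict system forces $y_0\le 0$ on $C$, hence $(1,0)\in C^{\ast}$, and then reads off the multipliers from Theorem~2.3.3 (the Farkas-type polar cone description). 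Your route is a bit more elementary and self-contained, avoiding both the lift and the appeal to Theorem~2.3.3; the paper's route has the virtue of exercising the polar-cone machinery it has already set up and of making the normalization $\sum_i\overline{\lambda_i}=1$ appear naturally as the first coordinate of $(1,0)$.
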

\begin{proof}
Define a set as follows:\emph{(Note that $y$ is a vector in $\mathbb{R}^n$)}
\begin{equation*}
    Y = \left\{ y\in \mathbb{R}^n \:\middle\vert\: \langle \nabla f(x^{\ast}),y \rangle < 0,\langle \nabla g_{i}(x^{\ast}), y \rangle < 0\:(i\in I(x^{\ast})) \right\}.
\end{equation*}
It follows that $Y$ is empty. In fact, if there exists $y\in Y$, then it's easy to prove that both $f(x^{\ast}+\alpha y) < f(x^{\ast})$ and $g_{i}(x^{\ast}+\alpha y) < 0\:(i = 1,\dots,m)$ hold for sufficiently small $\alpha > 0$, which contradicts the fact that $x^{\ast}$ is a local optimal solution. Now we define a convex cone $C\subseteq \mathbb{R}^n\,(i\in I(x^{\ast}))$
\begin{align*}
    C &= \left \{(y_{0},y)^T\in \mathbb{R}^{n+1} \:\middle\vert\: y_{0} + \langle \nabla f(x^{\ast}),y \rangle \leq 0, y_{0} + \langle \nabla g_{i}(x^{\ast}),y \rangle \leq 0\: \right \} \\
    &= \left \{(y_{0},y)^T\in \mathbb{R}^{n+1} \:\middle\vert\: \langle(1,\nabla f(x^{\ast}))^T, (y_{0},y)^T\rangle \leq 0, \langle(1,\nabla g_{i}(x^{\ast}))^T, (y_{0},y)^T\rangle \leq 0 \right \}.
\end{align*}
\par \noindent Since $Y$ is empty, it follows that $y_{0}\leq 0$ for any $(y_{0},y)^T\in C$. We conclude that $(1,0)^T\in C^{\ast}$, which is proven by calculating $\langle (1,0)^T,(y_{0},y)^T\rangle = y_{0} \leq 0,\:\forall\:(y_{0},y)^T\in C$. It is clear from Theorem 2.3.3 that there exists nonnegative $\overline{\lambda_{i}}(i\in{0}\cup I(x^{\ast}))$ satisfying
\begin{equation*}
\begin{aligned}
&\overline{\lambda_{0}}\nabla f(x^{\ast}) + \sum_{i\in I(x^{\ast})}\overline{\lambda_{i}} \nabla g_{i}(x^{\ast}) = 0,\\
&\overline{\lambda_{0}} + \sum_{i\in I(x^{\ast})}\overline{\lambda_{i}} = 1, \:i\in I(x^{\ast})
\end{aligned}
\end{equation*}
Let $\overline{\lambda_{i}} = 0$ when $i\notin I(x^{\ast})$, completing the proof of the theorem.
\end{proof}

\subsection{Constraint Qualifications}
\begin{center}
\textbf{Some Comments about KKT and Fritz John Conditions}
\end{center}
From the theorem discussed in \textsection3.2, we can see that Fritz John condition still holds although $C(S,x)\subseteq co\,T(S,x)$ does not hold. When $\overline{\lambda_{0}} = 0$, Fritz John condition doesn't contain any information about the objective function $f(x)$, which is a pathological phenomenon. Only under the condition of constraint qualifications, we can assure that $\overline{\lambda_{0}} > 0$. Then Fritz John condition is reasonable and equivalent to KKT condition (Divided by $\overline{\lambda_{0}}$ and replace $\overline{\lambda_{i}}$ with $\overline{\lambda_{i}}/\overline{\lambda_{0}}$).
\begin{center}
\textbf{Constraint Qualifications}
\end{center}
In this part, we will present following constraint qualifications related to problem \textrm{(P1)} under the assumption of Fr$\Acute{e}$chet differentiability and make a brief discussion about relationship between them.
\begin{itemize}
    \item \textbf{Linear Independence Constraint Qualification:} 
    \par $h_{j}(j = 1,\dots,n)$ are continuously differentiable at $x$, and $\nabla g_{i}(x)(i\in I(x)), \nabla h_{j}(x)\newline(j = 1,\dots,n)$ are linearly independent.
    
    \item \textbf{Slater's Constraint Qualification:}
    \par $g_{i}(i\in I(x))$ are convex functions, and $h_{j}(j = 1,\dots,n)$ are affine functions (that is, $h(x)=\langle x,\zeta \rangle + \beta$ for $\zeta \in \mathbb{R}^n$ and $\beta \in \mathbb{R}$), and exists $x^{0}$, such that $g_{i}(x^{0}) < 0(i = 1,\dots,m)$ and $h_j(x^{0}) = 0(j = 1,\dots,n)$.
    
    \item \textbf{Mangasarian-Fromovitz Constraint Qualification:}
    \par $h_{j}(j = 1,\dots,n)$ are continuously differentiable at $x$ and $\nabla h_{j}(j = 1,\dots,n)$ are linearly independent. There exists $y\in \mathbb{R}^n$, such that $\langle \nabla g_{i}(x),y \rangle < 0(i\in I(x))$ and $\langle \nabla h_{j}(x),y \rangle = 0(j = 1,\dots,n)$.
    
    \item \textbf{Abadie Constraint Qualification:} $C(S,x)\subseteq T(S,x)$.
    
    \item \textbf{Guignard Constraint Qualification:} $C(S,x)\subseteq co\,T(S,x)$.
\end{itemize}

\begin{theorem}
The figure below reveals the relationship between the above constraint qualifications.
\[ \xymatrix@=0.6cm{
*+<1cm>[o][F]{L.I.} \ar@{->}[rd]&  &  &  \\
 & *+<1cm>[o][F]{M-F}\ar@{->}[r] & *+<1cm>[o][F]{Abadie}\ar@{->}[r] & *+<1cm>[o][F]{Guignard}\\
*+<1cm>[o][F]{Slater}\ar@{->}[ur] 
} \]
\end{theorem}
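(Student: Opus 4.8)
The plan is to verify the four arrows in the diagram one at a time, namely L.I.$\,\Rightarrow\,$M-F, Slater$\,\Rightarrow\,$M-F, M-F$\,\Rightarrow\,$Abadie, and Abadie$\,\Rightarrow\,$Guignard. Three of these are short bookkeeping arguments; essentially all the content sits in M-F$\,\Rightarrow\,$Abadie, which I expect to be the main obstacle. Throughout I work at the feasible point $x^{\ast}$ and abbreviate $I = I(x^{\ast})$.

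For L.I.$\,\Rightarrow\,$M-F I would argue by surjectivity. Linear independence of $\nabla g_{i}(x^{\ast})$ ($i\in I$) together with $\nabla h_{j}(x^{\ast})$ ($j=1,\dots,n$) means the linear map sending $y\in\mathbb{R}^n$ to the vector of values $\langle\nabla g_{i}(x^{\ast}),y\rangle$ for $i\in I$ and $\langle\nabla h_{j}(x^{\ast}),y\rangle$ for $j=1,\dots,n$ has full row rank, hence is onto $\mathbb{R}^{|I|}\times\mathbb{R}^{n}$. I then pick $y$ in the preimage of the point whose $g$-coordinates all equal $-1$ and whose $h$-coordinates all equal $0$; this $y$ witnesses M-F, and linear independence of the $\nabla h_{j}(x^{\ast})$ alone is inherited from the larger independent family. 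For Slater$\,\Rightarrow\,$M-F I would take the Slater point $x^{0}$ and set $y = x^{0}-x^{\ast}$. Convexity of each active $g_{i}$ gives, via the gradient inequality, $\langle\nabla g_{i}(x^{\ast}),y\rangle \le g_{i}(x^{0})-g_{i}(x^{\ast}) = g_{i}(x^{0}) < 0$, while affinity of $h_{j}$ with $\nabla h_{j}(x^{\ast})=\zeta_{j}$ and feasibility of both $x^{0}$ and $x^{\ast}$ give $\langle\nabla h_{j}(x^{\ast}),y\rangle = h_{j}(x^{0})-h_{j}(x^{\ast}) = 0$. One caveat worth flagging is that the literal Slater hypothesis does not by itself force the $\nabla h_{j}(x^{\ast})$ to be linearly independent, so I would either reduce the affine equalities to an independent subsystem, as is standard, or invoke this as a blanket regularity convention.

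The crux is M-F$\,\Rightarrow\,$Abadie. Since $T(S,x^{\ast})\subseteq C(S,x^{\ast})$ always holds (the remark following the definition of the linearizing cone), I only need the reverse inclusion $C(S,x^{\ast})\subseteq T(S,x^{\ast})$. Fixing $y\in C(S,x^{\ast})$ and letting $\bar y$ be the M-F direction, I set $y_{\varepsilon}=y+\varepsilon\bar y$ for $\varepsilon>0$; then the active inequalities become strict, $\langle\nabla g_{i}(x^{\ast}),y_{\varepsilon}\rangle<0$ for $i\in I$, while $\langle\nabla h_{j}(x^{\ast}),y_{\varepsilon}\rangle=0$. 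The key step is to build a feasible arc tangent to $y_{\varepsilon}$: using linear independence of the $\nabla h_{j}(x^{\ast})$, I would seek a curve $\gamma(t)=x^{\ast}+t\,y_{\varepsilon}+\sum_{j}v_{j}(t)\nabla h_{j}(x^{\ast})$ and solve $h_{j}(\gamma(t))=0$ for the correction $v(t)$ by the implicit function theorem, the relevant Jacobian at $t=0$ being the Gram matrix of the $\nabla h_{j}(x^{\ast})$, which is positive definite and hence invertible. Differentiating $h_{j}(\gamma(t))=0$ at $t=0$ forces $v'(0)=0$, so $\gamma'(0^{+})=y_{\varepsilon}$; first-order expansions then give $g_{i}(\gamma(t))=t\langle\nabla g_{i}(x^{\ast}),y_{\varepsilon}\rangle+o(t)<0$ on the active indices and $g_{i}(\gamma(t))<0$ on the inactive ones by continuity, so $\gamma(t)\in S$ for small $t>0$. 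This exhibits $y_{\varepsilon}\in T(S,x^{\ast})$, and letting $\varepsilon\to 0$ with closedness of $T(S,x^{\ast})$ (noted in the same remark) yields $y\in T(S,x^{\ast})$.

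Finally Abadie$\,\Rightarrow\,$Guignard is immediate, since $C(S,x^{\ast})\subseteq T(S,x^{\ast})\subseteq co\,T(S,x^{\ast})$. I expect the implicit-function construction and the verification $\gamma'(0^{+})=y_{\varepsilon}$ to be the only genuinely delicate points; the surjectivity, convexity, and inclusion arguments are routine first-order computations.
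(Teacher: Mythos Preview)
Your proposal is correct and follows the standard textbook route (perturb by the M--F direction, straighten the equalities with the implicit function theorem, close up in $T(S,x^{\ast})$), including the honest flag that Slater as stated does not by itself yield linear independence of the $\nabla h_{j}(x^{\ast})$. There is nothing to compare against, however: the paper does not prove this theorem at all but simply records the diagram and refers the reader to Fukushima's book, so your argument supplies considerably more than the paper does.
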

\begin{framed}
\noindent \emph{Remark:} Here we give the result directly without detailed proof since constraint qualifications are not main topics in this paper, the readers who take interest in these materials may refer to \cite{fukushima2011fundamentals}.
\end{framed}

\section{Convex Programming}
\subsection{Introduction to Subdifferentials}
From geometric viewpoint, a function $f:\mathbb{R}^n \mapsto \mathbb{R}$ is convex if and only if its tangent line is below the graph. The concept of subdifferential of convex functions can be introduced based on this property.
\begin{definition}
Let $f(x)$ be a convex function on $\mathbb{R}^n$, the subdifferential of $f(x)$ at $x$ denoted by $\partial f(x)$, defined as follows:
\begin{align}
    \partial f(x) = \left \{ \xi \in \mathbb{R}^n \:\middle\vert\: f(y) \geq f(x) + \xi^T(y-x), y\in \mathbb{R}^n \right \},
\end{align}
$\xi \in \partial f(x)$ is the element of subdifferential, called subgradient.
\end{definition}
\begin{framed}
\noindent \emph{Remark:} It's easy to verify that $\partial f(x)$ is a closed convex set from the definition.
\end{framed}

\begin{corollary}
From the definition of directional derivative, it is not difficult to conclude that $\xi \in \partial f(x)$ iff
\begin{align}
    f'(x;d)\geq \xi^T d,\quad \forall d\in\mathbb{R}^n.
\end{align}
\end{corollary}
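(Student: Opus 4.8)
The plan is to prove both implications directly from the two defining relations, using one standard monotonicity property of convex difference quotients as the bridge.

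For the forward direction, suppose $\xi \in \partial f(x)$. The subgradient inequality $f(y) \geq f(x) + \xi^T(y-x)$ holds for \emph{all} $y$, so I would simply specialize to $y = x + td$ with $t > 0$. This gives $f(x+td) - f(x) \geq t\,\xi^T d$, hence $\frac{f(x+td)-f(x)}{t} \geq \xi^T d$ for every $t > 0$. Letting $t \to 0^+$ and invoking the definition of the directional derivative (Definition 2.1.4) yields $f'(x;d) \geq \xi^T d$. This half is essentially immediate.

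For the reverse direction, suppose $f'(x;d) \geq \xi^T d$ for all $d \in \mathbb{R}^n$, and I want to recover the global subgradient inequality. The key step is to relate $f'(x;d)$ back to a finite difference. For a convex function the difference quotient $t \mapsto \frac{f(x+td)-f(x)}{t}$ is non-decreasing on $(0,\infty)$: indeed, for $0 < s < t$ one writes $x+sd$ as the convex combination $\frac{s}{t}(x+td) + (1-\frac{s}{t})x$ and applies convexity to obtain $\frac{f(x+sd)-f(x)}{s} \leq \frac{f(x+td)-f(x)}{t}$. Consequently the limit defining $f'(x;d)$ is in fact the infimum over $t > 0$, so in particular $f'(x;d) \leq f(x+d) - f(x)$ by taking $t = 1$. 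Now substituting $d = y - x$ gives
\begin{equation*}
    f(y) - f(x) = f\bigl(x + (y-x)\bigr) - f(x) \geq f'(x; y-x) \geq \xi^T(y-x),
\end{equation*}
which is exactly the condition $\xi \in \partial f(x)$ from Definition 4.1.1.

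The main obstacle is the monotonicity of the difference quotient, since everything else is substitution and a limit. This monotonicity is precisely where convexity of $f$ enters in an essential way; without it the inequality $f'(x;d) \leq f(x+d)-f(x)$ can fail, and the reverse implication would break down. I would therefore isolate and prove this monotonicity carefully (either inline or as a short preliminary remark), after which both directions close in one or two lines each.
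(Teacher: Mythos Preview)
Your proof is correct and is the standard argument. The paper itself does not supply a proof for this corollary; it simply asserts that the equivalence is ``not difficult to conclude'' from the definition of the directional derivative, so there is no detailed argument to compare against. Your write-up fills in exactly the details one expects: the forward direction by specializing $y = x + td$ and passing to the limit, and the reverse direction by invoking the monotonicity of the convex difference quotient so that $f'(x;d) \leq f(x+d) - f(x)$, then substituting $d = y - x$. This is precisely the route any careful reader would take, and your identification of the monotonicity step as the one place where convexity is genuinely used is on point.
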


Now we present two theorems describing the subdifferential of supremum function $f(x) = \sup \left \{ f_{i}(x) \:\middle\vert\: i\in T\right \}$ under the assumption that $T $ is a index set and $f_{i}(x)\:(i\in T)$ are convex functions on $\mathbb{R}^n$. It follows from the remark of Theorem 2.2.4 that $f(x)$ is also a convex function.

\begin{theorem}
The subdifferential $\partial f(x)$ of supremum function $f(x)$ satisfying the following:
\begin{align*}
    cl\,co\,\bigcup_{i\in T(x)} \partial f_{i}(x) \subseteq \partial f(x)
\end{align*}
where $T(x) = \left \{ i\in T\:\middle\vert\: f_{i}(x) = f(x) \right \}$.
\end{theorem}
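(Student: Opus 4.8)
The plan is to reduce the asserted inclusion to the elementary containment $\partial f_{i}(x)\subseteq\partial f(x)$ for every active index $i\in T(x)$, and then to let the convex-hull and closure operations pass through for free, because the target set $\partial f(x)$ is already convex and closed by the remark following Definition 4.1.1 (recall $f$ is convex, so $\partial f(x)$ is well defined).

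First I would fix $i\in T(x)$ and a subgradient $\xi\in\partial f_{i}(x)$. By the defining inequality in Definition 4.1.1, $f_{i}(y)\geq f_{i}(x)+\xi^{T}(y-x)$ for all $y\in\mathbb{R}^n$. The key step is to upgrade this inequality from $f_{i}$ to $f$. Since $f(y)=\sup_{j\in T}f_{j}(y)\geq f_{i}(y)$ holds at every point, and since $i\in T(x)$ forces the base-point equality $f_{i}(x)=f(x)$, I obtain
\begin{align*}
f(y)\geq f_{i}(y)\geq f_{i}(x)+\xi^{T}(y-x)=f(x)+\xi^{T}(y-x)\quad\forall y\in\mathbb{R}^n,
\end{align*}
which is exactly the condition $\xi\in\partial f(x)$. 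Taking the union over $i\in T(x)$ yields $\bigcup_{i\in T(x)}\partial f_{i}(x)\subseteq\partial f(x)$.

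Next I would close up under the two set operations. Because $\partial f(x)$ is convex, the convex hull of any subset of it remains inside it, so $co\,\bigcup_{i\in T(x)}\partial f_{i}(x)\subseteq\partial f(x)$; and because $\partial f(x)$ is also closed, passing to the closure preserves the containment, giving the claimed $cl\,co\,\bigcup_{i\in T(x)}\partial f_{i}(x)\subseteq\partial f(x)$.

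I do not expect a genuine obstacle here: no compactness or continuity hypotheses on the index set $T$ are needed for this direction. The role of $T(x)$ is precisely to guarantee the tightness $f_{i}(x)=f(x)$ that makes the pointwise domination $f\geq f_{i}$ turn a subgradient of $f_{i}$ into a subgradient of $f$. The only point requiring care is to notice that this transfer fails for an inactive index, where $f_{i}(x)<f(x)$, which is exactly why the union is taken over $T(x)$ rather than all of $T$; the real subtlety of the full characterization lives in the reverse inclusion (equality), which would demand extra structure and is not claimed here.
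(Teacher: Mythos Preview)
Your proof is correct and follows essentially the same route as the paper: show $\partial f_{i}(x)\subseteq\partial f(x)$ for each active index via the chain $f(y)\geq f_{i}(y)\geq f_{i}(x)+\xi^{T}(y-x)=f(x)+\xi^{T}(y-x)$, then invoke closedness and convexity of $\partial f(x)$ to absorb the closure and convex hull. The paper's argument is line-for-line the same, only more terse.
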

\begin{proof}
Given $i\in T(x)$ and $\xi \in \partial f_{i}(x)$, it follows from the definition of subdifferential that
\begin{equation*}
    \begin{aligned}
    f(y) &\geq f_{i}(y) \geq f_{i}(x) + \xi^T(y-x)\\
         &= f(x) + \xi^T(y-x),\quad \forall y\in \mathbb{R}^n.
    \end{aligned}
\end{equation*}
which implies $\xi \in \partial f(x)$, hence
\begin{align*}
    \partial f_{i}(x) \subseteq \partial f(x),\quad i\in T(x)
\end{align*}
Since $\partial f(x)$ is close and convex, we obtain the inclusion relationship as required. 
\end{proof}

This theorem only illustrates the inclusion relationship on one side, the next theorem states that the equation holds under certain conditions.
\begin{theorem}
Assume that $f(x)$ is the supremum function of a collection convex  functions $f_{i}(x)\,(i\in T)$. Given $x\in \mathbb{R}^n$, we define function $h_{i}:i\to f_{i}(x)$. Under the condition that $T$ is compact and $h_{i}$ is upper semicontinuous, we have
\begin{align}
    co\,\bigcup_{i\in T(x)} \partial f_{i}(x) = \partial f(x).
\end{align}
\end{theorem}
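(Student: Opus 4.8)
The plan is to establish the set equality by comparing support functions, invoking Theorem 2.2.2, and thereby reduce the whole statement to a single identity between directional derivatives plus a compactness check. Since one inclusion is essentially free, almost all of the work lies in a limiting argument where the hypotheses that $T$ is compact and $i\mapsto f_i$ is upper semicontinuous are used. First, I observe that the inclusion $co\,\bigcup_{i\in T(x)}\partial f_i(x)\subseteq \partial f(x)$ is immediate from Theorem 4.1.1, because $co\,A\subseteq cl\,co\,A$. By the same theorem it therefore suffices to prove $\partial f(x)\subseteq cl\,co\,\bigcup_{i\in T(x)}\partial f_i(x)$ and then to verify that the convex hull on the right is already closed.

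Second, I would reduce this remaining inclusion to an inequality between support functions. By Corollary 4.1.1, together with the standard fact that the support function of $\partial f(x)$ is the directional derivative, we have $\delta^{\ast}_{\partial f(x)}(d)=f'(x;d)$, while $\delta^{\ast}_{\partial f_i(x)}=f_i'(x;\cdot)$; since the support function of a convex hull of a union is the supremum of the individual support functions (and is unaffected by taking closure or convex hull), the support function of $cl\,co\,\bigcup_{i\in T(x)}\partial f_i(x)$ is $\sup_{i\in T(x)}f_i'(x;\cdot)$. By Theorem 2.2.2, equality of these convex compact sets follows once I establish the identity
\[ f'(x;d)=\sup_{i\in T(x)}f_i'(x;d),\qquad \forall d\in\mathbb{R}^n. \]
The inequality $f'(x;d)\geq\sup_{i\in T(x)}f_i'(x;d)$ is the easy half: for $i\in T(x)$ we have $f\geq f_i$ with equality at $x$, so the difference quotient of $f$ dominates that of $f_i$, and letting $t\to 0^{+}$ gives $f'(x;d)\geq f_i'(x;d)$.

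The hard part, and the main obstacle, is the reverse inequality $f'(x;d)\leq\sup_{i\in T(x)}f_i'(x;d)$. Here I would take a sequence $t_k\downarrow 0$ realizing $f'(x;d)$ and, for each $k$, select $i_k\in T$ with $f(x+t_kd)=f_{i_k}(x+t_kd)$, which exists because an upper semicontinuous function on the compact set $T$ attains its maximum. Using $f(x)\geq f_{i_k}(x)$ and the monotonicity of difference quotients of the convex function $f_{i_k}$ (nondecreasing in the step size), I freeze the step at a fixed $s>0$ to get, for $t_k\leq s$,
\[ \frac{f(x+t_kd)-f(x)}{t_k}\leq\frac{f_{i_k}(x+sd)-f_{i_k}(x)}{s}. \]
Passing to a convergent subsequence $i_k\to\bar i\in T$ by compactness, invoking the equi-Lipschitz continuity of the convex family near $x$ to conclude $f_{i_k}(x)\to f(x)$, and using upper semicontinuity to get $\limsup_k f_{i_k}(x+sd)\leq f_{\bar i}(x+sd)$, one identifies $\bar i\in T(x)$ and obtains $f'(x;d)\leq\frac{f_{\bar i}(x+sd)-f_{\bar i}(x)}{s}$; letting $s\to 0^{+}$ yields $f'(x;d)\leq f_{\bar i}'(x;d)\leq\sup_{i\in T(x)}f_i'(x;d)$. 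The delicate points to control are the existence of the equi-Lipschitz bound, which follows from local uniform boundedness of the convex functions $f_i$ near $x$, and the confirmation that the cluster index $\bar i$ lies in the active set $T(x)$.

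Finally, to upgrade equality of closed convex hulls to the stated equality with $co\,\bigcup_{i\in T(x)}\partial f_i(x)$ itself, I would show this convex hull is compact, hence closed. Since $T(x)$ is a closed subset of the compact set $T$ it is compact; the map $i\mapsto\partial f_i(x)$ is locally bounded by the same equi-Lipschitz estimate and has closed graph, so $\bigcup_{i\in T(x)}\partial f_i(x)$ is compact, and by Carathéodory's theorem its convex hull is compact in $\mathbb{R}^n$. Combining this with the support-function identity gives $co\,\bigcup_{i\in T(x)}\partial f_i(x)=cl\,co\,\bigcup_{i\in T(x)}\partial f_i(x)=\partial f(x)$, which is the claimed equality.
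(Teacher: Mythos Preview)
The paper does not actually prove this theorem: immediately after the statement it inserts a remark saying ``The proof of this theorem involves many lemmas hence we omit details and use it directly,'' and refers the reader to \cite{mordukhovich2013easy}. There is therefore no argument in the paper to compare your proposal against.

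That said, your proposal is the standard route to this result and is essentially correct. Reducing the set equality to the identity $f'(x;d)=\sup_{i\in T(x)}f_i'(x;d)$ via support functions (Corollary~4.1.1 and Theorem~2.2.2), extracting a cluster index $\bar i$ from a maximizing sequence by compactness of $T$, and closing with Carath\'eodory to get compactness of the convex hull is exactly how the classical proofs in the references proceed. The two points you flag as ``delicate'' are indeed where the real work lies: showing $\bar i\in T(x)$ requires $f_{i_k}(x)\to f(x)$, and for this your appeal to an equi-Lipschitz bound is the right mechanism, but you should be explicit that it rests on $f$ being finite in a neighborhood of $x$ (so that $f_i\leq f$ gives a uniform upper bound there) together with convexity, and that the upper-semicontinuity hypothesis is being used at \emph{every} point $y$ near $x$, not only at the single point $x$; the statement in the paper is ambiguously phrased, but the intended hypothesis is the pointwise one for each $y$. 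With those clarifications your argument goes through and in fact supplies what the paper omits.
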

\begin{framed}
\noindent \emph{Remark:} The proof of this theorem involves many lemmas hence we omit details and use it directly. The reader who wants to acquire detailed proof may refer to \cite{mordukhovich2013easy}.
\end{framed}

\subsection{Lagrangian Methods for Convex Propramming}
Before our discussion of Fritz John condition, we first introduce a useful theorem called extreme condition.
\begin{theorem}
Let $f(x)$ be a convex function on $\mathbb{R}^n$, then $x^{\ast}$ is the minimum point of $f(x)$ iff $0\in \partial f(x^{\ast})$.
\end{theorem}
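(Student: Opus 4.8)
The plan is to prove this equivalence by directly unwinding the definition of the subdifferential (8) with the particular choice $\xi = 0$, which is the whole content of the statement. Notice that membership $0 \in \partial f(x^{\ast})$ means, by Definition 4.1.1, that
\begin{equation*}
    f(y) \geq f(x^{\ast}) + 0^T(y - x^{\ast}) = f(x^{\ast}), \quad \forall y \in \mathbb{R}^n,
\end{equation*}
so the subgradient inequality collapses precisely into a global minimality inequality. The two directions of the theorem are then nothing more than reading this line left-to-right and right-to-left.

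For the forward direction I would assume $x^{\ast}$ is a minimum point of $f$, so that $f(y) \geq f(x^{\ast})$ for every $y \in \mathbb{R}^n$. Rewriting the right-hand side as $f(x^{\ast}) + 0^T(y - x^{\ast})$ shows that the vector $\xi = 0$ satisfies the defining inequality of $\partial f(x^{\ast})$, hence $0 \in \partial f(x^{\ast})$. For the converse I would assume $0 \in \partial f(x^{\ast})$, apply the definition with $\xi = 0$, and conclude $f(y) \geq f(x^{\ast})$ for all $y$, i.e. $x^{\ast}$ is a (global) minimizer.

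The main point to emphasize, rather than any obstacle, is that there is essentially nothing hard here: the subdifferential of a convex function is defined by a \emph{global} affine minorant condition, so the minimality characterization is immediate and automatically global rather than merely local. This is exactly where convexity does its work — for a general function the vanishing of a derivative only yields a stationary point, whereas for convex $f$ the supporting-hyperplane form of the subgradient inequality upgrades stationarity to global optimality. I would note in passing that if $f$ is moreover differentiable at $x^{\ast}$, then $\partial f(x^{\ast}) = \{\nabla f(x^{\ast})\}$ and the condition reduces to the familiar $\nabla f(x^{\ast}) = 0$, recovering the classical first-order rule as a special case.
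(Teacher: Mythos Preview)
Your proof is correct and follows essentially the same approach as the paper: both directions amount to instantiating the subgradient inequality (Definition 4.1.1) with $\xi = 0$ and reading it as the global minimality condition $f(y) \geq f(x^{\ast})$. The paper's proof is slightly terser but substantively identical; your additional remarks about globality and the differentiable special case are fine commentary but not needed for the argument.
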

\begin{proof}
Assume that $0\in \partial f(x^{\ast})$, according the definition of subdifferential, for any $x\in \mathbb{R}^n$ we have
\begin{align*}
    f(x) - f(x^{\ast}) \geq 0^T(x-x^{\ast}),
\end{align*}
thus $f(x) \geq f(x^{\ast})\:(\forall x\in \mathbb{R}^n)$, which implies $f(x)$ attains minimum at $x^{\ast}$. On the other hand, let $x^{\ast}$ be minimum point of $f(x)$ then $f(x)\geq f(x^{\ast})\:(\forall x\in \mathbb{R}^n)$, that is,
\begin{align*}
    f(x) \geq f(x^{\ast}) + 0^T (x-x^{\ast}),
\end{align*}
hence $0\in \partial f(x^{\ast})$ from the definition of subdifferential, completing the proof.
\end{proof}

For simplicity, we first discuss generalized Fritz John condition of problem $\textrm{(P2)}$ in the following theorem.
\begin{theorem}
\textbf{(Fritz John Condition)} Let $f(x),g_{i}(x)\,(i = 1,\dots,m)$ are all convex functions in problem $\mathrm{(P2)}$ and $f(x)$ attains minimum at $x^{\ast}$, then exists a sequence $\overline{\lambda_{i}}\,(i = 0,\dots,m)$ s.t. $\sum_{i=0}^m \overline{\lambda_{i}} = 1$ satisfying
\begin{equation}
    \begin{aligned}
    & 0\in \overline{\lambda_{0}} \partial f(x^{\ast}) + \sum_{i=1}^m \overline{\lambda_{i}} \partial g_{i}(x^{\ast}),\\
    & \overline{\lambda_{i}}g_{i}(x^{\ast}) = 0,\quad i = 1,\dots,m.
    \end{aligned}
\end{equation}
\end{theorem}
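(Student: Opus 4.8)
The strategy is to reduce the convex Fritz John condition to the classical separation argument used in Theorem 3.2.1, replacing the gradients $\nabla f(x^{\ast})$, $\nabla g_i(x^{\ast})$ by arbitrary subgradients. The key observation is that for convex functions the directional derivative $f'(x^{\ast};d)$ controls the subdifferential via the support-function identity $f'(x^{\ast};d)=\delta^{\ast}_{\partial f(x^{\ast})}(d)=\max_{\xi\in\partial f(x^{\ast})}\xi^T d$ (this is the convex-analysis companion to Corollary 4.1.1). So I would first define the set of directions
\begin{equation*}
    Y=\left\{\,d\in\mathbb{R}^n \;\middle\vert\; f'(x^{\ast};d)<0,\; g_i'(x^{\ast};d)<0\ (i\in I(x^{\ast}))\,\right\},
\end{equation*}
and argue $Y=\varnothing$: if some $d\in Y$ existed, then along $x^{\ast}+\alpha d$ both $f$ and the active $g_i$ strictly decrease for small $\alpha>0$ (using convexity to get $f(x^{\ast}+\alpha d)-f(x^{\ast})\le\alpha f'(x^{\ast};d)$, and continuity to keep inactive $g_i$ negative), contradicting the minimality of $x^{\ast}$ on the feasible set of (P2).

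Next I would convert the emptiness of $Y$ into a separation statement in $\mathbb{R}^{n+1}$, exactly paralleling the classical proof. Form the convex cone
\begin{equation*}
    C=\left\{(y_0,d)^T\in\mathbb{R}^{n+1} \;\middle\vert\; y_0+f'(x^{\ast};d)\le 0,\; y_0+g_i'(x^{\ast};d)\le 0\ (i\in I(x^{\ast}))\right\},
\end{equation*}
and observe that $Y=\varnothing$ forces $y_0\le 0$ for every $(y_0,d)^T\in C$, whence $(1,0)^T\in C^{\ast}$. The subtlety here is that $f'(x^{\ast};\cdot)$ and $g_i'(x^{\ast};\cdot)$ are sublinear (convex, positively homogeneous) rather than linear, so $C$ is genuinely a convex cone; I would need Theorem 2.3.3 / Corollary 2.3.1 applied not to the linear functionals themselves but to the representation of $(1,0)^T$ as a nonnegative combination of the constraints defining $C$. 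This yields nonnegative multipliers $\overline{\lambda_0},\overline{\lambda_i}\ (i\in I(x^{\ast}))$, normalized so that $\overline{\lambda_0}+\sum_{i\in I(x^{\ast})}\overline{\lambda_i}=1$, satisfying $\overline{\lambda_0}f'(x^{\ast};d)+\sum_{i\in I(x^{\ast})}\overline{\lambda_i}g_i'(x^{\ast};d)\ge 0$ for all $d$.

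The final step translates this directional-derivative inequality back into the stated subdifferential inclusion. Since $\overline{\lambda_0}f'(x^{\ast};\cdot)+\sum\overline{\lambda_i}g_i'(x^{\ast};\cdot)\ge 0$ says the support function of $\overline{\lambda_0}\partial f(x^{\ast})+\sum\overline{\lambda_i}\partial g_i(x^{\ast})$ is nonnegative in every direction, Theorem 2.2.2 (monotonicity of support functions) gives $0\in\overline{\lambda_0}\partial f(x^{\ast})+\sum_{i\in I(x^{\ast})}\overline{\lambda_i}\partial g_i(x^{\ast})$; this uses the sum rule for subdifferentials of convex functions, namely that the support function of the sum is the sum of support functions. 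Setting $\overline{\lambda_i}=0$ for $i\notin I(x^{\ast})$ extends the sum to all $i=1,\dots,m$ and simultaneously gives the complementarity condition $\overline{\lambda_i}g_i(x^{\ast})=0$, since for inactive constraints the multiplier vanishes and for active ones $g_i(x^{\ast})=0$. The main obstacle I anticipate is the passage between the support-function formulation and the subdifferential inclusion: I must invoke (or at least cite) the identity $f'(x^{\ast};d)=\delta^{\ast}_{\partial f(x^{\ast})}(d)$ and the additivity of support functions over Minkowski sums, which the excerpt has not proved explicitly but which follow from Corollary 4.1.1 together with Theorem 2.2.2.
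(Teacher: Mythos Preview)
Your approach is genuinely different from the paper's, and it has a real gap at the step where you invoke Theorem~2.3.3.

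The paper does not parallel the smooth Fritz John argument at all. Instead it defines the auxiliary function
\[
F(x)=\max\{\,f(x)-f(x^{\ast}),\,g_1(x),\dots,g_m(x)\,\},
\]
observes that $F$ is convex with a global minimum at $x^{\ast}$, applies the extreme condition (Theorem~4.2.1) to get $0\in\partial F(x^{\ast})$, and then reads off the multipliers directly from the subdifferential formula for a finite maximum (Theorem~4.1.2):
\[
\partial F(x^{\ast})=\mathrm{co}\Bigl\{\partial f(x^{\ast})\cup\bigcup_{i\in I(x^{\ast})}\partial g_i(x^{\ast})\Bigr\}.
\]
The convex-combination coefficients that witness $0$ as an element of this hull \emph{are} the $\overline{\lambda_i}$, already normalized. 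No separation in $\mathbb{R}^{n+1}$, no Farkas, no support-function manipulations are needed.

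Your route can in principle be made to work, but Theorem~2.3.3 as stated in the paper is a \emph{polyhedral} Farkas lemma: it concerns a cone $K=\{y:\langle a^i,y\rangle\le 0,\ i=1,\dots,m\}$ cut out by finitely many linear inequalities, and identifies $K^{\ast}$ with the finitely generated cone of nonnegative combinations of the $a^i$. Your cone
\[
C=\bigl\{(y_0,d):y_0+f'(x^{\ast};d)\le 0,\ y_0+g_i'(x^{\ast};d)\le 0\ (i\in I(x^{\ast}))\bigr\}
\]
is cut out by sublinear, not linear, constraints; equivalently, by the infinite family of linear inequalities $\langle(1,\xi),(y_0,d)\rangle\le 0$ indexed by $\xi\in\partial f(x^{\ast})\cup\bigcup_{i\in I(x^{\ast})}\partial g_i(x^{\ast})$. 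From $(1,0)^{T}\in C^{\ast}$ you therefore cannot conclude via Theorem~2.3.3 that $(1,0)^{T}$ is a nonnegative combination of finitely many generators of the form $(1,\nabla f),(1,\nabla g_i)$; what you actually need is that $(1,0)^{T}$ lies in the closed conical hull of $\{(1,\xi):\xi\in\partial f(x^{\ast})\cup\bigcup_i\partial g_i(x^{\ast})\}$, and then a compactness argument to extract scalars $\overline{\lambda_i}$ summing to $1$ and specific subgradients. Carrying that out correctly essentially reconstructs Theorem~4.1.2, which is exactly the tool the paper uses in one line. So the gap is not fatal, but the fix collapses your argument onto the paper's.
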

\begin{proof}
Define the following function:
\begin{align}
    F(x) = \max \left \{ f(x)-f(x^{\ast}), g_{1}(x), \dots, g_m(x) \right \}.
\end{align}
It is easy to verify that $F(x) \geq 0, \forall x\in \mathbb{R}^n, F(x^{\ast}) = 0$, thus $F(x)$ attains its minimum at $x^{\ast}$. It follows from Theorem 2.2.4 that $F(x)$ is convex, which shows that $0\in \partial F(x^{\ast})$ according to Theorem 4.2.1. Applying Theorem 4.1.2, we have
\begin{align}
    \partial F(x^{\ast}) = co\,\left \{ \partial f(x^{\ast}) \:\bigcup\: (\bigcup\limits_{i\in I(x^{\ast})} \partial g_{i}(x^{\ast})) \right \},
\end{align}
where $I(x^{\ast}) = \left \{ i\in {1,\dots,m} \:\vert\: g_{i}(x^{\ast}) = 0 \right \}$. Note that the right side of (14) can be expressed as 
\begin{align}
    \left \{ \overline{\lambda_{0}} \xi + \sum_{i\in I(x^{\ast})} \overline{\lambda_{i}} \xi_{i} \:\middle\vert\: \xi \in \partial f(x^{\ast}),\xi_{i}\in \partial g_{i}(x^{\ast}),\overline{\lambda_{i}}\geq 0,i\in I(x^{\ast}), \sum_{i\in{0}\cup I(x^{\ast})}\overline{\lambda_{i}} = 1 \right \}.
\end{align}
It follows that 
\begin{equation}
    \begin{aligned}
    0 \in \overline{\lambda_{0}}\partial f(x^{\ast}) + \sum_{i\in I(x^{\ast})}\overline{\lambda_{i}} \partial g_{i}(x^{\ast}),\\
    \overline{\lambda_{i}}\geq 0,i\in I(x^{\ast}), \sum_{i\in{0}\cup I(x^{\ast})}\overline{\lambda_{i}} = 1
    \end{aligned}
\end{equation}
Let 
\begin{align}
    \overline{\lambda_{i}} = 0,\quad i\in \{1,\dots,m\}\setminus I(x^{\ast}).
\end{align}
Then we obtain the desired result.
\end{proof}

\begin{theorem}
\textbf{(Fritz John Condition)} Let $x^{\ast}$ be a local optimal solution of problem $\mathrm{(P1)}$, objective function $f:\mathbb{R}^n\mapsto \mathbb{R}$ and constraint functions $g_{i}:\mathbb{R}^n\mapsto \mathbb{R}(i=1,\dots,m)$ and $h_{j}:\mathbb{R}^n\mapsto \mathbb{R}(j=1,\dots,n)$ are all convex functions. Then exist $\overline{\lambda_{0}},\overline{\lambda_{1}},\dots,\overline{\lambda_{m}}$ and $\overline{\mu_{1}},\dots,\overline{\mu_{n}}$ satisfying
\begin{equation}
    \begin{aligned}
    & 0\in \overline{\lambda_{0}} \partial f(x^{\ast}) + \sum_{i=1}^m \overline{\lambda_{i}} \partial g_{i}(x^{\ast}) + \sum_{j=1}^n \overline{\mu_{j}} \partial h_{j}(x^{\ast}),\\
    & \overline{\lambda_{i}}g_{i}(x^{\ast}) = 0,\quad i = 1,\dots,m.
    \end{aligned}
\end{equation}
\end{theorem}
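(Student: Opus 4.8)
The plan is to reduce Theorem 4.2.3 to the inequality-constrained Fritz John condition already established in Theorem 4.2.2, by splitting each equality constraint into a pair of inequality constraints and then recombining the associated multipliers into a single sign-free multiplier.

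First I would rewrite the feasible region of (P1): each equality $h_j(x) = 0$ is equivalent to the two inequalities $h_j(x) \leq 0$ and $-h_j(x) \leq 0$. This recasts (P1) as a problem of the form (P2) carrying the $m + 2n$ inequality constraints $g_1, \dots, g_m, h_1, -h_1, \dots, h_n, -h_n$, while leaving the feasible set unchanged, so that $x^{\ast}$ remains a local optimal solution. Then I would apply Theorem 4.2.2 to this reformulated problem, obtaining nonnegative multipliers $\overline{\lambda_0}$ for $f$, $\overline{\lambda_i}$ for $g_i$, and $\mu_j^{+}, \mu_j^{-}$ for the constraints $h_j \leq 0$ and $-h_j \leq 0$, normalized to sum to one, with
\[
0 \in \overline{\lambda_0}\,\partial f(x^{\ast}) + \sum_{i=1}^m \overline{\lambda_i}\,\partial g_i(x^{\ast}) + \sum_{j=1}^n \mu_j^{+}\,\partial h_j(x^{\ast}) + \sum_{j=1}^n \mu_j^{-}\,\partial(-h_j)(x^{\ast}).
\]

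Next I would collapse the paired multipliers. Setting $\overline{\mu_j} = \mu_j^{+} - \mu_j^{-} \in \mathbb{R}$ and using $\partial(-h_j)(x^{\ast}) = -\partial h_j(x^{\ast})$, the last two sums merge into $\sum_{j=1}^n \overline{\mu_j}\,\partial h_j(x^{\ast})$ with sign-free $\overline{\mu_j}$, which is exactly the claimed inclusion. The complementary slackness relations $\mu_j^{+} h_j(x^{\ast}) = 0$ and $\mu_j^{-}(-h_j(x^{\ast})) = 0$ are satisfied automatically since $h_j(x^{\ast}) = 0$ at any feasible point, so no surviving condition is imposed on $\overline{\mu_j}$; meanwhile the conditions $\overline{\lambda_i} g_i(x^{\ast}) = 0$ transfer directly from Theorem 4.2.2.

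The main obstacle is a convexity subtlety in the splitting step: Theorem 4.2.2 requires every constraint function of the reformulated (P2) to be convex, yet $-h_j$ is convex only when $h_j$ is affine. Since $h_j$ is assumed convex, consistency forces the equality constraints to be affine — precisely the setting of Slater's constraint qualification in \textsection3.3 — in which case $\partial h_j(x^{\ast}) = \{\nabla h_j(x^{\ast})\}$ is a singleton and the identity $\partial(-h_j)(x^{\ast}) = -\partial h_j(x^{\ast})$ is transparent. I would therefore make the affineness explicit at the outset and verify that the auxiliary function $F(x) = \max\{\,f(x)-f(x^{\ast}), g_i(x), h_j(x), -h_j(x)\,\}$ underlying the proof of Theorem 4.2.2 remains convex once the equality constraints enter as affine inequalities, so that $0 \in \partial F(x^{\ast})$ and the subdifferential calculus of Theorem 4.1.2 apply verbatim.
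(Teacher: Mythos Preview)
The paper does not actually prove this theorem: the ``proof'' in \textsection4.2 is a remark that the result can be found in Rockafellar's book and that its proof ``concerns the concept of saddle point,'' after which the statement is simply asserted. So your proposal is a genuine attempt at an argument where the paper gives none, and the route you take (reduction to Theorem~4.2.2 by splitting each equality into two inequalities) is different from the saddle-point approach the paper points to.

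Your affineness observation is exactly right and is the honest way to make the hypotheses of Theorem~4.2.2 apply to the reformulated problem. There is, however, a standard trap in the splitting approach that you do not address. Once $h_j(x)=0$ is replaced by the pair $h_j\le 0$, $-h_j\le 0$, both constraints are active at $x^{\ast}$, and nothing prevents Theorem~4.2.2 from returning the degenerate convex combination
\[
0=\tfrac{1}{2}\,\nabla h_j(x^{\ast})+\tfrac{1}{2}\bigl(-\nabla h_j(x^{\ast})\bigr),
\]
i.e.\ $\mu_j^{+}=\mu_j^{-}=\tfrac{1}{2}$ with all other multipliers zero. After your recombination $\overline{\mu_j}=\mu_j^{+}-\mu_j^{-}$ this collapses to the all-zero multiplier vector. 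The normalization $\sum=1$ from Theorem~4.2.2 survives the splitting but is destroyed by the recombination, so the non-triviality that makes a Fritz John statement meaningful is lost. As the paper states Theorem~4.2.3, no explicit ``not all zero'' clause is written, so your argument technically delivers what is on the page; but that reading makes the theorem vacuous. The saddle-point route in Rockafellar is precisely what avoids this degeneracy. If you want to rescue the splitting approach you would need an extra argument excluding the degenerate case, and that typically forces a constraint qualification on the $h_j$, which pushes you out of the pure Fritz John setting and into KKT.
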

\begin{framed}
\noindent \emph{Remark:} This theorem can be found in Rockafellar's book \cite{rockafellar1970convex} and the proof of it concerns the concept of saddle point thus we present it directly.
\end{framed}

\section{Locally Lipschitz Programming}
\subsection{Introduction to Generalized Gradients}
\begin{definition}
Let $f:\mathbb{R}^n \mapsto \mathbb{R}$ be Lipschitz of rank K near a given point $x\in X$; that is, for some $\varepsilon > 0$, we have
\begin{align*}
    \left| f(y) - f(z) \right| \leq K \parallel y - z \parallel. \quad \forall y,z \in B(x,\varepsilon).
\end{align*}
\end{definition}
\begin{framed}
\noindent \emph{Remark:} It's easy to verify that $\left| d_{S}(y) - d_{S}(z) \right| \leq \parallel y - z \parallel$ for any convex compact set $S\subseteq \mathbb{R}^n$, which implies the rank of distance function $d_{S}(x)$ is 1.
\end{framed}

\begin{definition}
The generalized directional derivative of f at x in the direction v, denoted by $f^{\circ}(x;v)$, is defined as follows:
\begin{align*}
    f^{\circ}(x ; v) =\limsup _{y \rightarrow x,\:t\to 0^{+}} \frac{f(y+t v)-f(y)}{t}
\end{align*}
where of course $y$ is a vector in $X$ and $t$ is a positive scalar.
\end{definition}

\begin{definition}
Let $f(x)$ be a locally Lipschitz function on $\mathbb{R}^n$, generalized gradient of $f(x)$ denoted by $\partial^{\circ} f(x)$, is defined as follows:
\begin{align*}
    \partial^{\circ} f(x) = \left \{ \xi\in\mathbb{R}^n \:\middle\vert\: f^{\circ}(x ; v) \geq \xi^T v,\quad \forall v\in \mathbb{R}^n \right \}.
\end{align*}
For distinction, sometimes generalized directional derivative and generalized gradient are called Clarke directional derivative and Clarke subdifferential respectively in the literature.
\end{definition}

We will introduce some useful properties of Clarke subdifferential, which will be used in the proof of generalized Fritz John condition in locally Lipschitz optimization.
\begin{theorem}
Let $f(x)$ be a locally Lipschitz function on $\mathbb{R}^n$ of rank $L$ at $x$, then $\partial^{\circ} f(x)$ is convex and compact and $\parallel \xi \parallel \leq L,\,\forall \xi \in \partial^{\circ} f(x)$. That is, $\partial^{\circ} f(x) \subseteq cl\,B(0,L)$. In particular, For any convex and compact set $S\subseteq \mathbb{R}^n$, we have the inclusion relation $\partial^{\circ} d_{S}(x) \subseteq cl\,B(0,1)$.
\end{theorem}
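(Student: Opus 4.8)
The plan is to extract the single estimate on the Clarke directional derivative that drives everything, namely $f^{\circ}(x;v) \leq L\|v\|$ for all $v$, and then read off convexity, boundedness and closedness of $\partial^{\circ}f(x)$ directly from its defining inequality. First I would fix the Lipschitz ball $B(x,\varepsilon)$ of rank $L$ guaranteed by Definition 5.1.1. For $y$ in a smaller ball around $x$ and $t>0$ sufficiently small, both $y$ and $y+tv$ lie in $B(x,\varepsilon)$, so the Lipschitz estimate gives $\frac{|f(y+tv)-f(y)|}{t} \leq \frac{L\|tv\|}{t} = L\|v\|$. Passing to the $\limsup$ as $y\to x$ and $t\to 0^{+}$ then yields $f^{\circ}(x;v) \leq L\|v\|$ for every direction $v$.

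With this bound in hand, boundedness is immediate: if $\xi \in \partial^{\circ}f(x)$, then by definition $\xi^T v \leq f^{\circ}(x;v) \leq L\|v\|$ for all $v$; choosing $v = \xi$ gives $\|\xi\|^2 \leq L\|\xi\|$, hence $\|\xi\| \leq L$ (the case $\xi = 0$ being trivial). This is exactly the assertion $\partial^{\circ}f(x) \subseteq cl\,B(0,L)$. For convexity, I would take $\xi_1,\xi_2 \in \partial^{\circ}f(x)$ and $\lambda \in [0,1]$; since $\xi \mapsto \xi^T v$ is linear, $(\lambda\xi_1+(1-\lambda)\xi_2)^T v = \lambda\,\xi_1^T v + (1-\lambda)\,\xi_2^T v \leq f^{\circ}(x;v)$ for every $v$, so the convex combination again lies in $\partial^{\circ}f(x)$.

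For closedness I would observe that $\partial^{\circ}f(x) = \bigcap_{v\in\mathbb{R}^n} \{\xi \mid \xi^T v \leq f^{\circ}(x;v)\}$ is an intersection of closed half-spaces, each closed for fixed $v$ because $\xi \mapsto \xi^T v$ is continuous. An arbitrary intersection of closed sets is closed, so $\partial^{\circ}f(x)$ is closed; being closed and bounded in $\mathbb{R}^n$, it is compact by Heine-Borel. The final assertion then follows by specialization: the remark after Definition 5.1.1 records that for any convex compact $S$ the distance function $d_S$ is Lipschitz of rank $1$, so applying the statement just proved with $L = 1$ gives $\partial^{\circ}d_S(x) \subseteq cl\,B(0,1)$.

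I expect the only genuinely technical point to be the uniform estimate on $f^{\circ}$, where one must check that both arguments $y$ and $y+tv$ remain inside the Lipschitz ball while taking the $\limsup$ over $y\to x$ and $t\to 0^{+}$. Everything downstream is formal: convexity, closedness and boundedness are direct consequences of the bound $f^{\circ}(x;v)\leq L\|v\|$ together with the defining inequality, so no further analytic input is needed.
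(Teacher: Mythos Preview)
Your proof is correct and follows the standard route (essentially the argument in Clarke's monograph): bound $f^{\circ}(x;v)\le L\|v\|$ from the Lipschitz inequality, then read off boundedness, convexity and closedness of $\partial^{\circ}f(x)$ from its representation as an intersection of closed half-spaces. The paper itself states this theorem without proof, so there is nothing to compare against; your argument fills the gap cleanly, and the only delicate point---ensuring both $y$ and $y+tv$ stay inside the Lipschitz ball before passing to the $\limsup$---is handled exactly as it should be.
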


\begin{theorem}
Let $f(x)$ and $g(x)$ be locally Lipschitz functions on $\mathbb{R}^n$, $c$ is a given constant. Then we have
\begin{flushleft}
(i) $\partial^{\circ} (cf(x)) = c \partial^{\circ} f(x)$.\\
(ii) $\partial^{\circ} (f(x) + g(x)) \subseteq \partial^{\circ} f(x) + \partial^{\circ} g(x)$.
\end{flushleft}
\end{theorem}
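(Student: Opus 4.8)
The plan is to prove the two calculus rules for the Clarke subdifferential by working through the generalized directional derivative $f^{\circ}(x;v)$ and then passing to subgradients via the support-function characterization in Definition 5.1.3. The key observation is that $\partial^{\circ} f(x)$ is, by construction, exactly the set whose support function is $f^{\circ}(x;\cdot)$: indeed $\xi \in \partial^{\circ} f(x)$ iff $\langle \xi, v\rangle \leq f^{\circ}(x;v)$ for all $v$, so by Theorem 2.2.3 (monotonicity of support functions) inclusions between generalized gradients are equivalent to inequalities between the corresponding generalized directional derivatives. Since Theorem 5.1.1 guarantees each $\partial^{\circ} f(x)$ is convex and compact, this translation is legitimate.

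For part (i), I would split into the cases $c > 0$ and $c < 0$ (the case $c=0$ being trivial). For $c > 0$, factoring the positive constant out of the $\limsup$ gives $(cf)^{\circ}(x;v) = c\, f^{\circ}(x;v)$ directly, whence the support functions of $\partial^{\circ}(cf)(x)$ and $c\,\partial^{\circ}f(x)$ coincide and the sets are equal. For $c < 0$, the crucial point is that multiplying by a negative constant inside the $\limsup$ turns it into a $\liminf$ and reverses the direction $v \mapsto -v$; carrying out the substitution $y' = y + tv$ yields $(cf)^{\circ}(x;v) = (-c)\,f^{\circ}(x;-v)$, and one checks this is precisely the support function of $c\,\partial^{\circ}f(x)$. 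This case analysis is routine but must be done carefully to track the sign reversal.

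For part (ii), the argument rests on the elementary inequality
\begin{equation*}
    \limsup_{y\to x,\,t\to 0^{+}} \frac{(f+g)(y+tv)-(f+g)(y)}{t} \leq f^{\circ}(x;v) + g^{\circ}(x;v),
\end{equation*}
which holds because the $\limsup$ of a sum is dominated by the sum of the $\limsup$s. This gives $(f+g)^{\circ}(x;v) \leq f^{\circ}(x;v) + g^{\circ}(x;v)$ for every $v$. Now I would identify the right-hand side as the support function of the Minkowski sum $\partial^{\circ}f(x) + \partial^{\circ}g(x)$: for convex compact sets $S_1, S_2$ one has $\delta^{\ast}_{S_1+S_2}(v) = \delta^{\ast}_{S_1}(v) + \delta^{\ast}_{S_2}(v)$, and the sum of two convex compact sets is again convex and compact, so Theorem 2.2.3 applies to give the set inclusion $\partial^{\circ}(f+g)(x) \subseteq \partial^{\circ}f(x) + \partial^{\circ}g(x)$.

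\emph{The main obstacle} is not any single estimate but rather justifying the support-function bridge cleanly: one must verify that $f^{\circ}(x;\cdot)$ really is the support function of $\partial^{\circ}f(x)$ (equivalently, that $\partial^{\circ}f(x)$ is nonempty so the $\max$ is attained, which follows from compactness in Theorem 5.1.1), and that the sum $\partial^{\circ}f(x)+\partial^{\circ}g(x)$ inherits compactness so that Theorem 2.2.3 may be invoked. Once this dictionary between directional derivatives and support functions is set up, both rules reduce to the two pointwise inequalities above, and the superadditivity of $\limsup$ in part (ii) explains transparently why only an inclusion, not an equality, can be expected.
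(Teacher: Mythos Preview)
The paper states this result (Theorem 5.1.2) without proof, treating it as standard background on Clarke's calculus; there is therefore no paper-proof to compare against. Your argument is the standard one and is correct: reduce set relations to support-function relations via the identity $\delta^{\ast}_{\partial^{\circ}f(x)}(\cdot)=f^{\circ}(x;\cdot)$, then verify $(cf)^{\circ}(x;v)=|c|\,f^{\circ}(x;\operatorname{sgn}(c)\,v)$ and $(f+g)^{\circ}(x;v)\le f^{\circ}(x;v)+g^{\circ}(x;v)$ directly from the $\limsup$.

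Two small points to tighten. First, the support-function monotonicity result you invoke is Theorem~2.2.2 in the paper's numbering, not Theorem~2.2.3. Second, the ``bridge'' you flag as the main obstacle requires that $f^{\circ}(x;\cdot)$ actually \emph{is} a support function, i.e.\ that it is finite, positively homogeneous, and subadditive; this is not stated in the paper but follows in two lines from the local Lipschitz bound (finiteness, and $|f^{\circ}(x;v)|\le K\|v\|$) and from the subadditivity of $\limsup$, and you should include it explicitly rather than leave it as a caveat. Once that is in place, Theorem~5.1.1 guarantees $\partial^{\circ}f(x)$, $\partial^{\circ}g(x)$ are nonempty convex compact sets, their Minkowski sum is again convex compact, and Theorem~2.2.2 closes the argument.
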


In Theorem 4.2.1 we discuss extreme condition of convex function via subdifferential, which has the similar form under the condition of locally Lipschitz function, as you can see in the next theorem:
\begin{theorem}
Let $f(x)$ be a locally Lipschitz function on $\mathbb{R}^n$. If $f(x)$ attains its minimum or maximum at $x^{\ast}$, then we have $0\in \partial^{\circ} f(x^{\ast})$. 
\end{theorem}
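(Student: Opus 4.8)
The plan is to unwind the definition of the generalized gradient and reduce the entire statement to a single directional inequality. By Definition 5.1.3, the assertion $0 \in \partial^{\circ} f(x^{\ast})$ is equivalent to requiring $f^{\circ}(x^{\ast};v) \geq 0^T v = 0$ for every $v \in \mathbb{R}^n$. Thus the whole proof amounts to verifying that the Clarke directional derivative $f^{\circ}(x^{\ast};v)$ is nonnegative in every direction $v$, using only that $x^{\ast}$ is a local extremum and that $f$ is locally Lipschitz (so that all difference quotients involved are finite).

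First I would treat the case where $f$ attains a local minimum at $x^{\ast}$. The idea is to exploit the fact that the $\limsup$ defining $f^{\circ}(x^{\ast};v)$ ranges over all base points $y \to x^{\ast}$ together with $t \to 0^{+}$, and therefore dominates the value obtained along any single admissible path. Choosing the frozen path $y = x^{\ast}$ and letting $t \to 0^{+}$, local minimality gives $f(x^{\ast}+tv) \geq f(x^{\ast})$ for all small $t>0$, so each difference quotient $\frac{f(x^{\ast}+tv)-f(x^{\ast})}{t}$ is nonnegative. Since for every sufficiently small $\delta>0$ the supremum in the definition of $f^{\circ}$ already includes these quotients, the supremum over each ball of radius $\delta$ is nonnegative, and passing to the limit $\delta \to 0^{+}$ yields $f^{\circ}(x^{\ast};v)\geq 0$ for all $v$.

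For the maximum case I would reduce to the previous one: if $f$ attains a local maximum at $x^{\ast}$, then $-f$ attains a local minimum there, so the minimum case gives $0\in \partial^{\circ}(-f)(x^{\ast})$. Applying the scaling rule of Theorem 5.1.2 (i) with $c=-1$ gives $\partial^{\circ}(-f)(x^{\ast}) = -\partial^{\circ} f(x^{\ast})$, and since $0\in -\partial^{\circ} f(x^{\ast})$ is equivalent to $0\in \partial^{\circ} f(x^{\ast})$, the conclusion follows. Alternatively, one can argue directly by selecting the moving path $y = x^{\ast}-tv$, for which the quotient becomes $\frac{f(x^{\ast})-f(x^{\ast}-tv)}{t}\geq 0$ by local maximality, again forcing $f^{\circ}(x^{\ast};v)\geq 0$.

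The point to handle with care — and essentially the only obstacle — is the extra base-point variable $y$ in the Clarke directional derivative. Unlike the ordinary one-sided directional derivative, $f^{\circ}$ takes a $\limsup$ over nearby $y$, so I cannot simply evaluate at $x^{\ast}$ and claim equality; I can only feed in a specific path to produce a lower bound for the $\limsup$. Making this ``the $\limsup$ dominates a chosen path'' step precise, and selecting the right path in each case ($y=x^{\ast}$ for the minimum, $y=x^{\ast}-tv$ for the maximum), is where the argument genuinely lives; the local Lipschitz hypothesis ensures all the quotients are finite, so no boundedness issues intervene.
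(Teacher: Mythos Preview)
Your proposal is correct and follows essentially the same approach as the paper: reduce the maximum case to the minimum case via $\partial^{\circ}(-f)=-\partial^{\circ}f$, and for the minimum case bound the $\limsup$ from below by freezing $y=x^{\ast}$ so that each difference quotient $\frac{f(x^{\ast}+tv)-f(x^{\ast})}{t}$ is nonnegative. The paper's proof is exactly this, only more tersely written; your added ``moving path'' alternative for the maximum case is a nice bonus but not needed.
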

\begin{proof}
Since $\partial(-f(x))=-\partial f(x)$, we only need to consider one situation. Suppose $x^{\ast}$ is the minimum point, thus
\begin{equation*}
    \begin{aligned}
    f^{\circ}(x^{\ast} ; v) &=\limsup _{y \rightarrow x^{\ast},\:t\to 0^{+}} \frac{f(y+t v)-f(y)}{t}\\
    &\geq \limsup_{t\to 0^{+}} \frac{f(x^{\ast} + tv) - f(x^{\ast})}{t}\\
    &\geq 0,\quad \forall v\in \mathbb{R}^n
    \end{aligned}
\end{equation*}
which implies that $0\in \partial^{\circ} f(x^{\ast})$ from the difinition of Clarke subdifferential.
\end{proof}

\begin{theorem}
Let $I$ be a finite set and for all $i\in I$ let $f_{i}(x)$ be locally Lipschitz functions around $x^{\ast}$. Then the function $f(x) = \max \{ f_{i}(x)\:\vert\: i\in I \}$ satisfies
\begin{align*}
    \partial^{\circ} f(x^{\ast}) \subseteq co\,\{ \partial^{\circ} f_{i}(x^{\ast}) \:\vert\: i\in I(x^{\ast}) \},
\end{align*}
where $I(x^{\ast}) = \{i\in I \:\vert\: f_{i}(x^{\ast}) = f(x^{\ast}) \}$.
\end{theorem}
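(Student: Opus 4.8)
The plan is to convert the claimed set inclusion into an inequality between support functions and then invoke Theorem 2.2.2. Both sides are convex and compact: $\partial^\circ f(x^{\ast})$ is convex compact by Theorem 5.1.1, and the set $C := co\,\{\partial^\circ f_i(x^{\ast}) : i\in I(x^{\ast})\}$, being the convex hull of a finite union of convex compact sets, is again convex and compact. Hence, by Theorem 2.2.2, it would suffice to prove
\[
\delta^{\ast}_{\partial^\circ f(x^{\ast})}(v) \leq \delta^{\ast}_{C}(v), \qquad \forall\, v\in\mathbb{R}^n.
\]

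First I would record the support-function reading of the generalized gradient. Since $v\mapsto f^\circ(x^{\ast};v)$ is finite, positively homogeneous, and subadditive (all immediate from Definition 5.1.2 and the Lipschitz bound of Theorem 5.1.1), it is the support function of precisely the convex compact set $\{\xi : \xi^T v \leq f^\circ(x^{\ast};v)\ \forall v\}$, which is $\partial^\circ f(x^{\ast})$ by Definition 5.1.3; thus $\delta^{\ast}_{\partial^\circ f(x^{\ast})}(v) = f^\circ(x^{\ast};v)$, and likewise $\delta^{\ast}_{\partial^\circ f_i(x^{\ast})}(v) = f_i^\circ(x^{\ast};v)$. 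Because the support function of the convex hull of a union equals the pointwise maximum of the individual support functions, one gets $\delta^{\ast}_{C}(v) = \max_{i\in I(x^{\ast})} f_i^\circ(x^{\ast};v)$. The entire statement therefore collapses to the scalar inequality
\begin{equation*}
f^\circ(x^{\ast};v) \leq \max_{i\in I(x^{\ast})} f_i^\circ(x^{\ast};v), \qquad \forall\, v\in\mathbb{R}^n. \tag{$\star$}
\end{equation*}

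To prove $(\star)$, I would fix $v$ and choose sequences $y_k\to x^{\ast}$ and $t_k\to 0^{+}$ realizing the $\limsup$ defining $f^\circ(x^{\ast};v)$. For each $k$, select an index $i_k\in I$ at which the maximum defining $f$ is attained at $y_k+t_k v$, so $f(y_k+t_k v) = f_{i_k}(y_k+t_k v)$. As $I$ is finite, I may pass to a subsequence with $i_k\equiv i$. Using $f(y_k)\geq f_i(y_k)$,
\[
\frac{f(y_k+t_k v)-f(y_k)}{t_k} \leq \frac{f_i(y_k+t_k v)-f_i(y_k)}{t_k},
\]
and the right-hand $\limsup$ is at most $f_i^\circ(x^{\ast};v)$ since $y_k\to x^{\ast}$, $t_k\to 0^{+}$, giving $f^\circ(x^{\ast};v)\leq f_i^\circ(x^{\ast};v)$. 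It remains to see that $i\in I(x^{\ast})$: each $f_j$ is Lipschitz hence continuous near $x^{\ast}$ and $y_k+t_k v\to x^{\ast}$, so $f_i(y_k+t_k v)\to f_i(x^{\ast})$ and $f(y_k+t_k v)\to f(x^{\ast})$; the identity $f_i(y_k+t_k v)=f(y_k+t_k v)$ forces $f_i(x^{\ast})=f(x^{\ast})$, i.e. $i\in I(x^{\ast})$. This yields $(\star)$.

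I expect the selection-and-limit step to be the crux. Inequality $(\star)$ is easy once one knows the active index surviving in the limit lies in $I(x^{\ast})$, so the real work is the subsequence extraction (legitimate only because $I$ is finite) together with the continuity argument promoting ``active at $y_k+t_k v$'' to ``active at $x^{\ast}$''. Everything upstream — the two support-function identities and the convex-hull formula — is routine given Theorems 2.2.2 and 5.1.1, so the argument stands or falls on $(\star)$.
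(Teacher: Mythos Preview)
The paper does not actually supply a proof of this theorem; it is stated and immediately followed by a remark contrasting it with Theorem~4.1.1, with the implicit reference being Clarke's monograph. Your argument is correct and is essentially the standard proof one finds there: reduce the set inclusion to a comparison of support functions via Theorem~2.2.2, identify the support function of each Clarke subdifferential with the corresponding generalized directional derivative, and then establish the scalar inequality $f^{\circ}(x^{\ast};v)\le \max_{i\in I(x^{\ast})} f_i^{\circ}(x^{\ast};v)$ by selecting an active index along a sequence realizing the $\limsup$, extracting a constant subsequence (finiteness of $I$), and using continuity to push the active index into $I(x^{\ast})$. There is nothing to correct.
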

\begin{framed}
\noindent \emph{Remark:} Compared with Theorem 4.1.1, the inclusion relation of Clarke subdifferential of supremum function is different from that of the subdifferential of supremum function .
\end{framed}

\subsection{Ekeland Variational Principle}
In this part, we only focus on Ekeland variational principle which holds in any complete metric space. Roughly speaking, a variational principle asserts that, for any lower semicontinuous function which is bounded below, one can add a small perturbation to make it attain a minimum. In fact, there are many other variational principles in modern variational analysis. The reader who shows an interest in them may refer to Borwein's book \cite{borwein2004techniques}.
\begin{theorem}
\textbf{(Ekeland Variational Principle)} Let $(X,d)$ be a complete metric space and let $f:X\mapsto \left( -\infty,+\infty \right]$ be a lower semicontinuous function bounded from below. Suppose that $\varepsilon > 0$ and $z\in X$ satisfy
\begin{align*}
    f(z) < \inf_{x\in X} f(x) + \varepsilon.
\end{align*}
Then there exists $y\in X$ such that
\begin{flushleft}
$(i)\:d(z,y) \leq 1$,\\
$(ii)\:f(y)+\varepsilon d(z,y) \leq f(z)$,\\
$(iii)\:f(x)+\varepsilon d(x,y) \geq f(y)\:(\forall x\in X)$.
\end{flushleft}
\end{theorem}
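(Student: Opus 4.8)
The plan is to introduce on $X$ the partial order
\[
    x \preceq w \iff f(x) + \varepsilon\, d(x,w) \le f(w),
\]
and to produce a $\preceq$-minimal element $y$ lying below the prescribed point $z$. First I would check that $\preceq$ really is a partial order: reflexivity is immediate, antisymmetry follows because $x\preceq w$ together with $w\preceq x$ forces $\varepsilon\, d(x,w)\le 0$ and hence $x=w$, and transitivity is a one-line consequence of the triangle inequality. The point of this reformulation is that conclusions (ii) and (iii) collapse into a single geometric demand: $y\preceq z$ is exactly (ii) (using symmetry of $d$), while $y$ being minimal means no $x\ne y$ satisfies $x\preceq y$, which is precisely (iii).

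Rather than invoke Zorn's lemma, I would construct $y$ by an explicit greedy iteration that also secures (i). Set $z_0 = z$ and, having chosen $z_n$, consider the lower set $S_n = \{x \in X : x \preceq z_n\}$. This set is nonempty, since it contains $z_n$, and, crucially, closed: the map $x \mapsto f(x) + \varepsilon\, d(x,z_n)$ is lower semicontinuous, being the sum of the lsc function $f$ and the continuous function $d(\cdot, z_n)$, so its sublevel set $S_n$ is closed. Because $f$ is bounded below, $\inf_{S_n} f \ge \inf_X f > -\infty$, so I may pick $z_{n+1}\in S_n$ with $f(z_{n+1}) \le \inf_{S_n} f + 2^{-n}$. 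Transitivity of $\preceq$ gives $S_{n+1}\subseteq S_n$, so the $S_n$ form a decreasing chain of nonempty closed sets.

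The heart of the argument, and the step I expect to be the main obstacle, is the estimate that $\operatorname{diam} S_n \to 0$. For $x \in S_{n+1}$ I would combine $x \preceq z_{n+1}$, i.e.\ $\varepsilon\, d(x,z_{n+1}) \le f(z_{n+1}) - f(x)$, with the two one-sided bounds $f(x) \ge \inf_{S_n} f$ (as $x\in S_{n+1}\subseteq S_n$) and $f(z_{n+1}) \le \inf_{S_n} f + 2^{-n}$ to obtain $d(x,z_{n+1}) \le 2^{-n}/\varepsilon$; the triangle inequality then bounds $\operatorname{diam} S_{n+1}$ by $2^{1-n}/\varepsilon \to 0$. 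Making the approximate-minimization rate and the two $f$-bounds line up correctly is the delicate point of the whole proof.

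With the diameters shrinking, completeness of $(X,d)$ and the Cantor intersection theorem yield a unique point $y \in \bigcap_n S_n$. I would then read off the three conclusions. Since $y\in S_0$ we have $y\preceq z_0 = z$, which is (ii); feeding this into $\varepsilon\, d(y,z) \le f(z) - f(y) \le f(z) - \inf_X f < \varepsilon$, where the last inequality is the hypothesis on $z$ and $f(y)\ge \inf_X f$, gives $d(y,z) < 1$, hence (i). For (iii), if some $x\ne y$ satisfied $f(x)+\varepsilon\, d(x,y) < f(y)$, then $x\preceq y\preceq z_n$ for every $n$, so $x\in\bigcap_n S_n=\{y\}$, contradicting $x\ne y$; therefore $f(x)+\varepsilon\, d(x,y)\ge f(y)$ for all $x$, which is (iii).
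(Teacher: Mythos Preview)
Your proof is correct and follows essentially the same iterative scheme as the paper's: both build the nested sets $S_n=\{x:f(x)+\varepsilon\,d(x,z_n)\le f(z_n)\}$, choose $z_{n+1}\in S_n$ as an approximate minimizer of $f$ on $S_n$, and extract $y$ from $\bigcap_n S_n$. The only differences are presentational---you organize the argument through the partial order $\preceq$ and the Cantor intersection theorem (using closedness of $S_n$) with rate $2^{-n}$, whereas the paper shows $(z_n)$ is Cauchy directly and uses a halving rate---but the underlying mechanism is identical.
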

\begin{proof}
Define a sequence ($z_{i}$) by induction starting with $z_{0}=z$. Suppose that we have defined $z_{i}$. Set
\begin{align*}
    S_{i} = \left \{ x\in X \:\middle\vert\: f(x)+\varepsilon d(x,z_{i}) \leq f(z_{i}) \right \}
\end{align*}
and consider two possible cases:(a) $\inf_{S_{i}} f = f(z_{i})$. Then we define $z_{i+1} = z_{i}$.(b) $\inf_{S_{i}} f < f(z_{i})$. We choose $z_{i+1}\in S_{i}$ such that
\begin{align}
    f(z_{i+1}) < \inf_{S_{i}} f + \frac{1}{2}[f(z_{i})-\inf_{S_{i}} f] = \frac{1}{2}[f(z_{i})+\inf_{S_{i}} f] < f(z_{i}).
\end{align}
We show that $(z_{i})$ is a Cauchy sequence. In fact, if (a) ever happens then $z_{i}$ is stationary for $i$ large. Otherwise,
\begin{align}
    \varepsilon d(z_{i},z_{i+1}) \leq f(z_{i})-f(z_{i+1}).
\end{align}
Adding (20) up from $i$ to $j-1>i$ we have
\begin{align}
    \varepsilon d(z_{i},z_{j}) \leq f(z_{i})-f(z_{j}).
\end{align}
Observe that the sequence $(f(z_{i}))$ is decreasing and bounded from below by $\inf_{X} f$, and therefore convergent. We conclude from (21) that $(z_{i})$ is Cauchy. Let $y = \lim_{i\to \infty}z_{i}$. We show that $y$ satisfies the conclusions of the theorem. Setting $i=0$ in (21) we have
\begin{align}
    \varepsilon d(z,z_{j}) + f(z_{j}) \leq f(z).
\end{align}
Taking limits as $j\to \infty$ yields (ii). Since $f(z)-f(y) \leq f(z) - \inf_{X} f < \varepsilon$, (i) follows from (ii). It remains to show that $y$ satisfies (iii). Fixing $i$ in (21) and taking limits as $j\to \infty$ yields $y\in S_{i}$. That is to say
\begin{align*}
    y \in \bigcap_{i=1}^{\infty} S_{i}.
\end{align*}
On the other hand, if $x\in \cap_{i=1}^{\infty} S_{i}$ then, for all $i = 1,2,\dots,$
\begin{align}
    \varepsilon d(x,z_{i+1}) \leq f(z_{i+1}) - f(x) \leq f(z_{i+1}) - \inf_{S_{i}} f.
\end{align}
It follows that from (19) that $f(z_{i+1})-\inf_{S_{i}} f \leq f(z_{i})-f(z_{i+1})$, and therefore $\lim_{i}[f(z_{i+1})-\inf_{S_{i}}f] = 0$. Taking limits in (23) as $i\to \infty$ we have $\varepsilon d(x,y) = 0$. It follows that
\begin{align}
    \bigcap_{i=1}^{\infty} S_{i} = \{ y \}.
\end{align}
Notice that the sequence of sets $(S_{i})$ is nested, i.e., for any $i$, $S_{i+1} \subseteq S_{i}$. In fact, for any $x\in S_{i+1}$, $f(x)+\varepsilon d(x,z_{i+1})\leq f(z_{i+1})$ and $z_{i+1}\in S_{i}$ yields
\begin{equation}
    \begin{aligned}
    f(x) + \varepsilon d(x,z_{i}) &\leq f(x) + \varepsilon d(x,z_{i+1}) + \varepsilon d(z_{i},z_{i+1})\\
    &\leq f(z_{i+1}) + \varepsilon d(z_{i},z_{i+1}) \leq f(z_{i}).
    \end{aligned}
\end{equation}
which implies that $x\in S_{i}$. Now, for any $x\neq y$, it follows from (24) that when $i$ sufficiently large $x\notin S_{i}$. Thus, $f(x)+\varepsilon d(x,z_{i})\geq f(z_{i})$. Taking limits as $i\to \infty$ we arrive at (iii).
\end{proof}

\begin{corollary}
Let $(X,d)$ be a complete metric space and let $f:X\mapsto \left( -\infty,+\infty \right]$ be a lower semicontinous function bounded from below. Suppose that $\varepsilon > 0$ and $z\in X$ satisfy
\begin{align*}
    f(z) < \inf_{x\in X} f(x) + \varepsilon.
\end{align*}
Then for any $\lambda > 0$ there exists $y\in X$ such that
\begin{flushleft}
$(i)\:d(z,y) \leq \lambda$,\\
$(ii)\:f(y)+(\varepsilon/\lambda) d(z,y) \leq f(z)$,\\
$(iii)\:f(x)+(\varepsilon/\lambda) d(x,y) \geq f(y)\:(\forall x\in X\setminus \{y\})$.
\end{flushleft}
\end{corollary}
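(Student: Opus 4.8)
The plan is to deduce Corollary 5.2.1 from Theorem 5.2.1 by applying the latter to a rescaled copy of the metric. The idea is that the factor $\lambda$ in the conclusions is exactly what one gets by running the original principle on $(X,d_{\lambda})$, where
\begin{align*}
    d_{\lambda}(x,y) = \frac{d(x,y)}{\lambda}, \quad \forall x,y \in X.
\end{align*}
Since $\lambda > 0$, multiplying a metric by the positive constant $1/\lambda$ preserves nonnegativity, symmetry, the triangle inequality, and the property that $d_{\lambda}(x,y) = 0$ iff $x = y$, so $d_{\lambda}$ is again a metric on $X$.

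First I would verify that the hypotheses of Theorem 5.2.1 hold for the space $(X,d_{\lambda})$. Completeness is inherited: a sequence is $d_{\lambda}$-Cauchy iff it is $d$-Cauchy, and $d_{\lambda}$-convergent iff it is $d$-convergent, because the two metrics differ only by the scalar $1/\lambda$; hence every $d_{\lambda}$-Cauchy sequence converges. The function $f$ is untouched, so it remains lower semicontinuous (the two metrics generate the same topology) and bounded below, and the hypothesis $f(z) < \inf_{x\in X} f(x) + \varepsilon$ does not involve the metric at all, so it continues to hold verbatim with the same $\varepsilon$ and $z$.

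Next I would invoke Theorem 5.2.1 for $f$ on $(X,d_{\lambda})$, obtaining a point $y \in X$ with $d_{\lambda}(z,y) \leq 1$, with $f(y) + \varepsilon\, d_{\lambda}(z,y) \leq f(z)$, and with $f(x) + \varepsilon\, d_{\lambda}(x,y) \geq f(y)$ for all $x \in X$. Substituting $d_{\lambda} = d/\lambda$ and using $\varepsilon\, d_{\lambda} = (\varepsilon/\lambda)\, d$ translates these three statements directly into $(i)$ $d(z,y)\leq\lambda$, $(ii)$ $f(y)+(\varepsilon/\lambda)d(z,y)\leq f(z)$, and $(iii)$ $f(x)+(\varepsilon/\lambda)d(x,y)\geq f(y)$; the last holds for every $x\in X$, hence in particular for $x \in X\setminus\{y\}$ as stated.

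I do not expect a genuine obstacle here, since the argument is a pure change of scale rather than a fresh construction. The only point deserving a word of care is the claim that completeness and lower semicontinuity survive the rescaling, and this is immediate because $d$ and $d_{\lambda}$ are uniformly equivalent and induce identical Cauchy sequences and the same topology. Everything else is bookkeeping in moving the factor $1/\lambda$ through the three inequalities.
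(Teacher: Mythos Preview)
Your argument is correct: rescaling the metric to $d_{\lambda}=d/\lambda$, checking that completeness and lower semicontinuity are preserved (since $d$ and $d_{\lambda}$ are uniformly equivalent and induce the same topology), and then reading off $(i)$--$(iii)$ from Theorem~5.2.1 is exactly the standard derivation. The paper states Corollary~5.2.1 without proof, so there is no argument to compare against; your proposal supplies the intended one-line reduction.
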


\subsection{Lagrangian Methods for Locally Lipschitz Programming}
In this section, generalized Fritz John condition of locally Lipschitz optimization will be discussed and presented. Necessary optimality condition of problem \textrm{(P2)} will be obtained easily as the proof in Theorem 4.2.2. 
\par We turn our attention mainly to probelm \textrm{(P1)} and we will see the power of Ekeland Variational Principle in the proof. The part of work is devoted to Francis H.Clarke \cite{clarke1990optimization}.
\begin{theorem}
\textbf{(Fritz John Condition)} Let $f(x), g_{i}(x)\,(i = 1,\dots,m)$ are all locally Lipschitz functions on $\mathbb{R}^n$. Assume that $f(x)$ attains its minimum at $x^{\ast}$ of problem $\mathrm{(P2)}$, then exists a sequence $\overline{\lambda_{i}}\,(i = 0,1,\dots,m)$ satisfying
\begin{equation}
    \begin{aligned}
    & 0\in \overline{\lambda_{0}} \partial^{\circ} f(x^{\ast}) + \sum_{i=1}^m \overline{\lambda_{i}} \partial^{\circ}\partial g_{i}(x^{\ast}),\\
    & \overline{\lambda_{i}}g_{i}(x^{\ast}) = 0,\quad i = 1,\dots,m.
    \end{aligned}
\end{equation}
\end{theorem}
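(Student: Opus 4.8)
The plan is to mirror the convex argument of Theorem 4.2.2, replacing the convex subdifferential machinery by its Clarke counterpart. First I would introduce the merit function
\[ F(x) = \max\{ f(x) - f(x^{\ast}),\, g_{1}(x),\, \dots,\, g_{m}(x) \}. \]
Since $f$ and each $g_{i}$ are locally Lipschitz near $x^{\ast}$ and a finite maximum of locally Lipschitz functions is again locally Lipschitz, $F$ is locally Lipschitz around $x^{\ast}$, so its Clarke subdifferential is available and Theorems 5.1.3 and 5.1.4 apply to it.

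Next I would verify that $x^{\ast}$ is a local minimizer of $F$. At $x^{\ast}$ feasibility gives $g_{i}(x^{\ast}) \leq 0$ while $f(x^{\ast}) - f(x^{\ast}) = 0$, so $F(x^{\ast}) = 0$. For $x$ near $x^{\ast}$, if $x$ is infeasible then some $g_{i}(x) > 0$ forces $F(x) > 0$; if $x$ is feasible then local optimality of $x^{\ast}$ for $\mathrm{(P2)}$ gives $f(x) \geq f(x^{\ast})$, whence $F(x) \geq f(x) - f(x^{\ast}) \geq 0$. In either case $F(x) \geq 0 = F(x^{\ast})$, so $x^{\ast}$ is a local minimizer of $F$, and Theorem 5.1.3 yields $0 \in \partial^{\circ} F(x^{\ast})$.

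The decisive step is to open up $\partial^{\circ} F(x^{\ast})$ via the maximum rule. Writing the component functions as $f - f(x^{\ast})$ (whose Clarke subdifferential equals $\partial^{\circ} f(x^{\ast})$) together with the $g_{i}$, and noting that the active index set always contains the objective component (since $f(x^{\ast}) - f(x^{\ast}) = 0 = F(x^{\ast})$) along with exactly those $i$ for which $g_{i}(x^{\ast}) = 0$, Theorem 5.1.4 gives
\[ 0 \in \partial^{\circ} F(x^{\ast}) \subseteq co\left( \partial^{\circ} f(x^{\ast}) \cup \bigcup_{i \in I(x^{\ast})} \partial^{\circ} g_{i}(x^{\ast}) \right), \]
where $I(x^{\ast}) = \{ i \mid g_{i}(x^{\ast}) = 0 \}$. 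Unpacking membership in this convex hull produces $\xi \in \partial^{\circ} f(x^{\ast})$, elements $\xi_{i} \in \partial^{\circ} g_{i}(x^{\ast})$, and weights $\overline{\lambda_{0}}, \overline{\lambda_{i}} \geq 0$ with $\overline{\lambda_{0}} + \sum_{i \in I(x^{\ast})} \overline{\lambda_{i}} = 1$ such that $0 = \overline{\lambda_{0}} \xi + \sum_{i \in I(x^{\ast})} \overline{\lambda_{i}} \xi_{i}$. Finally, setting $\overline{\lambda_{i}} = 0$ for $i \notin I(x^{\ast})$ extends the sum to all constraints and simultaneously secures complementary slackness $\overline{\lambda_{i}} g_{i}(x^{\ast}) = 0$, since $g_{i}(x^{\ast}) = 0$ on the active set and $\overline{\lambda_{i}} = 0$ off it, which is exactly the asserted conclusion.

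I expect the main point requiring care — rather than a genuine obstacle — to be the direction of the maximum rule. Unlike the convex case (Theorem 4.1.2, which gives equality), for the Clarke subdifferential Theorem 5.1.4 supplies only the inclusion $\partial^{\circ} F(x^{\ast}) \subseteq co\{\dots\}$; fortunately this is precisely the inclusion needed here, since the argument flows from $0 \in \partial^{\circ} F(x^{\ast})$ toward membership in the convex hull, never the reverse. The only other thing to check carefully is that constrained local minimality of $x^{\ast}$ genuinely transfers to unconstrained local minimality of $F$, which hinges on infeasible points making $F$ strictly positive.
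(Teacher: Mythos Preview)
Your proposal is correct and follows precisely the route the paper indicates: construct $F(x)=\max\{f(x)-f(x^{\ast}),g_1(x),\dots,g_m(x)\}$, observe that $x^{\ast}$ minimizes $F$ locally so that $0\in\partial^{\circ}F(x^{\ast})$ by Theorem~5.1.3, and then unpack via the Clarke maximum rule (Theorem~5.1.4) exactly as in the convex analogue Theorem~4.2.2. Your additional remarks about the direction of the inclusion in Theorem~5.1.4 and the transfer of constrained to unconstrained minimality are spot on and in fact supply more detail than the paper's own terse proof.
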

\begin{proof}
Construct the following function
\begin{align*}
    F(x) = \max \left \{ f(x)-f(x^{\ast}),g_1(x),\dots,g_m(x) \right \}.
\end{align*}
Applying Theorem 5.1.3 and Theorem 5.1.4, together with the proof of Theorem 4.2.2, it is easy to acquire the required result.
\end{proof}

\begin{theorem}
\textbf{(Fritz John Condition)} Let $f(x), g_{i}(x)\,(i = 1,\dots,m),\\ h_{j}(x)\,(j = 1,\dots,n)$ are locally Lipschitz functions. Assume that $x^{\ast}$ is the minimum point of problem $\mathrm{(P1)}$, then exist $\overline{\lambda_{i}}\geq 0,i = 0,1,\dots,m$ and $\overline{\mu_{j}},j = 1,\dots,n$ such that
\begin{equation}
    \begin{aligned}
    & 0\in \overline{\lambda_{0}}\partial^{\circ} f(x^{\ast}) + \sum_{i=1}^m \overline{\lambda_{i}} \partial^{\circ} g_{i}(x^{\ast}) + \sum_{j=1}^n \overline{\mu_{j}} \partial^{\circ} h_{j}(x^{\ast}),\\
    & \overline{\lambda_{i}} g_{i}(x^{\ast}) = 0,\quad i = 1,\dots,m
    \end{aligned}
\end{equation}
\end{theorem}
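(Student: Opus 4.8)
The plan is to follow neither the geometric argument of Section 3 nor the max-function reduction used for Theorem 5.3.1, because the equality constraints $h_j(x)=0$ cannot be encoded one-sidedly by a single $\max$: feeding both $h_j\le 0$ and $-h_j\le 0$ into a max function would only produce separate multipliers for $\partial^\circ h_j(x^\ast)$ and $\partial^\circ(-h_j)(x^\ast)=-\partial^\circ h_j(x^\ast)$, which collapses to the desired $\overline{\mu_j}\,\partial^\circ h_j(x^\ast)$ only when $h_j$ is strictly differentiable. Instead I would deploy the Ekeland variational principle. For $\varepsilon>0$, working on a closed ball $\overline{B}(x^\ast,\delta)$ on which local optimality holds (a complete metric space, so Theorem 5.2.1 applies), set
\[
F_\varepsilon(x)=\Big[\big((f(x)-f(x^\ast)+\varepsilon)_+\big)^2+\sum_{i=1}^m\big((g_i(x))_+\big)^2+\sum_{j=1}^n h_j(x)^2\Big]^{1/2},
\]
where $(a)_+=\max\{a,0\}$. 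Then $F_\varepsilon(x^\ast)=\varepsilon$, and a zero of $F_\varepsilon$ would be a feasible point with $f<f(x^\ast)$, so $F_\varepsilon>0$ on the ball; by continuity its infimum $c$ there is strictly positive, whence $F_\varepsilon(x^\ast)=\varepsilon<c+\varepsilon$. Applying the corollary of Ekeland's principle with $\lambda=\sqrt\varepsilon$ yields a point $x_\varepsilon$ with $\|x_\varepsilon-x^\ast\|\le\sqrt\varepsilon$ that minimises $x\mapsto F_\varepsilon(x)+\sqrt\varepsilon\,\|x-x_\varepsilon\|$, and which for small $\varepsilon$ is interior to the ball, hence a genuine local minimiser.

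By the extremality principle (Theorem 5.1.3) applied to this perturbed functional and the sum rule (Theorem 5.1.2(ii)), together with $\partial^\circ\|\cdot-x_\varepsilon\|(x_\varepsilon)\subseteq cl\,B(0,1)$ from Theorem 5.1.1, I obtain $0\in\partial^\circ F_\varepsilon(x_\varepsilon)+\sqrt\varepsilon\,cl\,B(0,1)$; equivalently, $\partial^\circ F_\varepsilon(x_\varepsilon)$ contains a vector of norm at most $\sqrt\varepsilon$.

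The heart of the argument is to differentiate $F_\varepsilon$. Since $F_\varepsilon(x_\varepsilon)>0$, the Euclidean norm is smooth at the nonzero inner value, so the Clarke chain rule expands $\partial^\circ F_\varepsilon(x_\varepsilon)$ into a combination of the generalized gradients of the components, weighted by the components of the unit vector giving the norm's gradient. Writing $r=F_\varepsilon(x_\varepsilon)$ and setting $\lambda_0^\varepsilon=(f(x_\varepsilon)-f(x^\ast)+\varepsilon)_+/r$, $\lambda_i^\varepsilon=(g_i(x_\varepsilon))_+/r$ and $\mu_j^\varepsilon=h_j(x_\varepsilon)/r$ gives nonnegative $\lambda_0^\varepsilon,\lambda_i^\varepsilon$, real $\mu_j^\varepsilon$, normalisation $(\lambda_0^\varepsilon)^2+\sum_i(\lambda_i^\varepsilon)^2+\sum_j(\mu_j^\varepsilon)^2=1$, and (noting that where $g_i(x_\varepsilon)\le 0$ the coefficient vanishes, so the positive part's subdifferential may be replaced by $\partial^\circ g_i$)
\[
0\in\lambda_0^\varepsilon\,\partial^\circ f(x_\varepsilon)+\sum_{i=1}^m\lambda_i^\varepsilon\,\partial^\circ g_i(x_\varepsilon)+\sum_{j=1}^n\mu_j^\varepsilon\,\partial^\circ h_j(x_\varepsilon)+\sqrt\varepsilon\,cl\,B(0,1).
\]

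Finally I would pass to the limit $\varepsilon\to 0$. The normalised multiplier vectors lie on the unit sphere, so along a subsequence they converge to $(\overline{\lambda_0},\overline{\lambda_i},\overline{\mu_j})$ of unit norm, guaranteeing that not all multipliers vanish, while $x_\varepsilon\to x^\ast$. Invoking the uniform bound on the generalized gradients (Theorem 5.1.1) and the upper semicontinuity (closed graph) of the set-valued map $x\rightrightarrows\partial^\circ(\cdot)(x)$, I pass the inclusion to the limit — the residual $\sqrt\varepsilon\,cl\,B(0,1)$ shrinking to $\{0\}$ — to reach $0\in\overline{\lambda_0}\,\partial^\circ f(x^\ast)+\sum_i\overline{\lambda_i}\,\partial^\circ g_i(x^\ast)+\sum_j\overline{\mu_j}\,\partial^\circ h_j(x^\ast)$. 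Complementary slackness $\overline{\lambda_i}g_i(x^\ast)=0$ is then immediate: if $g_i(x^\ast)<0$, continuity forces $g_i(x_\varepsilon)<0$ for small $\varepsilon$, so $\lambda_i^\varepsilon=0$ and hence $\overline{\lambda_i}=0$. I expect the two genuine obstacles to be establishing the Clarke chain rule for the composite penalty (the norm composed with the positive-part and constraint maps), which is the only ingredient beyond the stated sum and max rules, and rigorously justifying the limit passage via the upper semicontinuity of $\partial^\circ$.
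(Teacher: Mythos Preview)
Your argument is correct and is in fact the same proof as the paper's, only written in a different coordinate system. The paper's penalty is
\[
F(x)=\max_{t\in T}\Bigl\{\lambda_0\bigl(f(x)-f(x^\ast)+\varepsilon\bigr)+\lambda^{T}g(x)+\mu^{T}h(x)\Bigr\},
\qquad
T=\{(\lambda_0,\lambda,\mu):\lambda_0,\lambda\ge 0,\ \|(\lambda_0,\lambda,\mu)\|=1\},
\]
and a short optimisation over $T$ shows that this maximum equals exactly your
\[
\Bigl[\bigl((f(x)-f(x^\ast)+\varepsilon)_+\bigr)^2+\textstyle\sum_i\bigl((g_i(x))_+\bigr)^2+\sum_j h_j(x)^2\Bigr]^{1/2}.
\]
So the two penalties coincide; both proofs then apply Ekeland with $\lambda=\sqrt\varepsilon$, differentiate at the approximate minimiser, and pass to the limit via upper semicontinuity of $\partial^\circ$. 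The only difference is how the differentiation step is packaged: the paper reads off the multipliers as the unique maximiser $t_u\in T$ (so that $\partial^\circ F(u)\subseteq\partial^\circ L(u,t_u)$ via the max rule over a compact parameter set), whereas you obtain the same vector $t_u=(\lambda_0^\varepsilon,\lambda_i^\varepsilon,\mu_j^\varepsilon)$ by the smooth chain rule for the Euclidean norm at a nonzero argument. Your formulation makes the normalisation and complementary slackness completely transparent; the paper's support-function formulation sidesteps the need to justify a Clarke chain rule for the composite, replacing it with the (equally standard) Clarke envelope rule for $\max_{t\in T} L(x,t)$.
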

\begin{proof}
Given $\varepsilon > 0$, we define $T$ and $F(x)$ as follows:
\begin{align*}
    T = \left \{ t = (\overline{\lambda_{0}}, \overline{\lambda}^T, \overline{\mu}^T)^T\in \mathbb{R}^{1+m+n} \:\middle\vert\: \overline{\lambda_{0}},\overline{\lambda} \geq 0,\parallel (\overline{\lambda_{0}},\overline{\lambda},\overline{\mu}) \parallel = 1 \right \},\\
    F(x) = \max_{(\overline{\lambda_{0}},\overline{\lambda}^T,\overline{\mu}^T)^T\in T} \left \{ \overline{\lambda_{0}}(f(x)-f(x^{\ast})+\varepsilon) + \overline{\lambda}^T g(x) + \overline{\mu}^T h(x)\right \},
\end{align*}
where $\overline{\lambda_{0}}\in \mathbb{R},\overline{\lambda} \in \mathbb{R}^m,\overline{\mu} \in \mathbb{R}^n$,
\begin{equation*}
    \begin{aligned}
    g(x) = (g_{1}(x),\dots,g_{m}(x))^T,\\
    h(x) = (h_{1}(x),\dots,h_{n}(x))^T,
    \end{aligned}
\end{equation*}
It is easy to proof that $F(x)$ is Lipschitz at $x^{\ast}$ and $F(x^{\ast})=\varepsilon$. On the other hand, we have $F(x)>0,x\in \mathbb{R}^n$. If not, there exists $y\in \mathbb{R}^n$ such that $F(y)\leq 0$, which implies
\begin{align*}
    g_{i}(y)\leq 0, h_{j}(y)\leq 0, f(y)\leq f(x^{\ast})-\varepsilon.
\end{align*}
This contradicts the fact that $x^{\ast}$ is the minimum point. Hence,
\begin{align*}
    F(x^{\ast})\leq \inf_{x\in \mathbb{R}^n} F(x) + \varepsilon.
\end{align*}
It follows from Corollary 5.2.1 that there exists $u \in B(x^{\ast},\sqrt{\varepsilon})$ such that
\begin{align*}
    F(u)\leq F(x) + \sqrt{\varepsilon} \parallel x - u \parallel
\end{align*}
for any $x\in \mathbb{R}^n$, which implies $F(x) + \sqrt{\varepsilon}\parallel x - u \parallel$ attains its minimum at $x = u$. Applying Theorem 5.1.1 and the fact that $\partial^{\circ} \parallel x - u \parallel \subseteq \partial^{\circ} d_{\{u\}}(x) \subseteq cl\,B(0,1)$, we have the fact that
\begin{align*}
    0\in \partial^{\circ} F(u) + cl\,B(0,\sqrt{\varepsilon}).
\end{align*}
We proceed to proof that the set-valued map 
\begin{align*}
    (t,x)\to \partial^{\circ} L(x,t)
\end{align*}
is upper semicontinuous, where $t = (\overline{\lambda_{0}},\overline{\lambda}^T,\overline{\mu}^T)^T$ and
\begin{align*}
    L(x,t) = \overline{\lambda_{0}} f(x) + \overline{\lambda}^T g(x) + \overline{\mu}^T h(x).
\end{align*}
Note that $\forall t_1,t_2\in T$, the following 
\begin{align*}
    x \to L(x,t_1)-L(x,t_2) = (t_1-t_2)^T (f(x),g(x),h(x))
\end{align*}
is a Lipschitz function and $L \parallel t_1 - t_2 \parallel$ is Lipschitz constant, where $L = \max \{ L_{f}, L_{g}, L_{h} \}$, thus
\begin{align*}
    \partial^{\circ} L(x,t_{1}) \subseteq \partial^{\circ} L(x,t_{2}) + L\parallel t_{1} - t_{2} \parallel cl\,B(0,1),
\end{align*}
which implies set-valued map $(t,x)\to \partial^{\circ} L(x,t)$ is upper semicontinuous. Since $F(u)>0$, then there exists unique $t_{u}\in T$ such that $F(x)$ attains its maximum at $t_{u}$, hence
\begin{align}
    0\in \partial^{\circ} L(u,t_u) + cl\,B(0,\sqrt{\varepsilon}).
\end{align}
Note that the $i\,th$ element of $\overline{\lambda}$, namely $\overline{\lambda}_{i}$ equals 0 if $g_{i}(u)<0$.  Taking limits $\varepsilon_{i} \to 0$, then we have $u_{i}\to x^{\ast}$ and there exists a subsequence of $\{t_{u_{i}}\}$ converging to some element in $T$. We now combine (28) with the upper semicontinuity of set-valued maps $(t,x)\to \partial^{\circ} L(x,t)$ to conclude the required result.
\end{proof}

\section{Quasidifferentiable Programming}
In this section, several concepts and properties of quasidifferentiable functions will be presented first\nocite{demianov1995constructive}. Then we will only focus on Fritz John condition in quasidifferentiable optimization with inequality constraints. This important result belongs to Luderer's paper \cite{luderer1991directional}. The readers wants to find more results in the case with equality and
inequality constraints may refer to paper \cite{gao2000demyanov}.
\subsection{Introduction to Quasidifferentials}
\begin{definition}
Let $f(x)$ be directionally differentiable at $x$ and there exist a pair of convex compact sets $\underline{\partial} f(x), \overline{\partial} f(x) \subseteq \mathbb{R}^n$ such that
\begin{align*}
     f'(x;d) = \max_{u \in \underline{\partial} f(x)} u^T d + \min_{v \in \overline{\partial} f(x)} v^T d,\quad \forall d \in \mathbb{R}^n
\end{align*}
We call $f(x)$ is quasidifferentiable at $x$ and $D f(x) = [\underline{\partial} f(x),  \overline{\partial} f(x)]$ is the quasidifferential of $f(x)$. $\underline{\partial} f(x)$ and $\overline{\partial} f(x)$ are called the subdifferential and superdifferential of $f(x)$, respectively. What's more, we call $f(x)$ subdifferentiable if $\overline{\partial} f(x) = \{0\}$ and $f(x)$ superdifferentible if $\underline{\partial} f(x) = \{0\}$.
\end{definition}

Before our discussion of quasisubdifferentials of quasidifferentiable functions, we first define addition and scalar multiplication of set pairs.
\begin{definition}
Let $U_1$, $V_1$, $U_2$, $V_2$ $\subseteq \mathbb{R}^n$ and $c$ is a constant, the addition and scalar multiplication of set pair $[U_1,\:V_1]$ and $[U_2,\:V_2]$ are defined as follows:
\begin{equation*}
    \begin{aligned}
        &[U_1,\:V_1] + [U_2,\:V_2] = [U_1 + U_2,\:V_1 + V_2],\\
        &c [U_1,\:U_2] = 
        \begin{cases}
        [c\,U_1,\:c\,U_2],\quad c \geq 0.\\
        [c\,U_2,\:c\,U_1],\quad c < 0.
        \end{cases}
    \end{aligned}
\end{equation*}
\end{definition}

\begin{theorem}
Let $f_{1}(x)$ and $f_{2}(x)$ be quasidifferentiable functions on $\mathbb{R}^n$, then $f_{1}(x)+f_{2}(x)$, $f_{1}(x)f_{2}(x)$ and $cf_{1}(x)\,(c\in \mathbb{R})$ are all quasidifferentiable functions. If $f_{1}(x) \neq 0$, then $\frac{1}{f_{1}(x)}$ is also quasidifferentiable. Furthermore, we have the following rules
\par
\begin{equation*}
    \begin{aligned}
    &D(f_{1}(x)+f_{2}(x)) = Df_{1}(x) + Df_{2}(x),\\
    &D(f_{1}(x)f_{2}(x)) = f_{1}(x) Df_{2}(x) + f_{2}(x) Df_{1}(x),\\
    &D(cf_{1}(x)) = c Df_{1}(x),\\
    &D(\frac{1}{f_{1}(x)}) = -\frac{1}{f_{1}^2(x)} Df_{1}(x).
    \end{aligned}
\end{equation*}
\end{theorem}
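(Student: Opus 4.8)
The plan is to reduce all four identities to the behaviour of the directional derivative $f'(x;\cdot)$, which by Definition 6.1.1 determines the quasidifferential through its representation as a sum of a support function and a ``lower support function''. The organising principle is the correspondence between convex compact sets and their support functions recorded in Theorem 2.2.2: two convex compact sets coincide precisely when their support functions agree, so it suffices to check, for each rule, that the directional derivative of the composite function can be written as $\max_{u \in U} u^T d + \min_{v \in V} v^T d$ with $[U,V]$ equal to the set pair prescribed on the right-hand side. I will use repeatedly the additivity of support functions under Minkowski sums, namely $\max_{w \in A + B} w^T d = \max_{a \in A} a^T d + \max_{b \in B} b^T d$ and its companion $\min_{w \in A+B} w^T d = \min_{a \in A} a^T d + \min_{b \in B} b^T d$, together with the sign identities $c \max_{a \in A} a^T d = \min_{w \in cA} w^T d$ and $c \min_{a \in A} a^T d = \max_{w \in cA} w^T d$ valid for $c < 0$.

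For the sum, I would start from the elementary fact that directional derivatives add, $(f_1 + f_2)'(x;d) = f_1'(x;d) + f_2'(x;d)$, insert the quasidifferential representations of $f_1'$ and $f_2'$, and then collect the two ``max'' terms and the two ``min'' terms using the Minkowski additivity above; this exhibits $\underline{\partial}f_1 + \underline{\partial}f_2$ as subdifferential and $\overline{\partial}f_1 + \overline{\partial}f_2$ as superdifferential, which is exactly $Df_1 + Df_2$ by Definition 6.1.2. For scalar multiplication I would treat $c \ge 0$ and $c < 0$ separately: when $c \ge 0$ the factor passes through both the max and the min unchanged, giving $[c\underline{\partial}f, c\overline{\partial}f]$; when $c < 0$ the sign identities swap the roles of the max and the min, producing $[c\overline{\partial}f, c\underline{\partial}f]$, and in both cases this matches the two-case definition of $c[\,\cdot\,,\,\cdot\,]$.

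For the product I would invoke the classical product rule for directional derivatives, $(f_1 f_2)'(x;d) = f_1(x)f_2'(x;d) + f_2(x)f_1'(x;d)$, which follows by expanding $f_i(x+td) = f_i(x) + t f_i'(x;d) + o(t)$ as $t \to 0^+$ and multiplying out. Each summand is now a scalar multiple of a quasidifferentiable directional derivative, so the already-established scalar-multiplication and sum rules yield $D(f_1 f_2) = f_1(x) Df_2 + f_2(x) Df_1$. The reciprocal rule is analogous: the one-variable chain rule gives $(1/f_1)'(x;d) = -f_1'(x;d)/f_1(x)^2$ (using $f_1(x) \ne 0$ and continuity of $f_1$ along $d$), and since $-1/f_1(x)^2$ is a negative scalar the signed scalar-multiplication rule delivers $D(1/f_1) = -\frac{1}{f_1^2(x)} Df_1$.

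The main obstacle is the sign bookkeeping in the product rule. Because $f_1(x)$ and $f_2(x)$ may each be positive or negative, applying the scalar-multiplication rule to $f_1(x) Df_2$ and $f_2(x) Df_1$ forces a case analysis in which the sub- and superdifferentials are interchanged whenever the corresponding function value is negative; I must check that after reassembling the four resulting pieces the ``max'' part and the ``min'' part still recombine, via Minkowski additivity, into single convex compact sets so that the composite is genuinely quasidifferentiable. Verifying that Minkowski sums and signed scalar multiples of convex compact sets remain convex and compact (so that Definition 6.1.1 applies to the composites) is routine but is the step that guarantees the identities are meaningful rather than merely formal.
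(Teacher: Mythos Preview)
The paper states this theorem without proof; it is presented as a list of calculus rules immediately after Definition 6.1.2 and is followed directly by the maximum-function theorem, so there is no argument in the paper to compare against.

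Your outline is the standard route and is essentially correct: reduce each rule to an identity for the directional derivative $f'(x;d)$, then recognise the resulting expression as $\max_{u\in U} u^T d + \min_{v\in V} v^T d$ with $[U,V]$ the prescribed set pair, using Minkowski additivity of support functions and the sign-swap identities for $c<0$. The existence of the directional derivatives of $f_1+f_2$, $cf_1$, $f_1 f_2$ and $1/f_1$ follows from the first-order expansions $f_i(x+td)=f_i(x)+t f_i'(x;d)+o(t)$ along each ray, so the classical sum, product and reciprocal formulas for one-sided derivatives apply. The obstacle you flag---case analysis on the signs of $f_1(x)$ and $f_2(x)$ in the product rule---is real but harmless: in each of the four cases the ``max'' parts and the ``min'' parts recombine via Minkowski addition into the single pair dictated by Definition 6.1.2, and convex compactness is preserved under Minkowski sums and signed scalar multiples. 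One small point worth making explicit is that the quasidifferential is not unique (any pair $[U',V']$ with $U'-V'$ producing the same directional derivative works), so the displayed identities should be read as asserting that the right-hand sides are \emph{valid} quasidifferentials rather than the only ones; your support-function argument establishes exactly this.
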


As in \textsection4 and \textsection5, we present explicit expression of the quasidifferential of maximum function $f(x) = \max_{1\leq i\leq m} f_{i}(x)$.
\begin{theorem}
Assume that $f_{i}(x),i = 1,\dots,m$ are quasidifferentiable functions on $\mathbb{R}^n$, then the maximum function $f(x) = \max_{1\leq i\leq m} f_{i}(x)$ is also quasidifferentiable. Its quasidifferential $[\underline{\partial} f(x),  \overline{\partial} f(x)]$ can be expressed as follows:
\begin{equation}
    \begin{aligned}
    &\underline{\partial} f(x) = co\bigcup_{k\in I(x)}( \underline{\partial} f_{k}(x) - \sum_{i\in I(x)\setminus \{ k \} } \overline{\partial} f_{i}(x) ),\\
    &\overline{\partial} f(x) = \sum_{i\in I(x)} \overline{\partial} f_{i}(x).
    \end{aligned}
\end{equation}
where $I(x) = \left \{ i\in \{1,\dots,m\} \:\middle\vert\: f_{i}(x) = f(x) \right \}$.
\end{theorem}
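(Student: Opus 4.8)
The plan is to verify the quasidifferential representation directly from Definition 6.1.1: I will show that $f$ is directionally differentiable at $x$, compute $f'(x;d)$ explicitly, and then rearrange it into the canonical form $\max_{u\in\underline{\partial}f(x)}u^Td+\min_{v\in\overline{\partial}f(x)}v^Td$ with $\underline{\partial}f(x),\overline{\partial}f(x)$ as claimed. Once the directional derivative of the maximum is known, the whole argument reduces to support-function bookkeeping.

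First I would establish the elementary but crucial fact that
\[
f'(x;d)=\max_{i\in I(x)}f_i'(x;d),\qquad \forall d\in\mathbb{R}^n.
\]
For this, note that each $f_i$ is continuous (being directionally differentiable), so for an inactive index $i\notin I(x)$ we have $f_i(x)<f(x)$ and hence $f_i(x+td)<f(x+td)$ for all sufficiently small $t>0$; thus for small $t$ the maximum defining $f(x+td)$ is attained only among active indices. Expanding each active term as $f_i(x+td)=f(x)+tf_i'(x;d)+o(t)$ and taking the maximum yields the displayed formula after dividing by $t$ and letting $t\to0^+$.

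Next I would substitute the quasidifferential representation $f_i'(x;d)=\max_{u\in\underline{\partial}f_i(x)}u^Td+\min_{v\in\overline{\partial}f_i(x)}v^Td$ of each active function and isolate the common term. Writing $\min_{v\in\overline{\partial}f_i(x)}v^Td=\big(\min_{v\in\overline{\partial}f_i(x)}v^Td-\sum_{j\in I(x)}\min_{w\in\overline{\partial}f_j(x)}w^Td\big)+\sum_{j\in I(x)}\min_{w\in\overline{\partial}f_j(x)}w^Td$ and pulling the $i$-independent sum out of the maximum, I obtain
\[
f'(x;d)=\max_{i\in I(x)}\Big[\max_{u\in\underline{\partial}f_i(x)}u^Td-\sum_{j\in I(x)\setminus\{i\}}\min_{w\in\overline{\partial}f_j(x)}w^Td\Big]+\sum_{j\in I(x)}\min_{w\in\overline{\partial}f_j(x)}w^Td.
\]
The trailing sum equals $\min_{v\in\overline{\partial}f(x)}v^Td$ because the support functional is additive over Minkowski sums, which identifies $\overline{\partial}f(x)=\sum_{i\in I(x)}\overline{\partial}f_i(x)$. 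For the bracketed term I would rewrite $-\min_{w\in\overline{\partial}f_j(x)}w^Td=\max_{w\in-\overline{\partial}f_j(x)}w^Td$ and again use additivity to collapse the expression into $\max_{z\in\underline{\partial}f_i(x)-\sum_{j\in I(x)\setminus\{i\}}\overline{\partial}f_j(x)}z^Td$. Taking the maximum over $i\in I(x)$ turns this into the maximum of a linear form over a finite union of sets, and since a support function is unchanged under passing to the convex hull, this equals $\max_{z\in\underline{\partial}f(x)}z^Td$ with $\underline{\partial}f(x)=co\bigcup_{i\in I(x)}(\underline{\partial}f_i(x)-\sum_{j\in I(x)\setminus\{i\}}\overline{\partial}f_j(x))$, exactly as claimed.

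I expect the main obstacle to be the first step: justifying $f'(x;d)=\max_{i\in I(x)}f_i'(x;d)$ rigorously, in particular the uniform control of the $o(t)$ remainders across the finitely many active indices so that the maximum and the limit interchange. The remaining manipulations are a routine application of the two support-function identities, $\delta^{\ast}_{A+B}=\delta^{\ast}_A+\delta^{\ast}_B$ and $\delta^{\ast}_{co\,S}=\delta^{\ast}_S$, together with the verification that $\underline{\partial}f(x)$ and $\overline{\partial}f(x)$ are convex and compact — the former as the convex hull of a finite union of Minkowski sums of convex compact sets, the latter as a finite Minkowski sum of convex compact sets — which is what Definition 6.1.1 requires.
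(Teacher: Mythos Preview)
Your proof is correct and follows the standard Demyanov--Rubinov argument. The paper itself does not prove this theorem; it explicitly omits the proof and refers the reader to an external source, so there is no in-paper argument to compare against.

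One minor point worth tightening: the parenthetical ``each $f_i$ is continuous (being directionally differentiable)'' is not literally true---directional differentiability at a single point does not imply continuity at that point. What your argument actually requires, and what \emph{does} follow from directional differentiability in the direction $d$, is only that $t\mapsto f_i(x+td)$ is continuous at $t=0^+$; this suffices to show that for small $t>0$ the maximum $f(x+td)$ is realised on $I(x)$. With that phrasing adjusted, the rest---the add-and-subtract trick to peel off $\sum_{j\in I(x)}\min_{w\in\overline{\partial}f_j(x)}w^Td$, the support-function identities $\delta^{\ast}_{A+B}=\delta^{\ast}_A+\delta^{\ast}_B$ and $\delta^{\ast}_{co\,S}=\delta^{\ast}_S$, and the compactness/convexity checks---is entirely sound.
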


\begin{framed}
\noindent \emph{Remark:} This theorem illustrates explicit expressions of subdifferential and superdifferential of maximum function $f(x)=\max_{i\leq i\leq m} f_{i}(x)$. The proof of this theorem is a little complicate thus we omit it, the readers who are interested in it may refer to \cite{GaoY}.
\end{framed}

\begin{theorem}
Let $f(x)$ be a quasidifferentiable function on $\mathbb{R}^n$. If $f$ attains its minimum at $x^{\ast}$, then
\begin{align}
    -\overline{\partial} f(x^{\ast}) \subseteq \underline{\partial} f(x^{\ast}).
\end{align}
\end{theorem}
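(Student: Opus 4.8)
The plan is to exploit the nonnegativity of the directional derivative at a minimizer and then translate it into a set inclusion by means of the support-function comparison already recorded as Theorem 2.2.2. First I would note that since $f$ attains its minimum at $x^{\ast}$, for every direction $d\in\mathbb{R}^n$ we have $f(x^{\ast}+td)\geq f(x^{\ast})$ for all sufficiently small $t>0$, whence the directional derivative (which exists because $f$ is directionally differentiable, being quasidifferentiable in the sense of Definition 6.1.1) satisfies $f'(x^{\ast};d)\geq 0$. Substituting the quasidifferential representation of $f'(x^{\ast};d)$ then gives
\[
    \max_{u\in\underline{\partial} f(x^{\ast})} u^T d \;+\; \min_{v\in\overline{\partial} f(x^{\ast})} v^T d \;\geq\; 0, \qquad \forall d\in\mathbb{R}^n .
\]

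Next I would rewrite the second term so that a support function appears. Using $\min_{v\in\overline{\partial} f(x^{\ast})} v^T d = -\max_{v\in\overline{\partial} f(x^{\ast})} (-v)^T d$, the inequality above rearranges to
\[
    \max_{u\in\underline{\partial} f(x^{\ast})} u^T d \;\geq\; \max_{v\in\overline{\partial} f(x^{\ast})} (-v)^T d, \qquad \forall d\in\mathbb{R}^n .
\]
The left-hand side is exactly the support function $\delta^{\ast}_{\underline{\partial} f(x^{\ast})}(d)$, while the right-hand side is $\delta^{\ast}_{-\overline{\partial} f(x^{\ast})}(d)$, since maximizing $(-v)^T d$ over $v\in\overline{\partial} f(x^{\ast})$ is the same as maximizing $s^T d$ over $s\in -\overline{\partial} f(x^{\ast})$. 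Thus the displayed inequality says precisely that $\delta^{\ast}_{-\overline{\partial} f(x^{\ast})}(d)\leq\delta^{\ast}_{\underline{\partial} f(x^{\ast})}(d)$ for every $d$.

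Finally I would invoke Theorem 2.2.2. Both $\underline{\partial} f(x^{\ast})$ and $-\overline{\partial} f(x^{\ast})$ are convex and compact (the superdifferential is convex compact by Definition 6.1.1, and negation preserves both properties), so the pointwise domination of their support functions is equivalent to the containment $-\overline{\partial} f(x^{\ast})\subseteq\underline{\partial} f(x^{\ast})$, which is the assertion to be proved. I expect the only delicate point to be the sign bookkeeping when converting the minimum over $\overline{\partial} f(x^{\ast})$ into a maximum over $-\overline{\partial} f(x^{\ast})$; once that is done correctly, no separation argument has to be redone by hand, since Theorem 2.2.2 packages exactly the passage from support-function comparison to set inclusion.
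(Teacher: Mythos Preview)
Your proposal is correct and follows essentially the same approach as the paper's own proof: establish $f'(x^{\ast};d)\geq 0$ at the minimizer, rewrite the quasidifferential representation so that both sides become support functions of $-\overline{\partial} f(x^{\ast})$ and $\underline{\partial} f(x^{\ast})$, and then invoke Theorem~2.2.2 to pass from the support-function inequality to the set inclusion. The only cosmetic difference is that the paper argues the nonnegativity of $f'(x^{\ast};d)$ by contradiction, whereas you do it directly.
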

\begin{proof}
Since $f(x)$ is directionally differentiable and attains its minimum at  $x^{\ast}$, thus we have
\begin{align}
    f'(x^{\ast};d) \geq 0,\quad \forall d\in \mathbb{R}^n.
\end{align}
If not, then exists $d_{1}\in \mathbb{R}^n$ such that $f'(x^{\ast};d_1) < 0$. It follows from the definition of directional derivative that $f(x^{\ast}+td_{1}) < f(x^{\ast})$ for sufficiently small $t>0$, which contradicts the fact that $x^{\ast}$ is the minimum point of $f$.
Combining (31) and the definition of quasidifferential, we derive
\begin{align*}
    0 \leq \max_{u\in \underline{\partial} f(x^{\ast})} u^T d + \min_{v\in \overline{\partial} f(x^{\ast})} v^T d,\quad \forall d\in \mathbb{R}^n,
\end{align*}
that is
\begin{align*}
    \max_{v\in -\overline{\partial} f(x^{\ast})} v^T d \leq \max_{u\in \underline{\partial} f(x^{\ast})} u^T d,\quad \forall d\in \mathbb{R}^n,
\end{align*}
which implies the fact that
\begin{align*}
    \delta_{-\overline{\partial} f(x^{\ast})}^{\ast}(x) \leq \delta_{\underline{\partial} f(x^{\ast})}^{\ast}(x).
\end{align*}
According Theorem 2.2.2, we obtain $-\overline{\partial} f(x^{\ast}) \subseteq \underline{\partial} f(x^{\ast})$, as required.
\end{proof}

\subsection{Larangian Methods for Quasidifferentiable Programming}
In this section, we will discuss and present generalized Fritz John condition in quasidifferentiable mathematical programming problem \textrm{(P2)}. For the convenience of expression, we write $g_{i}(x)\,(i=1,\dots,m)$ to $f_{i}(x)\,(i=1,\dots,m)$ and write $f(x)$ to $f_{0}(x)$. That is
\begin{equation*}
\begin{aligned}
\min \quad & f_{0}(x)\\
\textrm{s.t.} \quad & f_{i}(x) \leq 0, \quad i = 1,\dots,m
\end{aligned}
\tag{P2}
\end{equation*}

\begin{theorem}
Let $f_{i}(x)(i = 0,\dots,m)$ are all quasidifferentiable functions on $\mathbb{R}^n$. Assume that $f_{0}(x)$ attains its minimum at $x^{\ast}$ of problem $\mathrm{(P2)}$, then we have
\begin{align}
    -\sum_{i\in \{0\}\cup I(x^{\ast})}\overline{\partial} f_{i}(x^{\ast}) \subseteq co\,\bigcup_{i\in \{0\} \cup I(x^{\ast})} \left \{ \underline{\partial}f_{i}(x^{\ast}) - \sum_{j\in \{0\} \cup I(x^{\ast})\setminus \{i\} } \overline{\partial} f_{j}(x^{\ast})\right \},
\end{align}
where $I(x^{\ast}) = \left \{ i\in \{1,\dots,m\} \:\vert\: f_{i}(x^{\ast}) = 0 \right \}$.
\end{theorem}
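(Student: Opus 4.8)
The plan is to mirror the reductions used for the convex case (Theorem 4.2.2) and the locally Lipschitz case (Theorem 5.3.1): I would collapse the constrained problem into a single unconstrained quasidifferentiable function by a max-type penalization, and then invoke the two structural facts established in \textsection6.1, namely the quasidifferential of a maximum (Theorem 6.1.2) and the minimum condition $-\overline{\partial}f(x^{\ast})\subseteq\underline{\partial}f(x^{\ast})$ for quasidifferentiable functions (Theorem 6.1.3). Concretely, I would set
\[
    F(x) = \max\{f_{0}(x)-f_{0}(x^{\ast}),\, f_{1}(x),\dots,f_{m}(x)\}.
\]
Subtracting the constant $f_{0}(x^{\ast})$ does not change the directional derivative of $f_{0}$, hence leaves both $\underline{\partial}f_{0}$ and $\overline{\partial}f_{0}$ untouched; by Theorem 6.1.1 each entry of the maximum is quasidifferentiable, so by Theorem 6.1.2 the function $F$ is quasidifferentiable as well.

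Next I would check that $F$ attains its minimum at $x^{\ast}$. Since $f_{0}(x^{\ast})-f_{0}(x^{\ast})=0$, $f_{i}(x^{\ast})=0$ for $i\in I(x^{\ast})$, and $f_{i}(x^{\ast})<0$ otherwise, we have $F(x^{\ast})=0$. For $x$ near $x^{\ast}$ that is feasible, local optimality gives $f_{0}(x)\geq f_{0}(x^{\ast})$ so $F(x)\geq 0$; for $x$ infeasible some $f_{i}(x)>0$ so $F(x)>0$. Hence $F(x)\geq F(x^{\ast})$ in a neighborhood of $x^{\ast}$, so $F$ has a local minimum there and Theorem 6.1.3 applies, giving
\[
    -\overline{\partial}F(x^{\ast})\subseteq\underline{\partial}F(x^{\ast}).
\]

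The decisive step, and the one I expect to demand the most care, is to evaluate both sides of this inclusion through the explicit max-rule of Theorem 6.1.2. The index set active for $F$ at $x^{\ast}$ (those components attaining the common value $0$) is exactly $\{0\}\cup I(x^{\ast})$, and the strictly inactive constraints $i\notin I(x^{\ast})$ drop out. Using $\underline{\partial}(f_{0}-f_{0}(x^{\ast}))=\underline{\partial}f_{0}(x^{\ast})$ and $\overline{\partial}(f_{0}-f_{0}(x^{\ast}))=\overline{\partial}f_{0}(x^{\ast})$, Theorem 6.1.2 then gives
\[
    \overline{\partial}F(x^{\ast})=\sum_{i\in\{0\}\cup I(x^{\ast})}\overline{\partial}f_{i}(x^{\ast})
\]
and
\[
    \underline{\partial}F(x^{\ast})=co\,\bigcup_{i\in\{0\}\cup I(x^{\ast})}\left(\underline{\partial}f_{i}(x^{\ast})-\sum_{j\in(\{0\}\cup I(x^{\ast}))\setminus\{i\}}\overline{\partial}f_{j}(x^{\ast})\right).
\]
Substituting these two identities into $-\overline{\partial}F(x^{\ast})\subseteq\underline{\partial}F(x^{\ast})$ reproduces verbatim the asserted inclusion. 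The genuine obstacle is therefore not analytic but combinatorial: one must carefully match the dummy indices of the max-formula to the right-hand side of the statement and confirm that the constant shift and the strictly inactive constraints contribute nothing, after which the result follows by direct citation of Theorems 6.1.2 and 6.1.3.
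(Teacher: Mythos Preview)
Your proposal is correct and follows exactly the paper's own argument: define $F(x)=\max\{f_{0}(x)-f_{0}(x^{\ast}),f_{1}(x),\dots,f_{m}(x)\}$, observe that $F$ is quasidifferentiable with a local minimum at $x^{\ast}$, apply Theorem 6.1.3 to obtain $-\overline{\partial}F(x^{\ast})\subseteq\underline{\partial}F(x^{\ast})$, and read off both sides via Theorem 6.1.2. If anything, you supply more detail than the paper on why the active index set is $\{0\}\cup I(x^{\ast})$ and why the constant shift is harmless.
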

\begin{proof}
Let $F(x) = \max \{ f_{0}(x)-f_{0}(x^{\ast}), f_{1}(x), \dots, f_{m}(x) \}$. Note that $F(x)$ is also quasidifferentiable and $F(x^{\ast}) = 0$. Besides, we have $F(x)\geq F(x^{\ast})$ for $x$  sufficiently close to $x^{\ast}$, that is, $F$ attains its minimum at $x^{\ast}$. From Theorem 6.1.3, we derive
\begin{align}
    -\overline{\partial} F(x^{\ast}) \subseteq \underline{\partial} F(x^{\ast}).
\end{align}
Applying Theorem 6.1.2, $-\overline{\partial} F(x^{\ast})$ equals the left side and $\underline{\partial} F(x^{\ast})$ equals the right side. This completes the proof.
\end{proof}
\begin{framed}
    \noindent \emph{Remark:} Compared with the optimality conditions of  convex programming and Lipschitz programming presented in \textsection4 and \textsection5, we can see that of quasidifferentiable programming has something different. It seems that the result above has nothing to do with Lagrange multipliers. The good news is that we can change it to a familiar form. In the following two theorems, we will present the main results of Luderer's paper \cite{luderer1991directional}.
\end{framed}

Before our discussion of the quasidifferentiable case of problem $\mathrm{(P2)}$, we firstly deal with the subdifferentiable case, that is, the objective function and constraints functions are only subdifferentiable (see the Defintion 6.1.1). The proof of next theorem concerns a lot of literature and requires a certain mathematical maturity thus we present and use it directly, the reader may refer to \cite{luderer1991directional} for further reading.
\newline
\begin{theorem}
\textbf{(Fritz John Condition)} Let $f_{i}(x)(i = 0,\dots,m)$ are all subdifferentiable functions on $\mathbb{R}^n$. Assume that $f_{0}(x)$ attains its minimum at $x^{\ast}$ of problem $\mathrm{(P2)}$. Then there exist scalars $\overline{\lambda_{i}} \geq 0,\:(i = 0,\dots,m)$ such that
\begin{equation}
    \begin{aligned}
    & 0\in \sum_{i=0}^m \overline{\lambda_{i}}\:\underline{\partial} f_{i}(x^{\ast}),\\
    & \overline{\lambda_{i}}f_{i}(x^{\ast}) = 0,\quad i = 1,\dots,m,
    \end{aligned}
\end{equation}
If, in addition, there exists a vector $\hat{x}$ with
\begin{equation*}
    f'_{i}(x^{\ast}; \hat{x}-x^{\ast})<0,\quad \forall i\in I(\overline{x}),
\end{equation*}
(generalized Slater condition), then we have the fact that $\overline{\lambda_{0}}\neq 0$.
\end{theorem}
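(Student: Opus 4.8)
The plan is to derive this subdifferentiable Fritz John condition as a direct specialization of Theorem 6.2.1, and then to add a short self-contained argument for the refinement $\overline{\lambda_0} \neq 0$ under the generalized Slater condition. The crucial observation is that, by Definition 6.1.1, a subdifferentiable function has trivial superdifferential, i.e. $\overline{\partial} f_i(x^{\ast}) = \{0\}$ for every $i$. Substituting this into the inclusion of Theorem 6.2.1 collapses both sides: the left-hand Minkowski sum $-\sum_{i \in \{0\}\cup I(x^{\ast})} \overline{\partial} f_i(x^{\ast})$ reduces to $\{0\}$, while each inner difference $\underline{\partial} f_i(x^{\ast}) - \sum_{j \neq i} \overline{\partial} f_j(x^{\ast})$ reduces to $\underline{\partial} f_i(x^{\ast})$. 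Hence Theorem 6.2.1 yields the clean statement
\[ 0 \in co\,\bigcup_{i \in \{0\}\cup I(x^{\ast})} \underline{\partial} f_i(x^{\ast}). \]

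Next I would extract the multipliers. Since each $\underline{\partial} f_i(x^{\ast})$ is a nonempty convex compact set, a point lies in the convex hull of their union precisely when it is a convex combination of one representative chosen from each set; this is the standard description of the convex hull of a finite union of convex sets, obtained by grouping a Carath\'eodory representation according to its originating set and merging each group by convexity. Applying this to $0$ produces scalars $\overline{\lambda_i} \geq 0$ with $\sum_{i \in \{0\}\cup I(x^{\ast})} \overline{\lambda_i} = 1$ and subgradients $\xi_i \in \underline{\partial} f_i(x^{\ast})$ satisfying $\sum_{i \in \{0\}\cup I(x^{\ast})} \overline{\lambda_i}\,\xi_i = 0$, that is $0 \in \sum_{i \in \{0\}\cup I(x^{\ast})} \overline{\lambda_i}\,\underline{\partial} f_i(x^{\ast})$. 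Setting $\overline{\lambda_i} = 0$ for the inactive indices $i \notin I(x^{\ast})$ extends the relation to a full sum over $i = 0,\dots,m$ and simultaneously enforces complementary slackness $\overline{\lambda_i} f_i(x^{\ast}) = 0$, because $f_i(x^{\ast}) = 0$ on the active set. The normalization $\sum \overline{\lambda_i} = 1$ is precisely what keeps the conclusion nontrivial, ruling out the vacuous all-zero multiplier.

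For the refinement, I would suppose toward a contradiction that $\overline{\lambda_0} = 0$. Then $\sum_{i \in I(x^{\ast})} \overline{\lambda_i} = 1$, so some $\overline{\lambda_i}$ with $i \in I(x^{\ast})$ is strictly positive, while $\sum_{i \in I(x^{\ast})} \overline{\lambda_i}\,\xi_i = 0$. Pairing this identity with the direction $d = \hat{x} - x^{\ast}$ and using that for a subdifferentiable function $f'_i(x^{\ast}; d) = \max_{u \in \underline{\partial} f_i(x^{\ast})} u^T d \geq \xi_i^T d$, I would estimate
\[ 0 = \left( \sum_{i \in I(x^{\ast})} \overline{\lambda_i}\,\xi_i \right)^{T} d = \sum_{i \in I(x^{\ast})} \overline{\lambda_i}\,(\xi_i^T d) \leq \sum_{i \in I(x^{\ast})} \overline{\lambda_i}\, f'_i(x^{\ast}; d) < 0, \]
where the final strict inequality combines the generalized Slater hypothesis $f'_i(x^{\ast}; \hat{x} - x^{\ast}) < 0$ with the presence of a strictly positive multiplier. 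This contradiction forces $\overline{\lambda_0} \neq 0$.

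I expect the genuine obstacle to lie not in the steps above, which are essentially bookkeeping once Theorem 6.2.1 is in hand, but in the machinery underlying Theorem 6.2.1 itself, namely the explicit quasidifferential calculus of the maximum function (Theorem 6.1.2) and the optimality inclusion $-\overline{\partial} F(x^{\ast}) \subseteq \underline{\partial} F(x^{\ast})$ of Theorem 6.1.3. Taking those as granted, the only step demanding a real idea is the directional-derivative pairing used for the Slater refinement; the subtle point there is to exploit that subdifferentiability makes $f'_i(x^{\ast};\cdot)$ the support function of $\underline{\partial} f_i(x^{\ast})$, which is exactly what converts the algebraic multiplier identity into the decisive sign contradiction.
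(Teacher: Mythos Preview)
Your argument is correct. The specialization of Theorem~6.2.1 to the subdifferentiable case is clean: with $\overline{\partial} f_i(x^{\ast})=\{0\}$ the inclusion collapses exactly as you describe, the convex-hull extraction of multipliers is the standard representation of $co\,\bigcup_i A_i$ for finitely many nonempty convex sets $A_i$, and the Slater refinement via the pairing with $d=\hat{x}-x^{\ast}$ is the right idea (the key being that subdifferentiability makes $f_i'(x^{\ast};\cdot)$ the support function of $\underline{\partial} f_i(x^{\ast})$, so $\xi_i^T d \le f_i'(x^{\ast};d)$).

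The comparison with the paper is worth recording. The paper does \emph{not} prove this theorem at all: it states explicitly that ``the proof of next theorem concerns a lot of literature and requires a certain mathematical maturity thus we present and use it directly,'' deferring to Luderer's original article. In the paper's logical flow, Theorem~6.2.2 is treated as an imported black box which is then used to prove the general quasidifferentiable Fritz John condition (Theorem~6.2.3). Your route is genuinely different and, within the paper's framework, more economical: you obtain Theorem~6.2.2 as an immediate corollary of Theorem~6.2.1, which the paper \emph{does} prove via the maximum-function construction and Theorems~6.1.2--6.1.3. This removes the external dependence for the subdifferentiable case and shows that the Fritz John multipliers are already encoded in the inclusion of Theorem~6.2.1. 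The trade-off is that the paper's intended architecture (prove 6.2.2 independently, then bootstrap to 6.2.3) suggests that Luderer's original proof of 6.2.2 does not pass through the quasidifferential calculus of the max function; your derivation does rely on Theorem~6.1.2, whose proof the paper also omits. So you have shifted, rather than eliminated, the imported ingredient---but the shift is a good one, since Theorem~6.1.2 is a calculus rule of independent interest, whereas your approach makes the Fritz John condition itself transparent.
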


\begin{framed}
\noindent \emph{Remark:} Note that the necessary condition above is in accordance with the well-known Lagrange multiplier principle. In the general case, when we deal with a quasidifferentiable problem $\mathrm{(P2)}$, the direct Lagrange principle fails. Instead, we are able to state a so-called weakened Lagrange multiplier principle. In turn, this leads to the following result.
\end{framed}

\begin{theorem}
\textbf{(Fritz John Condition)} Let $f_{i}(x)(i=0,\dots,m)$ are all quasidifferentiable functions on $\mathbb{R}^n$ with the quasidifferentials $Df_{i}(x^{\ast}) = [\underline{\partial} f_{i}(x^{\ast}),\overline{\partial} f_{i}(x^{\ast})],(i=0,\dots,m)$. Assume that $f_{0}(x)$ attains its minimum at $x^{\ast}$ of problem $\mathrm{(P2)}$. Then, for any $w_{i}\in \overline{\partial} f_{i}(x^{\ast})$, $i\in {0}\cup I(x^{\ast})$ there exist scalars $\overline{\lambda_{i}} \geq 0\:(i = 0,\dots,m)$ not all zero, such that
\begin{equation}
    \begin{aligned}
    & 0\in \sum_{i\in \{0\}\cup I(x^{\ast})} \overline{\lambda_{i}}\:(\underline{\partial}  f_{i}(x^{\ast})+w_{i}),\\
    & \overline{\lambda_{i}}f_{i}(x^{\ast}) = 0,\quad i = 1,\dots,m,
    \end{aligned}
\end{equation}
If, in addition, the regularity condition, that is, there exists $\hat{r}$ such that 
\begin{equation}
    \max_{z\in \underline{\partial} f_{i}(x^{\ast})} \langle z,\hat{r} \rangle + \max_{z\in \overline{\partial} f_{i}(x^{\ast})} \langle z,\hat{r} \rangle < 0,\quad \forall i \in I(x^{\ast})
    \tag{RC}
\end{equation}
is satisfied, then actually $\overline{\lambda_{0}}\neq 0$ and this theorem becomes extended Karush-Kuhn-Tucker theorem.
\end{theorem}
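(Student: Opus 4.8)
The plan is to reduce the quasidifferentiable problem to a sublinear (subdifferentiable) one by freezing the superdifferential part. Fix $w_{i}\in\overline{\partial} f_{i}(x^{\ast})$ for each $i\in\{0\}\cup I(x^{\ast})$ as in the statement. Since $w_{i}\in\overline{\partial} f_{i}(x^{\ast})$, for every direction $d$ we have $\min_{v\in\overline{\partial} f_{i}(x^{\ast})} v^{T} d \le w_{i}^{T} d$, so the directional derivative admits the sublinear majorant
\begin{equation*}
    f_{i}'(x^{\ast};d) = \max_{u\in\underline{\partial} f_{i}(x^{\ast})} u^{T} d + \min_{v\in\overline{\partial} f_{i}(x^{\ast})} v^{T} d \;\le\; \max_{u\in\underline{\partial} f_{i}(x^{\ast})+w_{i}} u^{T} d = \delta_{\underline{\partial} f_{i}(x^{\ast})+w_{i}}^{\ast}(d).
\end{equation*}
This replaces each $f_{i}$ by the support function of the convex compact set $S_{i}:=\underline{\partial} f_{i}(x^{\ast})+w_{i}$, which is exactly the kind of object handled in \textsection 2.2.

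Next I would exploit optimality through the aggregated function $F(x)=\max\{f_{0}(x)-f_{0}(x^{\ast}),f_{1}(x),\dots,f_{m}(x)\}$, which attains its minimum at $x^{\ast}$ with $F(x^{\ast})=0$ and active index set $\{0\}\cup I(x^{\ast})$, exactly as in the proof of Theorem 6.2.1. As in Theorem 6.1.3 this forces $F'(x^{\ast};d)\ge 0$ for all $d$, and since the directional derivative of a finite maximum is the maximum of the directional derivatives of the active components, $\max_{i\in\{0\}\cup I(x^{\ast})} f_{i}'(x^{\ast};d)\ge 0$ for all $d$. Combining this with the majorant above and the identity $\max_{i}\delta_{S_{i}}^{\ast}=\delta_{co\bigcup_{i} S_{i}}^{\ast}$ gives $\delta_{K}^{\ast}(d)\ge 0=\delta_{\{0\}}^{\ast}(d)$ for all $d$, where $K:=co\bigcup_{i\in\{0\}\cup I(x^{\ast})}(\underline{\partial} f_{i}(x^{\ast})+w_{i})$. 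By the support-function comparison Theorem 2.2.2 this yields $0\in K$. Writing $0$ as a convex combination of points of the convex sets $S_{i}$ produces scalars $\overline{\lambda_{i}}\ge 0$ with $\sum\overline{\lambda_{i}}=1$ (hence not all zero) such that $0\in\sum_{i\in\{0\}\cup I(x^{\ast})}\overline{\lambda_{i}}(\underline{\partial} f_{i}(x^{\ast})+w_{i})$; setting $\overline{\lambda_{i}}=0$ for the inactive indices delivers the complementary slackness $\overline{\lambda_{i}}f_{i}(x^{\ast})=0$.

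For the final assertion under the regularity condition (RC), I would argue by contradiction: suppose $\overline{\lambda_{0}}=0$. Then $\sum_{i\in I(x^{\ast})}\overline{\lambda_{i}}>0$ and there are $u_{i}\in\underline{\partial} f_{i}(x^{\ast})$ with $\sum_{i\in I(x^{\ast})}\overline{\lambda_{i}}(u_{i}+w_{i})=0$. Pairing with the vector $\hat{r}$ furnished by (RC) and using $\langle u_{i},\hat{r}\rangle\le\max_{z\in\underline{\partial} f_{i}(x^{\ast})}\langle z,\hat{r}\rangle$ together with $\langle w_{i},\hat{r}\rangle\le\max_{z\in\overline{\partial} f_{i}(x^{\ast})}\langle z,\hat{r}\rangle$ gives $0=\sum_{i\in I(x^{\ast})}\overline{\lambda_{i}}(\langle u_{i},\hat{r}\rangle+\langle w_{i},\hat{r}\rangle)<0$, a contradiction; hence $\overline{\lambda_{0}}\neq 0$ and the statement strengthens to an extended Karush-Kuhn-Tucker condition.

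The main obstacle I anticipate is the passage from the pointwise inequality on directional derivatives to the membership $0\in K$: it requires recognizing the finite maximum of support functions as the support function of the convex hull of the union of the underlying sets (so that $K$ is convex and compact) and then invoking Theorem 2.2.2 in the degenerate form $\{0\}\subseteq K$. A secondary technical point is justifying $F'(x^{\ast};d)=\max_{i} f_{i}'(x^{\ast};d)$ over the active set and ensuring that the prescribed multipliers $w_{i}$ are carried consistently through the convex-hull decomposition, so that the final inclusion is stated with precisely those $w_{i}$ rather than with unrelated elements of the superdifferentials.
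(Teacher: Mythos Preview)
Your argument is correct and shares the same opening move as the paper: freeze $w_{i}\in\overline{\partial} f_{i}(x^{\ast})$ and replace each directional derivative by the sublinear majorant $\delta_{\underline{\partial} f_{i}(x^{\ast})+w_{i}}^{\ast}$. The execution, however, is genuinely different. The paper builds auxiliary \emph{subdifferentiable} functions $f_{i,w_{i}}(x)=f_{i}(x^{\ast})+\max\{\langle z,x-x^{\ast}\rangle : z\in\underline{\partial} f_{i}(x^{\ast})+w_{i}\}$, observes that no direction makes all active $f'_{i,w_{i}}(x^{\ast};\cdot)$ negative (else $x^{\ast}$ would not be optimal for $\mathrm{(P2)}$), and then invokes the black-box subdifferentiable Fritz John Theorem~6.2.2 to produce the multipliers; the regularity part is handled by checking that (RC) forces the generalized Slater condition of Theorem~6.2.2 for the auxiliary problem. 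You instead stay at the level of support functions: from $\max_{i}\delta_{S_{i}}^{\ast}=\delta_{co\bigcup_{i}S_{i}}^{\ast}$ and Theorem~2.2.2 you read off $0\in K$ directly and then unpack a convex combination, and you treat (RC) by a one-line contradiction. Your route is more elementary and fully self-contained within the tools of \textsection 2.2, effectively proving inline the special case of Theorem~6.2.2 that is needed; the paper's route is more modular but leans on a result it only states. The point you flag yourself---that $F'(x^{\ast};d)=\max_{i\in\{0\}\cup I(x^{\ast})}f_{i}'(x^{\ast};d)$---is standard for finite maxima of directionally differentiable functions; alternatively, you only need the inequality $\max_{i}f_{i}'(x^{\ast};d)\ge 0$, which follows directly from the usual ``no common descent direction'' argument without going through $F$.
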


\begin{proof}
Fix $w_{i}\in \overline{\partial} f_{i}(x^{\ast})$, $i\in {0}\cup I(x^{\ast})$ and let $f_{0,w_{0}}(x)$ (analogously $f_{i,{w_{i}}}(x)$) be a function associated with $x^{\ast}$, defined via the relation:
\begin{equation*}
    f_{0,{w_{0}}}(x) = f_{0}(x^{\ast}) + \max \{\langle z,x-x^{\ast} \rangle \vert z\in \underline{\partial} f_{0}(x^{\ast}) + w_{0}\}
\end{equation*}
and having the properties $f_{0,w_{0}}(x^{\ast})=f_{0}(x^{\ast})$,$Df_{0,w_{0}}(x^{\ast})=[\underline{\partial}f_{0}(x^{\ast})+w_{0},{0}]$ which implies that $f_{0,w_{0}}(x)$ is subdifferential function, and $f'_{0}(x^{\ast};r) = \min \{ f'_{0,w_{0}}(x^{\ast};r) \vert w_{0}\in \overline{\partial} f_{0}(x^{\ast}) \}$. It's easy to see that, at the point $x^{\ast}$, there cannot exist a direction $\overline{r}$ satisfying simultaneously the conditions $f'_{0,w_{0}}(x^{\ast};\overline{r})<0$ and $f'_{i,w_{i}}(x^{\ast};\overline{r})<0$ for $i\in I(x^{\ast})$. In fact, if we could indicate such a direction, then, by what was said above, $f'_{0}(x^{\ast};\overline{r})<0$, $f'_{i}(x^{\ast};\overline{r})<0$ for $i\in I(x^{\ast})$. This, however, contradicts the assumption that $x^{\ast}$ provides a local minimum in problem $\mathrm{(P2)}$. Thus, considering the subdifferentiable problem as follow:
\begin{equation*}
    \begin{aligned}
    \min \quad & f_{0,w_{0}}(x)\\
    \textrm{s.t.} \quad & f_{i,w_{i}}(x) \leq 0, \quad i\in I(x^{\ast})
    \end{aligned}
\end{equation*}
Since the objective function and constraint functions are subdifferentiable functions and $x^{\ast}$ is the minimum solution, we conclude the existence of multipliers $\overline{\lambda_{i}}\:(i\in \{0\} \cup I(x^{\ast}))$ satisfying Thm 6.2.2, that is (35). Finally, taking the fixed elements $w_{i}\in \overline{\partial} f_{i}(x^{\ast})\:(i\in I(x^{\ast}))$, condition (RC) guarantees at $x^{\ast}$ the validity of the generalized Slater condition for every function $f_{i,w_{i}}$, which in turn ensures $\overline{\lambda_{0}}\neq 0$.
\end{proof}

\section{Set-Valued Optimization}
Throughout this section we will use the following standard assumption.
\newline
\newline
\noindent \textbf{Assumption:} \emph{Let $\left(X,\parallel \cdot \parallel_{X} \right)$ be a real normed space, let $\left(Y,\parallel \cdot \parallel_{Y} \right)$ and $\left(Z,\parallel \cdot \parallel_{Z} \right)$ be real normed spaces and partially ordered by convex pointed cones $C_Y \subseteq Y$ and $C_Z \subseteq Z$ respectively, let $\hat{S}$ be a nonempty subset of $X$, and let $F:\hat{S}\rightrightarrows Y$ and $G:\hat{S}\rightrightarrows Z$ be set-valued maps.}
\par Under this assumption we consider the following constrained set-valued optimization problem:
\begin{equation*}
\begin{aligned}
\min \quad & F(x)\\
\textrm{s.t.} \quad & G(x)\cap (-C_{Z}) \neq \emptyset 
\end{aligned}
\tag{P3}
\end{equation*}
For simplicity let $S = \left \{ x\in \hat{S} \:\vert\: G(x)\cap (-C_{Z})\neq \emptyset \right\}$ denote the feasible set of this problem, which is assumed to be nonempty.

\subsection{Some Preliminaries in Set-Valued Optimization}
\begin{definition}
Let the problem $\mathrm{(P3)}$ be given. Let $F(S) = \cup_{x\in S} F(x)$ denote the image set of $F$.
\begin{itemize}
    \item A pair $(x^{\ast},y^{\ast})$ with $x^{\ast}\in S$ and $y^{\ast}\in F(x^{\ast})$ is called a minimizer of the problem $\mathrm{(P3)}$, if $\overline{y}$ is a minimal element of the set $F(S)$, i.e.,
    \begin{equation*}
        (\{y^{\ast}\} - C_{Y})\cap F(S) \subseteq  \{y^{\ast}\} + C_{Y}.
    \end{equation*}
    
    \item A pair $(x^{\ast},y^{\ast})$ with $x^{\ast}\in S$ and $y^{\ast}\in F(x^{\ast})$ is called a strong  minimizer of the problem $\mathrm{(P3)}$, if $y^{\ast}$ is a strongly minimal element of the set $F(S)$, i.e.,
    \begin{equation*}
        F(S) \subseteq \{y^{\ast}\} + C_{Y}.
    \end{equation*}
    
    \item A pair $(x^{\ast},y^{\ast})$ with $x^{\ast}\in S$ and $y^{\ast}\in F(x^{\ast})$ is called a weak  minimizer of the problem $\mathrm{(P3)}$, if $y^{\ast}$ is a weakly minimal element of the set $F(S)$, i.e.,
    \begin{equation*}
        (\{ y^{\ast}\} - int(C_{Y}))\cap F(S) = \emptyset.
    \end{equation*}
\end{itemize}
\end{definition}

\begin{theorem}
Let the assumption mentioned above holds. Then every strong minimizer of the problem $\mathrm{(P3)}$ is also a minimizer of the problem $\mathrm{(P3)}$ and every minimizer of the problem $\mathrm{(P3)}$ is also a weak minimizer of the problem $\mathrm{(P3)}$.
\end{theorem}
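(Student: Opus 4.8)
The plan is to treat the two implications separately, since they have quite different characters.

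For the first implication (strong minimizer $\Rightarrow$ minimizer), I would observe that it is purely a matter of set inclusion and needs no structure on $C_{Y}$ beyond its being a subset of $Y$. If $(x^{\ast},y^{\ast})$ is a strong minimizer, then $F(S) \subseteq \{y^{\ast}\} + C_{Y}$ by definition. Since intersecting with $\{y^{\ast}\} - C_{Y}$ can only shrink $F(S)$, we get $(\{y^{\ast}\} - C_{Y}) \cap F(S) \subseteq F(S) \subseteq \{y^{\ast}\} + C_{Y}$, which is exactly the defining inclusion of a minimizer. So this direction is immediate and uses neither convexity nor pointedness.

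For the second implication (minimizer $\Rightarrow$ weak minimizer), I would argue by contradiction, and here pointedness of $C_{Y}$ becomes essential. Suppose $(x^{\ast},y^{\ast})$ is a minimizer but not a weak minimizer, so there is some $y \in (\{y^{\ast}\} - int(C_{Y})) \cap F(S)$; set $c = y^{\ast} - y \in int(C_{Y})$. Because $int(C_{Y}) \subseteq C_{Y}$, we also have $y \in \{y^{\ast}\} - C_{Y}$, hence $y \in (\{y^{\ast}\} - C_{Y}) \cap F(S)$. The minimizer hypothesis then forces $y \in \{y^{\ast}\} + C_{Y}$, i.e. $-c = y - y^{\ast} \in C_{Y}$. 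Combining this with $c \in int(C_{Y}) \subseteq C_{Y}$ gives $c \in C_{Y} \cap (-C_{Y})$.

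The final step, and the only genuinely nonvacuous point of the proof, is to extract a contradiction from $c \in int(C_{Y}) \cap (-C_{Y})$. Pointedness gives $C_{Y} \cap (-C_{Y}) = \{0\}$, so $c = 0$, and therefore $0 \in int(C_{Y})$. I would then rule this out: if $0$ were an interior point of the cone $C_{Y}$, a small ball about $0$ would lie in $C_{Y}$, and the cone property would scale it up to force $C_{Y} = Y$, contradicting pointedness whenever $Y \neq \{0\}$. Equivalently, one may invoke the line-segment principle for convex sets: with $c \in int(C_{Y})$ and $-c \in C_{Y}$, the midpoint $0$ lies in $int(C_{Y})$, yielding the same conclusion. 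This contradiction shows no such $y$ exists, i.e. $(\{y^{\ast}\} - int(C_{Y})) \cap F(S) = \emptyset$, so $(x^{\ast},y^{\ast})$ is a weak minimizer. I expect the only subtlety a careful reader will want spelled out is precisely this exclusion of $0$ from $int(C_{Y})$; every other step is formal manipulation of the defining inclusions.
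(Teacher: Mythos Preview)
Your proof is correct. Note, however, that the paper does not actually supply a proof of this statement: it explicitly omits it and refers the reader to Jahn's book, so there is no in-paper argument to compare against. Your treatment is the standard one: the first implication is a trivial inclusion, and the second uses pointedness of $C_{Y}$ to force $c=0$ and then rules out $0\in int(C_{Y})$ by observing that this would make $C_{Y}=Y$, violating pointedness for $Y\neq\{0\}$. This is exactly the kind of argument one finds in Jahn, and nothing further is needed.
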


\begin{framed}
\noindent \emph{Remark:} This theorem describes the relation between different optimality notions. We omit the proofs and the readers can find them in Jahannes Jahn's book \cite{jahn2010vector}.
\end{framed}

\subsection{Contingent Epiderivatives of Set-Valued Mappings}
\begin{definition}
Let the assumption mentioned in the beginning holds. In addition, let $\hat{S}$ be convex. The set-valued map $F:\hat{S}\rightrightarrows Y$ is called $C_{Y}$-convex, if for all $x_{1},x_{2}\in \hat{S}$ and $\lambda\in [0,1]$
\begin{equation*}
    \lambda F(x_1) + (1-\lambda) F(x_2) \subseteq F(\lambda x_1 + (1-\lambda) x_2) + C_{Y}.
\end{equation*}
\end{definition}

Theorem 2.2.3 describes the equivalence of convexity of a function and convexity of its epigraph. This result also holds in set-valued analysis. $C$-convexity of a set-valued map can also be characterized by the convexity of its epigraph. We present the definition of epigraph of a set-valued map and then proof this characterization. 

\begin{definition}
Let the assumption mentioned above be satisfied. In addition, let $\hat{S}$ be convex. The set
\begin{equation*}
    epi(F) = \{(x,y)\in X\times Y\:\vert\:x\in \hat{S}, y\in F(x)+C_{Y}\}
\end{equation*}
is called the epigraph of $F$.
\end{definition}

\begin{theorem}
Let the assumption mentioned above be satisfied. In addition, let $\hat{S}$ be convex. Then $F$ is $C_{Y}$-convex if and only if $epi(F)$ is a convex set.
\end{theorem}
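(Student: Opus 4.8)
The plan is to prove both implications directly from the definitions, using only the convex-cone properties of $C_{Y}$ (namely $0\in C_{Y}$, $C_{Y}+C_{Y}\subseteq C_{Y}$, and $\alpha C_{Y}\subseteq C_{Y}$ for $\alpha\geq 0$, all of which follow since $C_{Y}$ is a convex cone) together with the convexity of $\hat{S}$. No appeal to deeper machinery is needed; the content of the theorem is that $C_{Y}$-convexity is exactly the condition making the epigraph close up under convex combinations.

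First I would handle the forward direction. Assuming $F$ is $C_{Y}$-convex, I would take two points $(x_{1},y_{1}),(x_{2},y_{2})\in epi(F)$ and $\lambda\in[0,1]$, and write $y_{1}=v_{1}+c_{1}$, $y_{2}=v_{2}+c_{2}$ with $v_{k}\in F(x_{k})$ and $c_{k}\in C_{Y}$. Forming the convex combination, I would split it as $\lambda v_{1}+(1-\lambda)v_{2}$ plus $\lambda c_{1}+(1-\lambda)c_{2}$. The first term lies in $F(\lambda x_{1}+(1-\lambda)x_{2})+C_{Y}$ by $C_{Y}$-convexity, while the second lies in $C_{Y}$ by the cone properties; absorbing the surplus $C_{Y}$ through $C_{Y}+C_{Y}\subseteq C_{Y}$, and invoking convexity of $\hat{S}$ to keep the first coordinate in $\hat{S}$, yields $\lambda(x_{1},y_{1})+(1-\lambda)(x_{2},y_{2})\in epi(F)$, so $epi(F)$ is convex.

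For the converse I would assume $epi(F)$ convex and fix $x_{1},x_{2}\in\hat{S}$, $\lambda\in[0,1]$, and an arbitrary $z=\lambda y_{1}+(1-\lambda)y_{2}$ with $y_{k}\in F(x_{k})$. Since $0\in C_{Y}$ forces $(x_{k},y_{k})\in epi(F)$, convexity of $epi(F)$ places the convex combination back in $epi(F)$, which unwinds to $z\in F(\lambda x_{1}+(1-\lambda)x_{2})+C_{Y}$. As $z$ ranges over $\lambda F(x_{1})+(1-\lambda)F(x_{2})$, this is precisely the $C_{Y}$-convexity inclusion.

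The argument is essentially bookkeeping, so I do not expect a genuine obstacle. The only point requiring care is the systematic use of the convex-cone identities for $C_{Y}$ — in particular $C_{Y}+C_{Y}\subseteq C_{Y}$ in the forward direction and $0\in C_{Y}$ in the converse — which is exactly where a careless version of the proof could quietly fail.
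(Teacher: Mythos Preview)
Your proposal is correct and follows essentially the same argument as the paper's proof: both directions are handled by unwinding the definitions, using convexity of $\hat{S}$ and the cone properties of $C_{Y}$. The only cosmetic difference is that you make the decomposition $y_{k}=v_{k}+c_{k}$ explicit, whereas the paper works directly with the set inclusions $y_{k}\in F(x_{k})+C_{Y}$ and $\lambda(F(x_{1})+C_{Y})+(1-\lambda)(F(x_{2})+C_{Y})=\lambda F(x_{1})+(1-\lambda)F(x_{2})+C_{Y}$.
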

\begin{proof}
$(\Leftarrow)$ Let $F$ be $C_{Y}$-convex. Take arbitrary elements $(x_1,y_1), (x_2,y_2)\in epi(F)$ and $\lambda \in [0,1]$. Because of the convexity of $\hat{S}$ we have
\begin{equation*}
    \lambda x_1 + (1-\lambda) x_2 \in \hat{S},
\end{equation*}
and since $F$ is $C_{Y}$-convex, we obtain
\begin{align*}
    \lambda y_1 + (1-\lambda) y_2 &\in \lambda (F(x_1) + C_{Y}) + (1-\lambda) (F(x_2) + C_{Y})\\
    &= \lambda F(x_1) + (1-\lambda)F(x_2) + C_{Y}\\
    &\subseteq F(\lambda x_1 + (1-\lambda) x_2) + C_{Y},
\end{align*}
which implies
\begin{equation*}
    \lambda (x_1,y_1) + (1-\lambda) (x_2,y_2) \in epi(F).
\end{equation*}
Consequently, $epi(F)$ is a convex set.
\newline
$(\Rightarrow)$ On the other hand, now we assume that $epi(F)$ is a convex set. Let $x_1,x_2\in \hat{S}, y_1\in F(x_1), y_2\in F(x_2)$ and $\lambda \in [0,1]$ be arbitrary given. Because of the convexity of $epi(F)$ we obtain
\begin{equation*}
    \lambda (x_1,y_1) + (1-\lambda)(x_2,y_2)\in epi(F)
\end{equation*}
implying
\begin{equation*}
    \lambda y_1 + (1-\lambda)y_2 \in F(\lambda x_1+(1-\lambda) x_2) + C_{Y}.
\end{equation*}
Hence, $F$ is $C_{Y}$-convex.
\end{proof}

\begin{definition}
Let the assumption mentioned above be satisfied. Let a pair $(x^{\ast},y^{\ast})$ with $x^{\ast}\in S$ and $y^{\ast}\in F(x^{\ast})$ be given. A single-valued map $DF(x^{\ast},y^{\ast}):X\to Y$ whose epigraph equals the contingent cone to the epigraph of $F$ at $(x^{\ast},y^{\ast})$, i.e.,
\begin{equation*}
    epi(DF(x^{\ast},y^{\ast}) = T(epi(F),(x^{\ast},y^{\ast})),
\end{equation*}
is called contingent epiderivative of $F$ at $(x^{\ast},y^{\ast})$.
\end{definition}

\begin{theorem}
Let the assumption mentioned above be satisfied, and, in addition, let $C_{Y}$ be pointed, let $\hat{S}$ be convex, and let $F$ be $C_{Y}$-convex. If the contingent epiderivative $DF(x^{\ast},y^{\ast})$ of $F$ at $(x^{\ast},y^{\ast})$ exists, then it is sublinear, namely positive homogeneous and subadditive.
\end{theorem}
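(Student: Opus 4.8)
The plan is to deduce sublinearity of $L:=DF(x^{\ast},y^{\ast})$ directly from the geometry of its epigraph, which by definition coincides with the contingent cone $T(epi(F),(x^{\ast},y^{\ast}))$. The key structural fact I would establish first is that this epigraph is a \emph{convex cone}; once that is in hand, positive homogeneity and subadditivity can be read off from the scaling and addition closure of a convex cone.

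First I would check that $(x^{\ast},y^{\ast})\in epi(F)$: indeed $x^{\ast}\in S\subseteq\hat S$ and, since $0\in C_{Y}$, $y^{\ast}\in F(x^{\ast})\subseteq F(x^{\ast})+C_{Y}$. Because $\hat S$ is convex and $F$ is $C_{Y}$-convex, Theorem 7.2.1 tells me $epi(F)$ is convex, and then Theorem 2.3.5 applied at $(x^{\ast},y^{\ast})$ shows the contingent cone $T(epi(F),(x^{\ast},y^{\ast}))$ is convex. It is a cone by its very definition (multiplying the defining limit $\alpha_{k}(u^{k}-u^{\ast})\to v$ by any $\beta\ge 0$ keeps $v$ in the set), so $epi(L)$ is a convex cone. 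Using that $L$ is single-valued I may write $epi(L)=\{(x,y)\mid L(x)\le_{C_{Y}}y\}$, so in particular $(x,L(x))\in epi(L)$ for every $x$ in the domain of $L$, and closure under scaling and addition forces this domain to be a convex cone on which the claimed identities make sense.

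For positive homogeneity, fix $\alpha>0$. Scaling $(x,L(x))$ by $\alpha$ and using the cone property gives $(\alpha x,\alpha L(x))\in epi(L)$, i.e. $L(\alpha x)\le_{C_{Y}}\alpha L(x)$; scaling $(\alpha x,L(\alpha x))$ by $1/\alpha$ gives the reverse relation $\alpha L(x)\le_{C_{Y}}L(\alpha x)$. The main obstacle is exactly this point: the cone property yields only the two opposite inequalities, and to collapse them into the equality $L(\alpha x)=\alpha L(x)$ I must invoke that $C_{Y}$ is pointed, so that $\le_{C_{Y}}$ is antisymmetric by Theorem 2.4.1. This is the sole place where pointedness of $C_{Y}$ is genuinely needed.

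For subadditivity I would take $x_{1},x_{2}$ in the domain of $L$ and note that $(x_{1},L(x_{1}))$ and $(x_{2},L(x_{2}))$ both lie in $epi(L)$. Convexity places their midpoint in $epi(L)$, and scaling it by $2$ via the cone property yields $(x_{1}+x_{2},L(x_{1})+L(x_{2}))\in epi(L)$, which is precisely $L(x_{1}+x_{2})\le_{C_{Y}}L(x_{1})+L(x_{2})$. Since no antisymmetry is required here, subadditivity is the routine half, and the real content of the theorem is the homogeneity equality together with its essential use of pointedness of $C_{Y}$.
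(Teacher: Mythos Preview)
The paper states this theorem without proof; it is simply asserted and later invoked (for positive homogeneity) in the proof of Theorem~7.3.1, with the implicit reference to Jahn's book. So there is no ``paper's own proof'' to compare against here.

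Your argument is correct and is in fact the standard one. The logic is sound: $epi(F)$ is convex by Theorem~7.2.1, so the contingent cone $T(epi(F),(x^{\ast},y^{\ast}))=epi(L)$ is a convex cone by Theorem~2.3.5 and the definition of the contingent cone; positive homogeneity then follows by scaling up and down and invoking antisymmetry of $\le_{C_{Y}}$ (Theorem~2.4.1, using pointedness), and subadditivity follows from convexity plus the cone property. Your identification of pointedness as the crucial hypothesis for the homogeneity \emph{equality} is exactly right. One small remark: you treat only $\alpha>0$ for homogeneity; the case $\alpha=0$ (i.e.\ $L(0_X)=0_Y$) can be handled by noting $(0_X,0_Y)\in epi(L)$ gives $L(0_X)\le_{C_Y}0_Y$, while scaling $(0_X,L(0_X))$ by $2$ gives $L(0_X)\le_{C_Y}2L(0_X)$, hence $0_Y\le_{C_Y}L(0_X)$, and antisymmetry again closes the loop.
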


\subsection{Lagrangian Methods in Set-Valued Optimization}
\begin{theorem}
\textbf{(Fritz John Condition)} Let the cone $C_Y$ have a nonempty interior int$\left( C_{Y} \right)$, let the set $\hat{S}$ be convex, and let the maps $F$ and $G$ be $C_{Y}$-convex and $C_{Z}$-convex, respectively. Assume that $\left( x^{\ast},y^{\ast} \right) \in X\times Y$ with $x^{\ast}\in S$ and $y^{\ast}\in F(x^{\ast})$ is a weak minimizer of the problem $\mathrm{(P3)}$. Let the contingent epiderivative of $\left( F,G \right)$ at $\left( x^{\ast},\left( y^{\ast}, z^{\ast} \right) \right)$ for an arbitrary $z^{\ast}\in G(x^{\ast})\cap \left( -C_{Z} \right)$ exist. Then there are continuous linear functionals $t\in C_{Y^{\ast}}$ and $u\in C_{Z^{\ast}}$ with $\left( t,u \right)\neq \left( 0_{Y^{\ast}},0_{Z^{\ast}} \right)$ so that
\begin{equation*}
    t(y) + u(z) \geq 0\:for\:all\:(y,z) = D(F,G)(x^{\ast},(y^{\ast},z^{\ast}))(x- x^{\ast}) \:\:with\:\: x\in \hat{S}
\end{equation*}
and
\begin{equation*}
    u(z^{\ast})=0.
\end{equation*}
If in addition to the above assumptions, the regularity assumption
\begin{equation}
    \{ z\:\vert\: (y,z)\in D(F,G)(x^{\ast},(y^{\ast},z^{\ast}))(cone(S-\{x^{\ast}\}))\} + cone(C_{Z}+\{z^{\ast}\}) = Z
\end{equation}
is satisfied, then $t\neq 0_{Y^{\ast}}$.
\end{theorem}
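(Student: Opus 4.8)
The plan is to convert the first-order information carried by the contingent epiderivative into a single convex cone in $Y\times Z$ and then to separate that cone from the strictly improving directions of the objective. Write $D := D(F,G)(x^{\ast},(y^{\ast},z^{\ast}))$ and split it into components $D=(D_{Y},D_{Z})$. Since $\hat S$ is convex and $F,G$ are $C_{Y}$- and $C_{Z}$-convex, the product map $(F,G)$ is $C_{Y}\times C_{Z}$-convex, so by the product version of Theorem 7.2.1 its epigraph is convex; consequently the contingent cone $T(epi(F,G),(x^{\ast},(y^{\ast},z^{\ast})))=epi(D)$ is a convex cone and, by Theorem 7.2.2, $D$ is sublinear. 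These two facts are what make the construction below behave well.

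First I would introduce the set
\begin{equation*}
    \mathcal{A} := D(cone(\hat S-\{x^{\ast}\})) + (C_{Y}\times C_{Z}) + cone\{(0_{Y},z^{\ast})\}\subseteq Y\times Z .
\end{equation*}
Its convexity is not immediate, since $D$ need not have convex image; but $D(cone(\hat S-\{x^{\ast}\}))+C_{Y}\times C_{Z}$ is exactly the projection to $Y\times Z$ of the convex set $epi(D)\cap\bigl(cone(\hat S-\{x^{\ast}\})\times Y\times Z\bigr)$, and adjoining the ray $cone\{(0,z^{\ast})\}$ preserves convexity; being a sum of cones, $\mathcal{A}$ is a convex cone. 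By construction $\mathcal A$ contains $C_{Y}\times\{0\}$, $\{0\}\times C_{Z}$, the ray $\{0\}\times cone\{z^{\ast}\}$, and every point $D(x-x^{\ast})$ with $x\in\hat S$; these four memberships are precisely what will produce, respectively, $t\in C_{Y^{\ast}}$, $u\in C_{Z^{\ast}}$, $u(z^{\ast})\ge 0$, and the asserted inequality once a separating functional is in hand.

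The decisive step is the key lemma that $\mathcal A$ is disjoint from $-int(C_{Y})\times\{0_{Z}\}$. Unwinding the definitions, a common point would furnish a direction $d\in cone(\hat S-\{x^{\ast}\})$ with $D_{Y}(d)\in -int(C_{Y})$ and $D_{Z}(d)\in -cone(C_{Z}+\{z^{\ast}\})$; the first says the objective strictly improves to first order, and the second, using $z^{\ast}\in -C_{Z}$, says feasibility of the constraint is maintained to first order. I would rule this out by unfolding the contingent cone: because $epi(F,G)$ is convex, $epi(D)$ is the closure of $\bigcup_{\lambda>0}\lambda\bigl(epi(F,G)-(x^{\ast},(y^{\ast},z^{\ast}))\bigr)$, so such a $d$ yields sequences $x^{k}\to x^{\ast}$ in $\hat S$, with objective values landing in $y^{\ast}-int(C_{Y})$ while the constraint values remain admissible, contradicting that $(x^{\ast},y^{\ast})$ is a weak minimizer. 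This reconstruction of genuine feasible improving points from a mere tangent direction — in particular absorbing the constraint slack so that $G(x^{k})\cap(-C_{Z})\ne\emptyset$ — is the main obstacle; the hypothesis $int(C_{Y})\ne\emptyset$ is exactly what lets the strict objective improvement survive passage to the limit.

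With disjointness established, I would invoke a separation theorem for disjoint convex sets applied to $\mathcal A$ and the nonempty convex set $-int(C_{Y})\times\{0_{Z}\}$, the interior hypothesis on $C_{Y}$ supplying the relatively open direction that makes separation legitimate. This produces $(t,u)\ne(0_{Y^{\ast}},0_{Z^{\ast}})$ with $t(y)+u(z)\ge 0$ on $\mathcal A$ and $t(c)\ge 0$ for all $c\in int(C_{Y})$. Testing the inequality on the four generating families listed above gives $t\in C_{Y^{\ast}}$, $u\in C_{Z^{\ast}}$, $u(z^{\ast})\ge 0$, and $t(y)+u(z)\ge 0$ for every $(y,z)=D(x-x^{\ast})$ with $x\in\hat S$; since $z^{\ast}\in -C_{Z}$ and $u\in C_{Z^{\ast}}$ force $u(z^{\ast})\le 0$, the complementary slackness $u(z^{\ast})=0$ follows. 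Finally, for the regularity statement I would argue by contradiction: if $t=0$ then $u\ne 0$, and the main inequality gives $u(z)\ge 0$ for all $z\in\{z:(y,z)\in D(cone(S-\{x^{\ast}\}))\}$, while $u\in C_{Z^{\ast}}$ together with $u(z^{\ast})=0$ gives $u\ge 0$ on $cone(C_{Z}+\{z^{\ast}\})$; summing these two and invoking the regularity assumption forces $u\ge 0$ on all of $Z$, whence $u=0$, a contradiction. Therefore $t\ne 0_{Y^{\ast}}$.
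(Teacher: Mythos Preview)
Your overall architecture---build a convex cone from the epiderivative data, separate it from the strictly improving directions, then read off $t,u$---matches the paper's, and your treatment of convexity of $\mathcal{A}$, the complementary slackness $u(z^{\ast})=0$, and the regularity step are all correct. The genuine gap is the choice of the set you separate from: you use $-int(C_{Y})\times\{0_{Z}\}$, whereas the paper separates its set $M=\bigl[\bigcup_{x\in\hat S}D(F,G)(x^{\ast},(y^{\ast},z^{\ast}))(x-x^{\ast})\bigr]+C_{Y}\times(C_{Z}+\{z^{\ast}\})$ from $(-int(C_{Y}))\times(-int(C_{Z}))$. That ``$int$'' in the $Z$-factor is essential, not cosmetic.

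Your key disjointness lemma is in fact false. Take $X=Y=Z=\mathbb{R}$, $C_{Y}=C_{Z}=\mathbb{R}_{+}$, $\hat S=\mathbb{R}$, $F(x)=x$, $G(x)=x^{2}$; then $S=\{0\}$, $(x^{\ast},y^{\ast},z^{\ast})=(0,0,0)$ is a weak minimizer, $D(x)=(x,0)$, and one computes $\mathcal{A}=\mathbb{R}\times\mathbb{R}_{+}$, which \emph{contains} $(-\infty,0)\times\{0\}=-int(C_{Y})\times\{0_{Z}\}$. The failure in your contradiction sketch is exactly the line ``the constraint values remain admissible'': from $D_{Z}(d)\in -C_{Z}-cone\{z^{\ast}\}$ you only get the approximating $z_{n}$ converging to something on the \emph{boundary} of $-C_{Z}$, which does not force $z_{n}\in -C_{Z}$; in the paper's version the limit lies in $-int(C_{Z})$, an open set, so eventually $z_{n}\in -int(C_{Z})$ and feasibility of $x_{n}$ follows. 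There is a second, independent problem: $-int(C_{Y})\times\{0_{Z}\}$ has empty interior in $Y\times Z$, so even had disjointness held, the standard Hahn--Banach separation theorem in normed spaces would not apply; the paper sidesteps this by separating from the genuinely open product $(-int(C_{Y}))\times(-int(C_{Z}))$.
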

\begin{proof}
In the product space $Y\times Z$ we define for an arbitrary $z^{\ast}\in G(x^{\ast})\cap (-C_{Z})$ the following set:
\begin{equation*}
    M = \left[ \bigcup_{x\in \hat{S}} D(F,G)(x^{\ast},(y^{\ast},z^{\ast}))(x- x^{\ast})\right] + (C_{Y}\times (C_{Z}+\{z^{\ast}\})).
\end{equation*}
The proof of this theorem consists of several steps. First, we prove two important properties of this set $M$ and then we apply a separation theorem in order to obtain the multiplier rule. Finally, we show $t\neq 0_{Y^{\ast}}$ under the regularity assumption.
\par \textbf{(a)} We show that the nonempty set $M$ is convex. We prove the convexity for the translated set $M'=M-\{(0_{Y},z^{\ast})\}$ and immediately get the desired result. For this proof we fix two arbitrary pairs $(y_1,z_1),(y_2,z_2)\in M'$. Then there are elements $x_1,x_2 \in \hat{S}$ with
\begin{equation*}
    (y_i,z_i)\in D(F,G)(x^{\ast},(y^{\ast},z^{\ast}))(x_{i}- x^{\ast})+(C_Y\times C_Z), \quad i = 1,2
\end{equation*}
which equals to
\begin{equation*}
    D(F,G)(x^{\ast},(y^{\ast},z^{\ast}))(x_{i}- x^{\ast})\leq_{C_{Y}\times C_{Z}}(y_{i},z_{i}),\quad i = 1,2
\end{equation*}
resulting in
\begin{align*}
    (x_i-x^{\ast},(y_i,z_i)) &\in epi(D(F,G)(x^{\ast},(y^{\ast},z^{\ast})))\\
    &= T(epi(F,G),(x^{\ast},(y^{\ast},z^{\ast}))), \quad i = 1,2
\end{align*}
    
This contingent cone is convex because the map $(F,G)$ is cone-convex and, therefore, the epigraph $epi(F,G)$ is a convex set (see Thm 7.2.1). Then we obtain for all $\lambda \in [0,1]$
\begin{equation*}
    \lambda (x_1-x^{\ast},(y_1,z_1)) + (1-\lambda)(x_2-x^{\ast},(y_2,z_2))\in T(epi(F,G),(x^{\ast},(y^{\ast},z^{\ast}))),
\end{equation*}
implying
\begin{equation*}
    (\lambda y_1 + (1-\lambda)y_2,\lambda z_1 +(1-\lambda)z_2)\in D(F,G)(x^{\ast},(y^{\ast},z^{\ast}))(\lambda x_1+(1-\lambda)x_2-x^{\ast})+(C_{Y}\times C_{Z}).
\end{equation*}
Consequently, the set $M$ is convex.

\par \textbf{(b)} In the next step of the proof we show the equality
\begin{equation}
    M\bigcap \Big[ \left( -int(C_Y)\right) \times \left( -int(C_Z)\right) \Big] = \emptyset.
\end{equation}
Assume that this equality does not hold. Then there are elements $x\in \hat{S}$ and $(y,z)\in Y\times Z$ with
\begin{equation}
    \begin{aligned}
    (y,z+z^{\ast}) &\in \Big[ D(F,G)(x^{\ast},(y^{\ast},z^{\ast}))(x-x^{\ast}))+(C_Y\times (C_Z+\{z^{\ast}\})) \Big]\\
    & \cap \Big[ (-int(C_Y)\times (-int(C_Z))) \Big],
    \end{aligned}
\end{equation}
implying
\begin{equation*}
    (x-x^{\ast},(y,z))\in T(epi(F,G),(x^{\ast},(y^{\ast},z^{\ast}))).
\end{equation*}
This means that there are sequences $(x_n,(y_n,z_n))_{n\in \mathbb{N}}$ of elements in $epi(F,G)$ and a sequence $(\lambda_n)_{n\in \mathbb{N}}$ of positive real numbers with
\begin{equation*}
    (x^{\ast},(y^{\ast},z^{\ast})) = \lim_{n\to \infty}(x_n,(y_n,z_n))
\end{equation*}
and
\begin{equation}
    (x-x^{\ast},(y,z)) = \lim_{n\to \infty} \lambda_n (x_n-x^{\ast},(y_n-y^{\ast},z_n-z^{\ast})).
\end{equation}
Since $y\in -int(C_Y)$ by (38), we conclude $\lambda_n(y_n-y^{\ast})\in -int(C_Y)$ resulting in
\begin{equation}
    y_n\in y^{\ast}-int(C_Y)
\end{equation}
for sufficiently large $n\in \mathbb{N}$. Because of $(x_n,(y_n,z_n))\in epi(F,G)$ for all $n\in \mathbb{N}$ there are elements $\hat{y_n}\in F(x_n)$ with
\begin{equation*}
    y_n\in \{ \hat{y_n}\} + C_Y,\quad n\in \mathbb{N}.
\end{equation*}
Together with (40), for sufficiently large $n\in \mathbb{N}$ we obtain
\begin{equation*}
    \hat{y_n}\in \{ y^{\ast}\}-int(C_Y)-C_Y=\{ y^{\ast} \}-int(C_Y)
\end{equation*}
or
\begin{equation}
    (\{ y^{\ast} \}-int(C_Y))\cap F(x_n) \neq \emptyset
\end{equation}
for sufficiently large $n\in \mathbb{N}$. Moreover, from (38) we conclude $z+z^{\ast}\in -int(C_Z)$, and with (39) we obtain
\begin{equation*}
    \lambda_n(z_n-z^{\ast})+z^{\ast}\in -int(C_Z)
\end{equation*}
or
\begin{equation*}
    \lambda_n(z_n-(1-\frac{1}{\lambda_n})z^{\ast})\in -int(C_Z)
\end{equation*}
for sufficiently large $n\in \mathbb{N}$, implying 
\begin{equation}
    z_n-(1-\frac{1}{\lambda_n})z^{\ast}\in -int(C_Z).
\end{equation}
Since $y\neq 0_Y$ by (38), we conclude with (39) that $\lambda_n > 1$ for sufficiently large $n\in \mathbb{N}$. By assumption we have $z^{\ast}\in -C_Z$ and, therefore, we get from (42)
\begin{equation}
    z_n \in -C_Z-int(C_Z) = -int(C_Z).
\end{equation}
Because of $(x_n,(y_n,z_n))\in epi(F,G)$ for all $n\in \mathbb{N}$ there are elements $\hat{z_n}\in G(x_n)$ with
\begin{equation*}
    z_n\in {\hat{z_n}} + C_Z,\quad n\in \mathbb{N}.
\end{equation*}
Combined with (43),for sufficiently large $n\in \mathbb{N}$ we then get
\begin{equation*}
    \hat{z_n}\in {z_n}-C_Z\subseteq -int(C_Z)
\end{equation*}
and
\begin{equation}
    \hat{z_n}\in G(x_n)\cap (-C_Z).
\end{equation}
Hence, for sufficiently large $n\in \mathbb{N}$ we have $x_n\in \hat{S}$, $(\{ y^{\ast}\}-int(C_Y))\cap F(x_n) \neq \emptyset$ by (41), and $G(x_n)\cap (-C_Z)\neq \emptyset$ by (44) and therefore $(x^{\ast},y^{\ast})$ is not a weak minimizer of the problem $\mathrm{(P3)}$, which is a contradiction to the assumption of the theorem.

\par \textbf{(c)} In this step we now prove the first part of the theorem. By part (a) the set $M$ is convex and by (b) and equality (37) holds. By convex sets separation theorem, there are continuous linear functionals $t\in Y^{\ast}$ and $u\in Z^{\ast}$ with $(t,u)\neq (0_{Y^{\ast}},0_{Z^{\ast}})$ and a real number $\gamma >0$ so that
\begin{equation}
    t(c_Y)+u(c_Z)<\gamma \leq t(y)+u(z),\quad \forall c_Y\in -int(C_Y),\:c_Z\in -int(C_Z),\:(y,z)\in M.
\end{equation}
Since $(0,z^{\ast})\in M$, we obtain from (45) for $c_Y = 0_Y$
\begin{equation}
    u(c_Z)<u(z^{\ast}),\quad \forall c_Z\in -int(C_Z).
\end{equation}
If we assume that $u(c_Z)>0$ for a $c_Z\in -int(C_Z)$, we get a contradiction to (46) because $C_Z$ is a cone. Therefore, we obtain the fact that
\begin{equation*}
    u(c_Z)\leq 0,\quad \forall c_Z\in -int(C_Z),
\end{equation*}
resulting in $u\in C_{Z^{\ast}}$ because $C_Z\subseteq cl(int(C_Z))$. For $(0,z^{\ast})\in M$ and $c_Z = 0_Z$ we get from (45)
\begin{equation}
    t(c_Y)<u(z^{\ast})\leq 0,\quad \forall c_Y\in -int(C_Y)
\end{equation}
(notice that $z^{\ast}\in -C_Z$ and $u\in C_{Z^{\ast}}$). This inequality implies $t\in C_{Y^{\ast}}$. From (46) and (47) we immediately obtain $u(z^{\ast})=0$. In order to prove the inequality of the multiplier rule we conclude from (45) with $c_Y = 0_Y$ and $c_Z = 0_Z$
\begin{equation*}
    t(y)+u(z)\geq 0,\quad \forall (y,z) = D(F,G)(x^{\ast},(y^{\ast},z^{\ast}))(x-x^{\ast})\:\,with\:\, x\in \hat{S}.
\end{equation*}
Hence, the first part of the theorem is shown.

\par \textbf{(d)} Finally, we prove $t\neq 0_{Y^{\ast}}$ under the regularity assumption (36). For an arbitrary $\hat{z}\in Z$ there are elements $x\in \hat{S}, c_Z\in C_Z$ and non-negative real numbers $\alpha$ and $\beta$ with
\begin{equation*}
    \hat{z} = z + \beta(c_Z+z^{\ast}) \:for\: (y,z) = D(F,G)(x^{\ast},(y^{\ast},z^{\ast}))(\alpha(x-x^{\ast})).
\end{equation*}
Since $D(F,G)(x^{\ast},(y^{\ast},z^{\ast}))$ is positively homogeneous by Thm 7.2.2, (notice that we do not need the cone-convexity for this proof), we can write
\begin{equation*}
    (y,z) = \alpha D(F,G)(x^{\ast},(y^{\ast},z^{\ast}))(x-x^{\ast}):=\alpha(\Tilde{y},\Tilde{z}).
\end{equation*}
Assume that $t=0_{Y^{\ast}}$. Then we conclude from the multiplier rule
\begin{equation*}
    \begin{aligned}
    u(\hat{z}) &= u(z)+\beta u(c_Z+z^{\ast})\\
               &= \alpha u(\Tilde{z}) + \beta u(c_Z) + \beta u(z^{\ast})\\
               &= \alpha (u(\Tilde{z})+t(\Tilde{y})) + \beta u(c_Z) + \beta u(z^{\ast})\\
               &\geq 0.
    \end{aligned}
\end{equation*}
Because $\hat{z}$ is arbitrary chosen we have
\begin{equation*}
    u(\hat{z})\geq 0,\quad \forall \hat{z}\in Z,
\end{equation*}
implying $u = 0_{Z^{\ast}}$. But this is a contradiction to $(t,u)\neq (0_{Y^{\ast}},0_{Z^{\ast}})$.
\end{proof}

\begin{framed}
\noindent \emph{Remark:} This theorem extends the Lagrange multiplier rule as a necessary optimality condition to set-valued optimization. It also extends the so-called Karush-Kuhn-Tucker condition if $t\neq 0_{Y^{\ast}}$. Since a minimizer of the problem $\mathrm{(P3)}$ is also a weak minimizer, this multiplier rule is a necessary optimality condition for a minimizer as well. Besides, the regularity condition extends  the concept of constraint qualifications to set-valued optimization. 
\end{framed}

\section{On the Limits of the Lagrange Multiplier Rule}
In this section, we will first show two examples where the Lagrangian multiplier rule fails, then we will analyze these examples in detail and give a more precise description of classical Karush-Kuhn-Tucker theorem. This part is mainly refer to Luis A.Fernandez's paper \cite{fernandez1997}.

\subsection{The Failure of Lagrangian Methods}
As in \textsection3.1, one is tempted to summarize the Karush-Kuhn-Tucker theorem (as did the well-known expert Ioffe in \cite{ioffe1993lagrange}) by saying that, ``... in problems with finite equality constraints, the Lagrange multiplier rule is valid under the assumption that the cost functions and constraint functions are only Fr$\Acute{e}$chet differentiable at the solution."
\par However, actually one cannot remove the hypothesis of continuity of the constrain functions in a neighborhood of the solution. As we will see the following two examples below.
\begin{framed}
\noindent \emph{\textbf{Example 1.}} Let's consider the functions $f_{0},f_{1}: \mathbb{R}^n\to \mathbb{R}$ defined by $f_{0}(x,y) = x$ and
\begin{equation*}
    f_{1}(x,y) = \begin{cases}
    y & \text{if } x \geq 0;\\
    y-x^2 & \text{if } x<0,\:y\leq 0;\\
    y+x^2 & \text{if } x<0,\:y>0.
    \end{cases}
\end{equation*}
We consider the following optimization problem:
\begin{equation*}
\begin{aligned}
\min \quad & f_{0}(x,y)\\
\textrm{s.t.} \quad & f_{1}(x,y) = 0.\\
\end{aligned}
\end{equation*}
\end{framed}

\noindent \emph{\textbf{Solution 1.}} Evidently $(0,0)$ is the unique solution of the problem above. It's easy to calculate that $\nabla f_{0}(0,0) = (1,0)$ and $\nabla f_{1}(0,0) = (0,1)$. If the multiplier rule were valid in this situation, then there would exists some nonzero $(\lambda_{0},\lambda_{1})\in \mathbb{R}^2$ such that
\begin{equation*}
    \lambda_{0} \nabla f_{0}(0,0) + \lambda_{1} \nabla f_{1}(0,0) = (\lambda_{0},0) + (0,\lambda_{1}) = (0,0),   
\end{equation*}
which contradicts our assumption.

\begin{framed}
\noindent \emph{\textbf{Example 2.}} Let's consider the functions $f_{0},f_{1}:\mathbb{R}^n\to \mathbb{R}$ defined by $f_{0}(x,y)=y$ and \begin{equation*}
    f_{1}(x,y) = \begin{cases}
    x & \text{if } y\geq 0;\\
    e^x+y^2-1 & \text{if } y<0,\:x\geq 0;\\
    e^x-y^2-1 & \text{if } y<0,\:x<0.
    \end{cases}
\end{equation*}
We consider the following optimization problem:
\begin{equation*}
\begin{aligned}
\min \quad & f_{0}(x,y)\\
\textrm{s.t.} \quad & f_{1}(x,y) = 0.\\
\end{aligned}
\end{equation*}
\end{framed}

\noindent \emph{\textbf{Solution 2.}} Evidently $(0,0)$ is the unique solution of the problem above. It's easy to calculate that $\nabla f_{0}(0,0) = (0,1)$ and $\nabla f_{1}(0,0) = (1,0)$. If the multiplier rule were valid in this situation, then there would exists some nonzero $(\lambda_{0},\lambda_{1})\in \mathbb{R}^2$ such that
\begin{equation*}
    \lambda_{0} \nabla f_{0}(0,0) + \lambda_{1} \nabla f_{1}(0,0) = (0,\lambda_{0}) + (\lambda_{1},0) = (0,0),
\end{equation*}
which contradicts our assumption.
\begin{framed}
\noindent \emph{Remark:} The above two examples really confuse us. It's obvious to verify that $f_{0}(x)$ and $f_{1}(x)$ are Fr$\Acute{e}$chet differentiable at $(0,0)$, but why the Lagrange multiplier rule fails? We will make further explanation in \textsection 8.2.
\end{framed}

\subsection{More Precise Claim}
In Luis A.Fernandez's paper \cite{fernandez1997}, the author makes the following claim, emphasizing the importance of local continuity.
~\par ~
\par \noindent \emph{\textbf{Claim.}} If the function defining one of the equality constraints is Fr$\Acute{e}$chet differentiable at the solution and discontinuous in every neighborhood of the solution, then the Lagrange multiplier rule can fail.
\par Now we can easily verify the discontinuity of constraint functions in the above two optimization problems respectively by using straightforward arguments. 
\begin{framed}
\noindent \emph{\textbf{Verification Example 1.}}
\par (1) $f_{0}$ is continuously Fr$\Acute{e}$chet differentiable in $\mathbb{R}^2$;
\par (2) $f_{1}$ is continuous function in $\mathbb{R}^2\setminus \{(x,0):x<0\}$; in particular, $f_{1}$ is discontinuous in every neighborhood of $(0,0)$;
\par (3) $f_{1}$ is Fr$\Acute{e}$chet differentiable at $(0,0)$.
~\par ~
\par \noindent \emph{\textbf{Verification Example 2.}}
\par (1) $f_{0}$ is continuously Fr$\Acute{e}$chet differentiable in $\mathbb{R}^2$;
\par (2) $f_{1}$ is continuous function in $\mathbb{R}^2\setminus \{(y,0):y<0\}$; in particular, $f_{1}$ is discontinuous in every neighborhood of $(0,0)$;
\par (3) $f_{1}$ is Fr$\Acute{e}$chet differentiable at $(0,0)$.
\end{framed}
According to the verification above, we know the equality constraints $f_{1}$ are Fr$\Acute{e}$chet differentiable at the optimal solution but not continuous in any neighborhood of the optimal solution, thus the Lagrange multiplier rule can fail. The following more precise result concerning Lagrange multiplier rule is presented, belonging to Halkin \cite{halkin1974implicit}.

\begin{theorem}
\textbf{(Fritz John Condition)} Let $x^{\ast}$ be a local optimal solution of problem $\mathrm{(P1)}$, objective function $f:\mathbb{R}^n \mapsto \mathbb{R}$, constraint functions $g_{i}:\mathbb{R}^n \mapsto \mathbb{R}(i = 1,\dots,m)$ and $h_{j}:\mathbb{R}^n \mapsto \mathbb{R}(j = 1,\dots,n)$ are all Fr$\Acute{e}$chet differentiable at $x^{\ast}$. Constraint functions $h_{j}:\mathbb{R}^n \mapsto \mathbb{R}(j = 1,\dots,n)$ are all continuous in a neighborhood of $x^{\ast}$. Then there exist $\overline{\lambda_{0}}, \overline{\lambda_{1}}, \dots, \overline{\lambda_{m}}$ and $\overline{\mu_{1}},\dots,\overline{\mu_{n}}$ satisfying
\begin{equation*}
\begin{aligned}
&\overline{\lambda_{0}}\nabla f(x^{\ast}) + \sum_{i=1}^m \overline{\lambda_{i}} \nabla g_{i}(x^{\ast}) + \sum_{j=1}^n \overline{\mu_{j}} \nabla h_{j}(x^{\ast}) = 0,\\
&\overline{\lambda_{i}} \geq 0,\: \overline{\lambda_{i}}g_{i}(x^{\ast}) = 0,\: i = 1,\dots,m
\end{aligned}
\end{equation*}
\end{theorem}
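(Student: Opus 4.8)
The plan is to argue by contradiction and to isolate exactly where the neighborhood continuity of the equality constraints enters, since the examples of \textsection 8.1 show this hypothesis cannot be dropped. First I would suppose that no nontrivial multiplier vector $(\overline{\lambda_{0}},\overline{\lambda_{1}},\dots,\overline{\lambda_{m}},\overline{\mu_{1}},\dots,\overline{\mu_{n}})$ satisfying the conclusion exists. Without loss of generality the gradients $\nabla h_{j}(x^{\ast})$ may be assumed linearly independent, for otherwise a nontrivial linear relation among them already furnishes multipliers with $\overline{\lambda_{0}}=0$ and all $\overline{\lambda_{i}}=0$, and the theorem holds trivially. Under this independence assumption, the non-existence of multipliers is equivalent, by a Motzkin-type theorem of the alternative (which rests on the same separation machinery behind Theorem 2.3.3 and Corollary 2.3.1), to the existence of a direction $d\in\mathbb{R}^n$ with
\begin{equation*}
\langle \nabla f(x^{\ast}),d \rangle < 0,\quad \langle \nabla g_{i}(x^{\ast}),d \rangle < 0\ (i\in I(x^{\ast})),\quad \langle \nabla h_{j}(x^{\ast}),d \rangle = 0\ (j=1,\dots,n).
\end{equation*}

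Next I would exploit Fréchet differentiability at $x^{\ast}$ to expand each datum along small perturbations of $x^{\ast}$. Since differentiability at a point forces continuity \emph{at that point}, every inactive inequality constraint has $g_{i}(x^{\ast})<0$ and hence stays strictly negative for $x$ near $x^{\ast}$; meanwhile, along the ray $x^{\ast}+td$ the objective and the active constraints strictly decrease to first order, so $f(x^{\ast}+td)<f(x^{\ast})$ and $g_{i}(x^{\ast}+td)<0$ for all sufficiently small $t>0$. The equality constraints, however, only satisfy $h_{j}(x^{\ast}+td)=o(t)$ rather than vanishing exactly, so the perturbed point $x^{\ast}+td$ need not be feasible, and a genuine contradiction with optimality is not yet in hand.

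The crux of the proof, and the step where the extra hypothesis is indispensable, is to correct this $o(t)$ defect in the equality constraints. Using that $\{\nabla h_{j}(x^{\ast})\}$ are linearly independent, I would set up, for each small $t>0$, a continuous self-map of a ball of radius $o(t)$ inside the $n$-dimensional subspace spanned by these gradients, whose fixed point yields a point $x(t)=x^{\ast}+td+o(t)$ with $h_{j}(x(t))=0$ exactly for every $j$, while the strict estimates of the previous paragraph survive the $o(t)$ adjustment so that $f(x(t))<f(x^{\ast})$ and $g_{i}(x(t))\leq 0$. Brouwer's fixed-point theorem supplies the fixed point, but the self-map is continuous \emph{only because} $h=(h_{1},\dots,h_{n})$ is assumed continuous throughout a neighborhood of $x^{\ast}$; this is the precise role of the hypothesis and exactly the feature the counterexamples of \textsection 8.1 lack. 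The resulting feasible $x(t)$ with strictly smaller objective contradicts the local optimality of $x^{\ast}$, closing the argument.

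I expect the fixed-point construction to be the main obstacle. One must verify simultaneously that the correction map sends the small ball into itself (a quantitative estimate using the nondegeneracy of the $\nabla h_{j}(x^{\ast})$ together with the first-order vanishing of $h$ at $x^{\ast}$) and that it is continuous, and the latter leans entirely on the neighborhood continuity of the $h_{j}$ rather than on any continuous differentiability. That mere continuity plus pointwise Fréchet differentiability suffices to run the topological degree argument is precisely Halkin's key observation, and it is what distinguishes this sharper statement from the naive reading of the Karush-Kuhn-Tucker theorem criticized at the start of \textsection 8.1.
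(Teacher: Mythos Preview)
The paper does not actually prove this theorem: it merely states the result and attributes it to Halkin \cite{halkin1974implicit}, so there is no in-paper proof to compare your proposal against.

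That said, your proposal is a faithful outline of Halkin's original argument and is essentially sound. The reduction to linearly independent $\nabla h_{j}(x^{\ast})$, the use of a Motzkin-type alternative to extract the direction $d$, and above all the Brouwer fixed-point correction of the $o(t)$ defect in the equality constraints are exactly the ingredients of Halkin's proof, and you have correctly identified that the neighborhood continuity of $h$ is precisely what makes the correction map continuous (no $C^{1}$ assumption is needed). One small point worth tightening: the theorem as stated in the paper omits the clause that the multipliers are not all zero (compare Theorem~3.2.1, which imposes $\sum\overline{\lambda_{i}}=1$), so strictly speaking your contradiction hypothesis ``no nontrivial multiplier vector exists'' is proving a slightly stronger statement than what is written; you should remark that the intended conclusion includes nontriviality, else the result is vacuous. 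Otherwise the plan is correct and the anticipated obstacle (the self-mapping estimate for the Brouwer argument) is indeed the only place requiring care.
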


Finally, let us mention that in each previous example there is a function which is neither convex nor locally Lipschitz; hence it is not possible to apply a generalized Lagrange multiplier rule via different subdifferentials for the corresponding mathematical programming problems.

\newpage
\bibliography{ref}

\begin{thebibliography}{10}

\bibitem{lagrange}
J.L. Lagrange.
\newblock Théorie des fonctions analytiques.
\newblock 1797.

\bibitem{folland2013real}
Gerald~B Folland.
\newblock {\em Real analysis: modern techniques and their applications}.
\newblock John Wiley \& Sons, 2013.

\bibitem{rudin1991functional}
Walter Rudin.
\newblock {\em Functional Analysis, McGrawHill}.
\newblock 1991.

\bibitem{bertsekas2014constrained}
Dimitri~P Bertsekas.
\newblock {\em Constrained optimization and Lagrange multiplier methods}.
\newblock Academic press, 2014.

\bibitem{rockafellar1993lagrange}
R~Tyrrell Rockafellar.
\newblock Lagrange multipliers and optimality.
\newblock {\em SIAM review}, 35(2):183--238, 1993.

\bibitem{fukushima2011fundamentals}
M~Fukushima.
\newblock {\em Fundamentals of Nonlinear Optimization. Asakura Shoten, Tokyo;
  Chinese edition: translated by GH Lin}.
\newblock published by Science Press, Beijing, 2011.

\bibitem{hiriart2012fundamentals}
Jean-Baptiste Hiriart-Urruty and Claude Lemar{\'e}chal.
\newblock {\em Fundamentals of convex analysis}.
\newblock Springer Science \& Business Media, 2012.

\bibitem{mordukhovich2013easy}
Boris~S Mordukhovich and Nguyen~Mau Nam.
\newblock {\em An easy path to convex analysis and applications}, volume~6.
\newblock Morgan \& Claypool Publishers, 2013.

\bibitem{clarke2008nonsmooth}
Francis~H Clarke, Yuri~S Ledyaev, Ronald~J Stern, and Peter~R Wolenski.
\newblock {\em Nonsmooth analysis and control theory}, volume 178.
\newblock Springer Science \& Business Media, 2008.

\bibitem{jahn2010vector}
Johannes Jahn.
\newblock {\em Vector Optimization: Theory, Applications, and Extensions}.
\newblock Springer Science \& Business Media, 2010.

\bibitem{rockafellar1970convex}
R~Tyrrell Rockafellar.
\newblock {\em Convex analysis}, volume~28.
\newblock Princeton university press, 1970.

\bibitem{clarke1990optimization}
Frank~H Clarke.
\newblock {\em Optimization and nonsmooth analysis}, volume~5.
\newblock Siam, 1990.

\bibitem{luderer1991directional}
Bernd Luderer.
\newblock Directional derivative estimates for the optimal value function of a
  quasidifferentiable programming problem.
\newblock {\em Mathematical Programming}, 51(1-3):333--348, 1991.

\bibitem{gotz2000lagrange}
Arnulf G{\"o}tz and Johannes Jahn.
\newblock The lagrange multiplier rule in set-valued optimization.
\newblock {\em SIAM Journal on Optimization}, 10(2):331--344, 2000.

\bibitem{khan2016set}
Akhtar~A Khan, Christiane Tammer, and C~Zalinescu.
\newblock {\em Set-valued optimization}.
\newblock Springer, 2016.

\bibitem{fernandez1997}
Luis~A Fern{\'a}ndez.
\newblock On the limits of the lagrange multiplier rule.
\newblock {\em SIAM review}, 39(2):292--297, 1997.

\bibitem{aubin2009set}
Jean-Pierre Aubin and H{\'e}l{\`e}ne Frankowska.
\newblock {\em Set-valued analysis}.
\newblock Springer Science \& Business Media, 2009.

\bibitem{borwein2004techniques}
Jonathan~M Borwein and Qiji~J Zhu.
\newblock {\em Techniques of Variational Analysis}.
\newblock Springer, 2004.

\bibitem{demianov1995constructive}
Vladimir~Fedorovich Demianov and Aleksandr~Moiseevich Rubinov.
\newblock {\em Constructive nonsmooth analysis}, volume~7.
\newblock Peter Lang Pub Inc, 1995.

\bibitem{gao2000demyanov}
Y~Gao.
\newblock Demyanov difference of two sets and optimality conditions of lagrange
  multiplier type for constrained quasidifferentiable optimization.
\newblock {\em Journal of Optimization Theory and Applications},
  104(2):377--394, 2000.

\bibitem{GaoY}
Y~Gao.
\newblock {\em Nonsmooth Optimization, second edition (in Chinese)}.
\newblock published by Science Press, Beijing, 2018.

\bibitem{ioffe1993lagrange}
Alexander Ioffe.
\newblock A lagrange multiplier rule with small convex-valued subdifferentials
  for nonsmooth problems of mathematical programming involving equality and
  nonfunctional constraints.
\newblock {\em Mathematical programming}, 58(1-3):137--145, 1993.

\bibitem{halkin1974implicit}
Hubert Halkin.
\newblock Implicit functions and optimization problems without continuous
  differentiability of the data.
\newblock {\em SIAM Journal on Control}, 12(2):229--236, 1974.

\end{thebibliography}
\newpage
}
\end{spacing}

\end{document}